\newcommand{\cat}[1]{\ensuremath{\mathbf{#1}}}
\DeclareMathOperator{\AWstar}{\cat{AWstar}}
\DeclareMathOperator{\Cstar}{\cat{Cstar}}
\DeclareMathOperator{\Proj}{Proj}
\newcommand{\eqcomment}[1]{& \hbox{\scriptsize{(#1)}}}
\DeclareMathOperator{\card}{card}
\DeclareMathOperator{\supp}{supp}
\DeclareMathOperator{\Spec}{Spec}
\renewcommand{\d}{\ensuremath{\mathrm{d}}}
\newcommand{\dbar}{\ensuremath{\overline{\d}}}
\newcommand{\D}[1][]{\ensuremath{\mathrm{D}_{#1}}}
\newcommand{\supP}{\sup\nolimits^+}
\newcommand{\typeone}{\ensuremath{\text{I}}}
\newcommand{\typetwo}{\ensuremath{\text{II}}}
\newcommand{\typethree}{\ensuremath{\text{III}}}
\newcommand{\alg}[1][A]{\ensuremath{\mathnormal{#1}}}
\newcommand{\masa}[1][C]{\ensuremath{\mathnormal{#1}}}
\newcommand{\cover}[2][]{\ensuremath{\mathrm{c}_{#1}(#2)}}
\newcommand{\M}{\mathbb{M}}
\theoremstyle{plain}
\newtheorem{theorem}{Theorem}[section]
\newtheorem{corollary}[theorem]{Corollary}
\newtheorem{lemma}[theorem]{Lemma}
\newtheorem{proposition}[theorem]{Proposition}
\newtheorem*{comparisontheorem}{Comparison theorem}
\theoremstyle{definition}
\newtheorem{definition}[theorem]{Definition}
\newtheorem{remark}[theorem]{Remark}
\begin{document}

\title{Diagonalizing matrices over AW*-algebras}
\author{Chris Heunen}
\address{Department of Computer Science, University of Oxford, Wolfson
  Building, Parks Road, OX1 3QD, Oxford, UK}
\email{heunen@cs.ox.ac.uk}
\thanks{Support by US Office of Naval
Research grant N000141010357 is gratefully acknowledged.}
\author{Manuel L. Reyes}
\address{Department of Mathematics\\ Bowdoin College\\
  8600 College Station\\ Brunswick, ME 04011\\ USA}
\email{reyes@bowdoin.edu}
\date{\today}
\subjclass[2010]{46L05, 46L10}

\begin{abstract}
  Every commuting set of normal matrices with entries in an AW*-algebra can
  be simultaneously diagonalized.
  To establish this, a dimension theory for properly infinite
  projections in AW*-algebras is developed. 
  As a consequence, passing to matrix rings is a functor on the
  category of AW*-algebras.  
\end{abstract}

\maketitle

\section{Introduction}
\label{sec:introduction}

Diagonalization is a fundamental operation on matrices that can simplify reasoning
about normal matrices. Every commuting set of normal $n \times n$ complex matrices
can be simultaneously diagonalized.
If $\alg$ is a unital C*-algebra, it is well known that the ring $\M_n(\alg)$ of
$n \times n$ matrices with entries in $\alg$ is again a unital C*-algebra. 
The question naturally arises: over which C*-algebras can any
commuting set of normal $n \times n$ matrices be diagonalized?
To be precise, we say that $\alg$ is \emph{simultaneously $n$-diagonalizable} 
if, for any commuting set $\alg[X]$ of normal elements of $\M_n(\alg)$,
there is a unitary $u$ in $\M_n(\alg)$ making $uxu^*$ diagonal for any
$x \in \alg[X]$. 
(Note that this property is stronger than the ability to
  diagonalize individual normal $n \times n$ matrices.)
We prove that every AW*-algebra is simultaneously
$n$-diagonalizable for any positive integer $n$.

This question has quite some history.
Deckard and Pearcy first established in~\cite{DeckardPearcy} that every
individual normal matrix is diagonalizable in $\M_n(\alg)$ for a commutative
AW*-algebra $\alg$ in 1962.
In~1977, Halpern showed that a single normal element of a properly infinite von Neumann algebra
  is diagonalizable (though this seems not to have been widely
  noticed~\cite[Lemma~3.2]{halpern:essential}). Since then,
the problem of diagonalizing an individual matrix or operator has been studied
in several contexts; for a brief survey and further references
see~\cite[Chapter~6]{manuilovtroitsky:hilbert}.
The question of whether an individual self-adjoint matrix over an
AW*-algebras is diagonalizable was raised in~\cite{frankmanuilov:diagonalizing}.
Simultaneous diagonalization of matrices over noncommutative operator algebras was initiated
by Kadison in 1982 (\cite{kadison:diagonalization}, see also
~\cite[Volume~IV, Exercises~6.9.18--6.9.35]{kadisonringrose}). 
He proved that countably decomposable von Neumann algebras are
simultaneously $n$-diagonalizable, relying on their decomposition into types (see also~\cite{kaftal:types}). 
In 1984, Grove and Pedersen showed that for any $n \geq 2$, a commutative simultaneously
$n$-diagonalizable C*-algebra is an AW*-algebra, and they asked whether Kadison's techniques
extend to noncommutative AW*-algebras~\cite[6.7]{grovepedersen:diagonalizing}.
We precisely accomplish this task. 

The bulk of the new results here concerns properly infinite AW*-algebras.
In that case, our attack on the question requires a dimension theory,
reducing equivalence of properly infinite projections to a problem
about cardinal-valued dimensions.
Kadison sidestepped such size issues by restricting to countably decomposable von
Neumann algebras. By proving everything in full generality, our results
are even new in the case of properly infinite von Neumann algebras.
A dimension theory for AW*-algebras was given by Feldman already in
1956~\cite{feldman:dimension}. Independently, \v{C}ilin studied a similar
notion of dimension in 1980.\footnote{Apparently it was published
in~\cite{chilin:equivalence}, but we did not manage to locate
that paper; instead we re-engineered, and generalized, the proofs of
Theorems~\ref{thm:cilin1} and~\ref{thm:cilin2} below from Proposition
3.6.6 in \v{C}ilin's thesis. We thank S.\ Solovjovs for obtaining that
thesis, and A.\ Akhvlediani for translating that proposition.} 
Tomiyama greatly extended Feldman's results in the case of von Neumann algebras
in 1958~\cite{tomiyama:dimension}; for a recent, and very general, account, see~\cite{goodearlwehrung}. 
However, these studies into dimension theory do not interface 
seamlessly with Kadison's diagonalization results. Therefore, either
the dimension theory or Kadison's methods have to be adapted; we chose
the former. This requires a more intricate analysis of the dimension
function. A crucial step here is a decomposition into so-called
equidimensional projections. As a side note, we must mention that all
these results depend heavily on the axiom of choice, and therefore are
problematic in constructive settings. 

Our original interest in diagonalization over AW*-algebras arose from the
following problem.
Let $\Cstar$ denote the category whose objects are unital C*-algebras
and whose morphisms are unital $*$-homomorphisms. Let
$\AWstar$ denote the subcategory of $\Cstar$ whose objects
are the AW*-algebras and whose morphisms are those $*$-homomorphisms
that preserve suprema of arbitrary sets of projections.
Applying $*$-homomorphisms entrywise makes $\M_n$ into a functor $\Cstar
\to \Cstar$. On
objects, this functor sends AW*-algebras to AW*-algebras, by a
combination of results due to Kaplansky and Berberian~\cite{berberian:matrices}.
So it is natural to ask whether $\M_n$ restricts to a functor
$\AWstar \to \AWstar$. As an application of the
diagonalization theorem, we prove that this is indeed so.

As is clear from the historical introduction above, there is a fair
amount of (routine) generalization from von Neumann algebras to
AW*-algebras,\footnote{See also Remark~\ref{rem:baer}.} as well as piecing 
together fragmented results from the literature. To make the story
reasonably self-contained, we include all such results in a uniform
way with explicit proofs, relying upon~\cite{berberian} as our standard reference
for the theory of AW*-algebras. The paper is structured as follows. 
After discussing preliminaries in Section~\ref{sec:preliminaries}, and the
routine generalizations of Kadison's results to AW*-algebras of
finite type in Section~\ref{sec:comparison:finite}, 
the next few sections launch into the proof of simultaneous $n$-diagonalizability
of AW*-algebras. 
Section~\ref{sec:dimension} introduces
the dimension theory, which is continued in
Section~\ref{sec:equidimensional}, that concerns equidimensional
projections. The dimension theory is then put to use in
Section~\ref{sec:comparison:infinite} to generalize Kadison's results to
AW*-algebras of infinite type. Then Section~\ref{sec:diagonalization} 
gathers all the ingredients to prove that AW*-algebras are
simultaneously $n$-diagonalizable. 
Section~\ref{sec:functor} ends the paper with the
functoriality of taking matrix rings of AW*-algebras. Finally,
Appendix~\ref{app:supachieved} contains additional technical results about
dimensions that would disrupt the main development.
Some open questions are mentioned at the end of Sections~\ref{sec:comparison:infinite}
and~\ref{sec:diagonalization}.

\section{Preliminaries on AW*-algebras}\label{sec:preliminaries}

An \emph{AW*-algebra} is a C*-algebra in which the (right, and
hence left) annihilator of any subset is generated by a single projection.
This section recalls some general properties of these algebras, which were
introduced by Kaplansky as a generalization of von Neumann algebras, preserving
the purely algebraic content of their theory~\cite{kaplansky:awstar}.
For example, the Gelfand spectrum of a commutative AW*-algebra is a
Stonean space (\textit{i.e.}~a topological space in which the closure
of an open set is again open, \textit{i.e.}~the Stone space of a complete
Boolean algebra). 
To compare: the Gelfand spectrum of a commutative von Neumann algebra
additionally satisfies a measure-theoretic property.

\subsection*{Maximal abelian subalgebras}
 
We will use the abbreviated phrase \emph{maximal abelian subalgebra} in place
of ``maximal abelian $*$-subalgebra'' or ``maximal abelian self-adjoint
subalgebra''. The notion of AW*-subalgebra is slightly subtle, but
maximal abelian subalgebras are automatically AW*-subalgebras.

\subsection*{Projections}

The main characteristic of AW*-algebras is that  to a great extent
they are algebraically determined by their projections.
For example, any AW*-algebra $\alg$ is the closed linear span of its
projections $\Proj(\alg)$.
Projections are partially ordered by $e \leq f$ if and only if $e = ef (=
fe)$, and $\Proj(A)$ is a complete 
lattice. In the special case that $\{e_i\}$ is an orthogonal set of
projections in $A$, we denote its supremum by $\sum e_i$.
Projections $e,f \in \Proj(\alg)$ are equivalent when
$e=vv^*$ and $f=v^*v$ for some $v \in \alg$. When the algebra in which
they are equivalent must be emphasized, we write $e \sim_{\alg} f$,
and similarly for the derived notions $e \precsim_{\alg} f$
(meaning $e \sim e' \leq f$ for some projection $e'$)
and $e \prec_{\alg} f$ (meaning $e
\precsim f$ but $e \not\sim f$; we also allow $0 \prec 0$). 
Equivalence is additive: if $\{e_i\}$ and $\{f_i\}$ are orthogonal families of
projections satisfying $e_i \sim f_i$, then $\sum e_i \sim \sum f_i$.
Equivalence also satisfies Schr{\"o}der--Bernstein: if $e \precsim f$
and $e \succsim f$, then $e \sim f$.
It is a simple fact that if $z, e, f \in \Proj(\alg )$ are such that
$z$ is central and $e \sim f$, then $ze \sim
zf$. 

\begin{comparisontheorem}
  Let $e$ and $f$ be projections in an AW*-algebra. There are 
  orthogonal central projections $x,y,z$ satisfying $x+y+z=1$ and
  \[
    xe \prec xf, \qquad ye \sim yf, \qquad ze \succ zf.
  \]
\end{comparisontheorem}
\begin{proof}
  Zorn's lemma produces a maximal orthogonal family $\{y_i\}$ of
  nonzero central projections satisfying $y_ie \sim y_if$. Setting $y=\sum
  y_i$, then $ye \sim yf$. In fact, this $y$ is the unique largest
  central projection with that property: if $we \sim wf$ for $w \in
  \Proj(Z(\alg))$ then $(1-y)we \sim (1-y)wf$, but $(1-y)w$ is
  orthogonal to $y$ and must hence be zero by maximality of $\{y_i\}$.

  There is a central projection $w$ such that $we \precsim wf$
  and $(1-w)e \succsim (1-w)f$~\cite[Corollary~14.1]{berberian}. Set
  $x=w(1-y)$ and $z=(1-w)(1-y)$. Then $x$, $y$ and $z$ are orthogonal
  and sum to 1. Clearly also $xe \precsim xf$, and because $xe \sim xf$
  violates maximality of $y$ as above, in fact $xe \prec xf$. Similarly
  $ze \succ zf$.
\end{proof}

\noindent
In fact, the $x$, $y$ and $z$ in the comparison theorem are unique, but
we do not need this fact.

\subsection*{Passing to corner algebras}

We will frequently use properties of corners of an AW*-algebra $\alg$, which we
now list. For any $e \in \Proj(\alg)$, the corner algebra $e\alg e$ and the centre
$Z(\alg)$ are again AW*-algebras. 
Many relevant properties are preserved by passing to corners. 
For example, the following lemma shows that equivalence and maximality of abelian
subalgebras are also well behaved when passing to corners.

\begin{lemma}\label{lem:corners}
  Let $e$ be a projection in an AW*-algebra $\alg$.
  \begin{enumerate}[(a)]
  \item $\Proj(e\alg e) = \{ p \in \Proj(\alg ) \mid p \leq e \}$;
  \item For all $p,q \in \Proj(e\alg e)$ we have $p \sim_{\alg}  q$ if and only if $p \sim_{e\alg e} q$.
  \item   If $\masa$ is a maximal abelian subalgebra of $\alg$, and $e
  \in \Proj(\masa)$, then $e\masa$ is a maximal abelian subalgebra of $e\alg e$.
  \end{enumerate}
\end{lemma}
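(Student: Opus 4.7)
My plan is to treat the three parts in order, since (a) is immediate, (b) reduces to a partial-isometry calculation, and (c) is a short commutation argument.

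For part (a), I would argue both inclusions from the definition. If $p \in \Proj(e\alg e)$, then $p = epe$, hence $ep = p = pe$, i.e.\ $p \leq e$. Conversely, if $p \in \Proj(\alg)$ satisfies $p \leq e$, then $p = ep = pe = epe \in e\alg e$.

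For part (b), the implication $p \sim_{e\alg e} q \Longrightarrow p \sim_{\alg} q$ is immediate since any witnessing partial isometry $v \in e\alg e$ also lies in $\alg$. The nontrivial direction is the converse: given $v \in \alg$ with $vv^* = p$ and $v^*v = q$, I want to replace $v$ by something in $e\alg e$ with the same properties. The key observation is that since $vv^* = p$ is a projection, $v$ is a partial isometry and $v = vv^*v = pv = vq$. Because $p \leq e$ and $q \leq e$, we get $ev = epv = pv = v$ and $ve = vqe = vq = v$, so $v = eve \in e\alg e$. Hence the same $v$ witnesses $p \sim_{e\alg e} q$. I do not expect any real obstacle here; the only thing to get right is the identity $v = pv = vq$, which holds precisely because $vv^*$ is a projection.

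For part (c), I first note that $e\masa$ is a $*$-subalgebra of $e\alg e$ (it is closed under multiplication because $e$ is idempotent and central in $\masa$) and it is commutative because $\masa$ is. To see it is maximal abelian in $e\alg e$, let $x \in e\alg e$ commute with every element of $e\masa$, so that $x = exe$. For any $c \in \masa$, using $e \in \masa$ gives $ec = ce$, so
\[
xc = (xe)c = x(ec) = (ec)x = c(ex) = cx,
\]
where the middle equality uses that $ec \in e\masa$ commutes with $x$. Thus $x$ commutes with all of $\masa$, and maximality of $\masa$ in $\alg$ forces $x \in \masa$. Combined with $x = ex$, this yields $x \in e\masa$. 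The main thing to be careful of is that $e \in \masa$ so that $ec = ce$ can be used freely; this is given by hypothesis. No step in this lemma seems to require the full strength of the AW*-structure beyond the elementary properties of projections and partial isometries, so I anticipate the proof to be short and computational.
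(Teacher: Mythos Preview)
Your proof is correct and, for parts~(a) and~(b), essentially identical to the paper's: the same partial-isometry identity $v = vv^*v$ is used to push the witnessing element into $e\alg e$.

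For part~(c) your route is slightly different and in fact cleaner. The paper only checks that $eae$ commutes with every \emph{projection} $z \in \masa$, via the identity $eaez = eaeze + eaez(1-e) = eaeze = zeae$; it then implicitly uses that a maximal abelian subalgebra of an AW*-algebra is spanned by its projections to conclude that $eae$ commutes with all of $\masa$. Your argument bypasses this by working directly with an arbitrary $c \in \masa$: since $e \in \masa$ gives $ec = ce$, the chain $xc = x(ec) = (ec)x = cx$ handles all of $\masa$ at once. This avoids any appeal to the AW*-structure of $\masa$, so your version of~(c) actually works in any unital $*$-algebra.
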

\begin{proof}
  By definition $p \in e\alg e$ if and only if $p=eae$ for some $a \in
  \alg$. This is equivalent to $p=ep=pe$, that is, to $p \leq e$,
  establishing (a).
  For the non-trivial direction of (b), suppose $p
  \sim_{\alg}  q$, say $v^*v=p$ and $vv^*=q$. Since we may assume that
  $v \in \alg$ is a partial isometry~\cite[Proposition~1.6]{berberian},
  $v = vv^*vv^*v = qvp \in q\alg p \subseteq e\alg e$,
  so $p \sim_{e\alg e} q$.

  For (c), observe that for any projection $z$ in $\masa$ one has
  $z(1-e) \leq 1-e$ in $\alg$, and so 
  $ez(1-e)=0$. Similarly $(1-e)ze=0$.  If $eae \in e\alg e$ commutes
  with $e\masa$, then 
  \[  
    eaez = eaeze+eaez(1-e) = eaeze = ezeae + (1-e)zeae = zeae.
  \]
  Hence $eae$ commutes with $\masa$. So $eae \in \masa$ by maximality of $\masa$.
  Therefore $eae \in e\masa$, and $e\masa$ is maximal.
\end{proof}

\subsection*{Central covers}

We write $\cover{e}$ for the least central projection above $e$, also
called its central cover. If the AW*-algebra $\alg$ must be emphasized, we write
$\cover[\alg]{e}$ instead. 
%
%
%
Central covers and centres are also preserved by passing to corners.

\begin{lemma}\label{lem:centralcoverincorners}
   If $f \leq e$ are projections in an AW*-algebra $\alg$, then
   $\cover[e\alg e]{f} = \cover[\alg ]{f} e$. Hence $Z(e\alg e) = eZ(\alg )$.
\end{lemma}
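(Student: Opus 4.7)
The plan is to prove the central-cover identity $\cover[e\alg e]{f} = \cover[\alg]{f}\cdot e$ directly, then derive $Z(e\alg e) = eZ(\alg)$ as an easy corollary. Set $c = \cover[\alg]{f}$. Since $c \in Z(\alg)$ commutes with $e$, $ce$ is a projection; the computation $(ce)(eae) = ceae = eace = (eae)(ce)$ shows $ce \in Z(e\alg e)$, and $cf = f$ together with $ef = f$ gives $ce \geq f$ in $e\alg e$. This yields the easy inequality $\cover[e\alg e]{f} \leq ce$.

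For the reverse inequality, set $p = \cover[e\alg e]{f}$, so it suffices to show $c(e - p) = 0$. The key step is to verify that no nonzero subprojection of $f$ is $\alg$-equivalent to any subprojection of $e - p$. Suppose for contradiction $f' \leq f$ and $q' \leq e - p$ are nonzero with $f' \sim_\alg q'$; by Lemma~\ref{lem:corners}(b) the implementing partial isometry $v$ may be taken in $e\alg e$, with $v^*v = f'$ and $vv^* = q'$. Centrality of $p$ in $e\alg e$ then gives $pv = vp$; but $pv = p(vv^*)v = pq'v = 0$ because $pq' = 0$, whereas $vp = v(v^*v)p = vf'p = vf' = v$ because $f' \leq f \leq p$. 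Hence $v = 0$, contradicting $v^*v = f' \neq 0$.

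Polar decomposition shows the absence of such equivalent subprojections is equivalent to $f\alg(e-p) = 0$, which by the standard characterization of orthogonal central covers~\cite{berberian} forces $c\cdot\cover[\alg]{e - p} = 0$, and a fortiori $c(e - p) = 0$. Therefore $ce = cp + c(e - p) = cp \leq p$, completing the proof that $\cover[e\alg e]{f} = ce$.

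For the consequence $Z(e\alg e) = eZ(\alg)$, the inclusion $eZ(\alg) \subseteq Z(e\alg e)$ is the centrality check already performed. For the opposite inclusion, $Z(e\alg e)$ is a commutative AW*-algebra and hence is the closed linear span of its projections; every projection $p \in Z(e\alg e)$ satisfies $\cover[e\alg e]{p} = p$, so the identity just proved gives $p = \cover[\alg]{p}\cdot e \in eZ(\alg)$. The main obstacle in this plan is the contradiction argument of the second paragraph: it is the only place where centrality of $p$ inside the corner is used in an essential way, and it depends on Lemma~\ref{lem:corners}(b) to transport the partial isometry from $\alg$ down to $e\alg e$.
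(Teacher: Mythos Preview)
Your argument is correct. The paper's own ``proof'' of this lemma is simply a citation to Berberian (Proposition~6.4 and Corollary~6.1), so there is no real approach to compare against; you have supplied a self-contained argument where the paper defers to the reference. The route you take---show $ce$ is central in the corner and dominates $f$ for the easy inequality, then for the hard inequality use centrality of $p=\cover[e\alg e]{f}$ in $e\alg e$ together with Lemma~\ref{lem:corners}(b) to rule out any nonzero partial isometry from a piece of $f$ to a piece of $e-p$, hence $f\alg(e-p)=0$, hence $\cover[\alg]{f}\,\cover[\alg]{e-p}=0$---is standard and close in spirit to Berberian's development (his Proposition~6.3 is exactly the $p\alg q=0 \Leftrightarrow \cover{p}\cover{q}=0$ fact you invoke). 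The one place to be slightly careful is your appeal to polar decomposition: this is valid in AW*-algebras (\cite[\S21]{berberian}), so the step from ``no equivalent nonzero subprojections'' to $f\alg(e-p)=0$ goes through. Your derivation of $Z(e\alg e)=eZ(\alg)$ from the central-cover identity, by reducing to projections in the commutative AW*-algebra $Z(e\alg e)$, is also fine. In short, what you gain over the paper is a transparent, self-contained proof; what you lose is brevity.
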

\begin{proof}
  See Proposition~6.4 and Corollary~6.1 of~\cite{berberian}.
%
\end{proof}

We record two results of Kadison's on central covers, adapted to AW*-algebras.

\begin{lemma}\label{lem:KR6.9.18}
  If $\masa$ is a maximal abelian subalgebra of an AW*-algebra $\alg$ and $\masa 
  \neq \alg$, then there are nonzero orthogonal projections $e,f$ in $\masa$
  with $\cover{e}=\cover{f}$ and $e \precsim f$.
\end{lemma}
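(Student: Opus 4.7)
The plan is to exploit the fact that $\masa \neq \alg$ to locate an ``off-diagonal'' nonzero element, and then extract equivalent orthogonal projections lying in $\masa$. Since $\masa$ is maximal abelian and is itself an AW*-algebra (by the preliminaries), it equals the closed linear span of its projections. Hence a projection $p \in \alg$ commutes with $\masa$ if and only if it commutes with every projection of $\masa$, if and only if $p \in \masa$ (by maximality). Taking any self-adjoint element of $\alg \setminus \masa$ and applying the spectral theorem (together with norm-closure of $\masa$) yields a projection $p \in \alg \setminus \masa$, and then a projection $q \in \masa$ with $pq \neq qp$. The element $v := (1-q)pq$ is nonzero, since $(1-q)pq = 0$ would give $pq = qpq = qp$.

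Next, the range projections $e_0$ of $vv^*$ and $f_0$ of $v^*v$ are nonzero projections in $\alg$ with $e_0 \leq 1-q$, $f_0 \leq q$, and $e_0 \sim_{\alg} f_0$ via the polar decomposition of $v$. Setting $z_0 := \cover{e_0} = \cover{f_0}$ gives a nonzero central projection satisfying $z_0 \leq \cover{q}\cover{1-q}$. I then apply the comparison theorem to $q$ and $1-q$ in $\alg$ and multiply the resulting central decomposition by $z_0$ to obtain central projections $w_1,w_2,w_3$ summing to $z_0$ with
\[
w_1 q \prec w_1(1-q), \qquad w_2 q \sim w_2(1-q), \qquad w_3 q \succ w_3(1-q).
\]
Since $z_0 \neq 0$, at least one $w_i$ is nonzero; choose such an $i$ and set $\{e,f\} := \{w_i q,\, w_i(1-q)\}$ in the order making $e \precsim f$. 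Both lie in $\masa$ (as $w_i \in Z(\alg) \subseteq \masa$ and $q \in \masa$) and are orthogonal. Using the identity $\cover{zr} = z\cover{r}$ for central $z$ and projection $r$: if $w_i q = 0$ then the central $w_i$ would lie below $1 - \cover{q}$, contradicting $w_i \leq z_0 \leq \cover{q}$, so $w_i q \neq 0$; similarly $w_i(1-q) \neq 0$. The same identity computes $\cover{w_i q} = w_i \cover{q} = w_i = w_i \cover{1-q} = \cover{w_i(1-q)}$.

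The main obstacle I anticipate is securing equality of central covers in the strictly-$\prec$ or strictly-$\succ$ case, since the comparison theorem alone permits $\cover{w_1 q}$ and $\cover{w_1(1-q)}$ to differ. The device of restricting to the central support $z_0$ of the off-diagonal equivalent pair $(e_0,f_0)$ is precisely what forces both $\cover{q}$ and $\cover{1-q}$ to absorb $z_0$, and hence what pins the two central covers together.
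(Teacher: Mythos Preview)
Your proof is correct and follows essentially the same strategy as the paper: locate a projection $q \in \masa$ with $\cover{q}\cover{1-q} \neq 0$, cut down by a nonzero central projection below $\cover{q}\cover{1-q}$, and apply comparison to the complementary pair $q,\,1-q$. The one real difference is how the non-central projection in $\masa$ is obtained. You construct it explicitly: produce a projection $p \in \alg \setminus \masa$ via the spectral theorem, find $q \in \masa$ not commuting with $p$, and use the off-diagonal element $v=(1-q)pq$ and its range projections to exhibit a nonzero central $z_0 \leq \cover{q}\cover{1-q}$. The paper instead argues by contrapositive: if $\cover{p}\cover{1-p}=0$ for every $p \in \Proj(\masa)$, then each such $p$ equals its own central cover, so $\Proj(\masa)\subseteq Z(\alg)$, whence $\masa = Z(\alg)$ and maximality forces $\masa = \alg$. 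This is shorter and avoids the spectral-theorem and polar-decomposition detour, while your route is more constructive and makes the ``off-diagonal'' intuition visible. One small slip worth noting: after multiplying the comparison-theorem decomposition by $z_0$, the strict relations $\prec$ and $\succ$ need not survive (only $\precsim$ and $\succsim$ are guaranteed), but since you only ever use $\precsim$ in the sequel, the argument is unaffected.
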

\begin{proof}
  If $p \in \Proj(\masa )$ satisfies $\cover{p}\cover{1-p}=0$, then $p=\cover{p}$,
  because
  \[
     p \leq \cover{p} \leq 1-\cover{1-p} \leq 1-(1-p) = p.
  \]
  So either each projection in $\masa$ is central in $\alg$, or
  $q=\cover{p}\cover{1-p} >0$ for some projection $p$ in $\masa$. The former case
  is ruled out, because then $Z(\alg)=\masa$, and hence
  $\masa =\alg$ by maximality. Now $qp$ and $q(1-p)$ are nonzero and
  $\cover{qp}=\cover{q(1-p)}$. By the comparison
  theorem, there is a nonzero central
  projection $z \leq q$ with either $zp \precsim z(1-p)$ or $z(1-p)
  \precsim zp$. In any event, one of $zp$ and $z(1-p)$ serves as $e$
  and the other as $f$, when $\alg$ is not abelian.
\end{proof}

\begin{lemma}\label{lem:KR6.9.19}
  If $\alg$ is an AW*-algebra without abelian central summands, then any
  maximal abelian subalgebra $\masa$ contains a projection $e$ with
  $\cover{e}=1=\cover{1-e}$ and $e \precsim 1-e$.
\end{lemma}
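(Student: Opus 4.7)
The plan is to combine Lemma~\ref{lem:KR6.9.18} with a Zorn's lemma argument that assembles $e$ from contributions across orthogonal central summands. The crucial subclaim is: for every nonzero central projection $w$ of $\alg$, there exist a nonzero central projection $z \leq w$ and a projection $e' \in z\masa$ with
\[
  \cover[z\alg]{e'} = z = \cover[z\alg]{z-e'}, \qquad e' \precsim z-e'.
\]

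To prove the subclaim, note that the hypothesis on $\alg$ is inherited by $w\alg$, and by Lemma~\ref{lem:corners}(c) the subalgebra $w\masa$ is maximal abelian in $w\alg$. Since $w\alg$ is non-abelian, $w\masa \neq w\alg$, so Lemma~\ref{lem:KR6.9.18} applied inside $w\alg$ produces nonzero orthogonal projections $e',f' \in w\masa$ with $\cover[w\alg]{e'} = \cover[w\alg]{f'} =: z$ and $e' \precsim_{w\alg} f'$. Lemma~\ref{lem:centralcoverincorners} identifies $z = w \cdot \cover[\alg]{e'}$, which is central in $\alg$ and dominated by $\cover[\alg]{e'}$; hence $\cover[z\alg]{e'} = z \cdot \cover[\alg]{e'} = z$. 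The same reasoning applied to $f' \leq z - e'$ yields $\cover[z\alg]{z-e'} \geq \cover[z\alg]{f'} = z$, forcing equality. Finally, $e' \precsim_{w\alg} f' \leq z - e'$ gives $e' \precsim z - e'$ in $\alg$, and hence in $z\alg$ by Lemma~\ref{lem:corners}(b).

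With the subclaim in hand, apply Zorn's lemma to the collection of indexed families $\{(z_i, e_i)\}$, where the $z_i$ are pairwise orthogonal nonzero central projections of $\alg$ and each $e_i \in z_i\masa$ witnesses the subclaim for $z_i$, ordered by inclusion. A maximal such family must satisfy $\sum z_i = 1$, since otherwise the subclaim applied to $1 - \sum z_i$ would strictly extend it. Setting $e := \sum e_i$ gives a projection in $\masa$. Additivity of $\precsim$ over orthogonal decompositions then yields $e = \sum e_i \precsim \sum (z_i - e_i) = 1 - e$, while the pointwise bounds $\cover[\alg]{e_i} \geq z_i$ and $\cover[\alg]{z_i - e_i} \geq z_i$ combine through monotonicity of $\cover{\cdot}$ to give $\cover[\alg]{e} \geq \sum z_i = 1$ and likewise $\cover[\alg]{1-e} = 1$.

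The main technical subtlety is the bookkeeping of central covers under passage from $\alg$ to $w\alg$ to $z\alg$, which is handled uniformly by Lemma~\ref{lem:centralcoverincorners}. Once the subclaim is isolated in this clean form, the Zorn's lemma assembly is entirely routine.
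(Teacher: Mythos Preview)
Your proof is correct and follows essentially the same strategy as the paper's: apply Lemma~\ref{lem:KR6.9.18} locally and use Zorn's lemma over orthogonal central pieces to assemble the global projection $e$. The only differences are organizational: you isolate the local step as a separate subclaim, track the central projections $z_i$ explicitly as part of the data (the paper instead takes the $\cover{e_i}$ to be orthogonal), and verify $\cover{1-e}=1$ directly, whereas the paper leaves that implicit (it follows immediately from $\cover{e}=1$ and $e \precsim 1-e$).
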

\begin{proof}
  Let $\{e_i\}$ be a family of nonzero projections in $\masa$ maximal with
  respect to the properties that $\{\cover{e_i}\}$ is orthogonal and $e_i
  \precsim 1-e_i$ for each $i$. From Lemma~\ref{lem:KR6.9.18}, $\masa$
  contains nonzero orthogonal projections $e_0 \precsim f_0$ ($\leq
  1-e_0$). Thus the family $\{e_i\}$ is not empty. Set $e=\sum_i
  e_i$. Then $\cover{e}=\sum \cover{e_i}$. If $z=\cover{e}<1$, then $(1-z) \alg$ is a
  nonabelian AW*-algebra (since $\alg$ is assumed to have no central 
  summands that are abelian) and $(1-z) \masa$ is a maximal abelian
  subalgebra. Again from Lemma~\ref{lem:KR6.9.18}, there is a nonzero
  projection $e_1$ in $(1-z) \masa$ with $e_1 \precsim
  (1-z)-e_1$. Adjoining $e_1$ to $\{e_i\}$ contradicts maximality of
  that family. Thus $z=1$. Since
  \[
    e_i = \cover{e_i} e_1 \precsim \cover{e_i} (1-e_i) = \cover{e_i}-e_i
  \]
  for each $i$, we have
  $
    e = \sum_i e_i \precsim \sum_i (\cover{e_i}-e_i) = 1-e
  $,
  and $\cover{e}=z=1$.
\end{proof}

\subsection*{Properly infinite projections}

A projection $e$ is \emph{finite} when $e \sim f \leq e$ implies
$e=f$; otherwise it is \emph{infinite}. 
It follows from the comparison theorem that if $\cover{e} \leq
\cover{f}$ for a finite projection $e$ and an infinite projection $f$, then
$ze \prec zf$ for nonzero central projection $z \leq \cover{f}$.
Following standard
terminology, an AW*-algebra $\alg$ is \emph{properly infinite} if every
nonzero central projection of $\alg$ is infinite.  

\begin{lemma}\label{lem:properlyinfinite}
  Let $\alg$ be an AW*-algebra, and let $e$ be a nonzero projection in $\alg$. The following are equivalent:
  \begin{enumerate}[(a)]
  \item there exist projections $e_1 \sim e_2 \sim e$ in $\alg$ such that
    $e = e_1 + e_2$;
  \item there exists an infinite orthogonal set of projections $\{ e_i \}$ in $\alg$
    such that $e_i \sim e$ for all $i$ and $e = \sum e_i$;
  \item the AW*-algebra $e\alg e$ is properly infinite;
  \item if $z \in \alg$ is a central projection, then $ze$ is either
    zero or infinite.
  \end{enumerate}
\end{lemma}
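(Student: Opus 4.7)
The plan is to prove (a)$\Leftrightarrow$(b) and (a)$\Rightarrow$(c)$\Leftrightarrow$(d)$\Rightarrow$(a), the last implication being the only nontrivial one.

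For (b)$\Rightarrow$(a), I would partition the infinite indexing set into two infinite subsets and use bijections plus additivity of $\sim$ to show each partial sum is equivalent to $e$. For (a)$\Rightarrow$(b), from $e = e_1 + e_2$ with $e_1 \sim e_2 \sim e$, pick a partial isometry $v$ with $v^*v = e$ and $vv^* = e_1$, and iterate the $*$-isomorphism $\phi(x) = vxv^*$ to halve the ``leftmost branch'': $e_1 = \phi(e_1)+\phi(e_2)$, then $\phi(e_1) = \phi^2(e_1)+\phi^2(e_2)$, and so on. This produces an orthogonal sequence $(\phi^k(e_2))_{k\ge 0}$ of projections each equivalent to $e$, with sum $S$ and residue $p = e - S = \bigwedge_n \phi^n(e_1)$; since $e_2 \sim e$ and $e_2 \le e_2+p \le e$, Schr\"oder--Bernstein yields $e_2+p \sim e$, and replacing $e_2$ by $e_2+p$ in the sequence gives the required family summing to $e$. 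For (a)$\Rightarrow$(c), Lemma~\ref{lem:centralcoverincorners} identifies $Z(e\alg e) = eZ(\alg)$, so every nonzero central projection of $e\alg e$ is of the form $ze$ for a central $z\in\alg$; multiplying $e = e_1+e_2$ by $z$ gives $ze = ze_1+ze_2$ with $ze_i \sim ze$, and since $ze_2 \sim ze$ is nonzero, $ze_1 < ze$ strictly, so $ze$ is infinite. The equivalence (c)$\Leftrightarrow$(d) is immediate from the same identification of central projections together with Lemma~\ref{lem:corners}(b).

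The substantial implication is (d)$\Rightarrow$(a), the halving theorem for properly infinite projections. Taking $z = 1$ in~(d), $e$ is infinite, so I would pick $f \sim e$ with $f < e$ and set $g = e - f > 0$. The comparison theorem applied to $f,g$ gives orthogonal central $x+y+z = 1$ with $xg \prec xf$, $yg \sim yf$, and $zg \succ zf$. The $z$-case is ruled out because $g \le e \sim f$ forces $zg \precsim zf$, contradicting $zg \succ zf$ by Schr\"oder--Bernstein. On the $y$-summand, $ye = yf+yg$ is already the halving. For the $x$-summand, iterating conjugation by a partial isometry $v$ with $v^*v = xe$ and $vv^* = xf$ produces decompositions $xe = f_n + g_n + g_{n-1}+\cdots+g_0$ where $f_n \sim xe$ and each $g_k \sim xg$, yielding an infinite orthogonal family $(g_k)_{k\ge 0}$ with sum $G$ and residue $p = \bigwedge_n f_n = xe - G$; partitioning the family into two infinite halves and absorbing $p$ via Schr\"oder--Bernstein completes the halving of $xe$, which together with the $y$-case yields (a).

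The hard part will be the absorption step in the $x$-subcase: showing that the two halves (say $p + G_{\mathrm{even}}$ and $G_{\mathrm{odd}}$) are each equivalent to $xe$ rather than merely subequivalent. The delicate issue is that $p$ is orthogonal to the entire family $(g_k)$ yet may be nonzero, and one must combine the strict inequality $xg \prec xf$ with the proper infiniteness of $xe\alg xe$ to conclude $G \sim xe$ (via Schr\"oder--Bernstein), allowing $p$ to be merged into one half without disturbing the equivalence with $xe$. For the technical details of this step I would follow the classical treatment in \cite[Section~17]{berberian}.
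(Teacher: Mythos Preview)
Your proposal is correct; both you and the paper ultimately rest on the halving theorem in \cite[Section~17]{berberian}, but via different implication cycles. The paper runs $(c)\Rightarrow(b)\Rightarrow(a)\Rightarrow(d)\Rightarrow(c)$: it cites \cite[Theorem~17.1]{berberian} wholesale for $(c)\Rightarrow(b)\Rightarrow(a)$ and then dispatches $(a)\Rightarrow(d)$ and $(d)\Rightarrow(c)$ in one line each (the latter using Lemmas~\ref{lem:corners} and~\ref{lem:centralcoverincorners} just as you do for $(c)\Leftrightarrow(d)$). Your route does more by hand---your $(a)\Rightarrow(b)$ via iterated conjugation and residue absorption is a clean self-contained argument the paper does not spell out---but your hard step $(d)\Rightarrow(a)$ lands on the same citation. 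One remark on your $x$-summand sketch: the claim $G\sim xe$ is not automatic from what you have written (the residue $p$ could in principle be large relative to $G$), and your iterative construction is not quite the Zorn's-lemma argument Berberian actually uses; but since you explicitly defer the details to \cite[Section~17]{berberian}, this is not a gap. The paper's route is simply more economical, reaching the citation without unpacking $(a)\Leftrightarrow(b)$ at all.
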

\begin{proof}
  That $(c) \Rightarrow (b) \Rightarrow (a)$ is essentially~\cite[Theorem~17.1]{berberian}.
  Suppose~(d) holds, and let $x$ be a nonzero central projection in
  $e\alg e$.  Lemma~\ref{lem:centralcoverincorners} provides
  $z \in \Proj(Z(\alg))$ such that $x=ze$, which is infinite in
  $\alg$ by assumption. So $x \sim_{\alg} f < x$ for some $f$ in
  $\alg$. But then $f \in e\alg e$ satisfies $x \sim_{e\alg e} f <
  x$ in $e \alg e$ by Lemma~\ref{lem:corners}. Hence $x$
  is infinite in $e\alg e$, establishing~(c).
  Finally, we prove $(a)\Rightarrow(d)$. Let $z$ be a central
  projection in $\alg$ such that $ze>0$. Then $ze_2 \sim ze$ is nonzero
  whence $ze \sim ze_1 < ze_1+ze_2=ze$. So $ze$ is infinite.
\end{proof}

A nonzero projection $e$ in an AW*-algebra is \emph{properly infinite} if it satisfies the
equivalent conditions of the previous lemma. Being properly infinite
is preserved by equivalence of projections (\textit{e.g.}~by Lemma~\ref{lem:properlyinfinite}(c)).
It also follows from the previous lemma that $ze$ is properly infinite
for any nonzero central projection $z \leq \cover{e}$.

\begin{lemma}\label{lem:infiniteincorners}
  Let $e$ be a projection in an AW*-algebra $\alg$.
  \begin{enumerate}[(a)]
  \item A projection in $e\alg e$ is properly infinite in $\alg$ if and only
    if it is so in $e\alg e$.
  \item If $e$ is infinite, there is a central projection $z \in \alg$ making
    $ze$ finite and $(1-z)e$ properly infinite.
  \end{enumerate}
\end{lemma}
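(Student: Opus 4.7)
For part (a), my plan is to combine Lemma~\ref{lem:properlyinfinite}(c) with the observation that $f\alg f = f(e\alg e)f$ whenever $f \leq e$. The identity holds because $fe = ef = f$ forces $faf = f(eae)f$ for every $a \in \alg$. Thus proper infiniteness of $f$ viewed in $\alg$ or in $e\alg e$ reduces to the very same statement about the AW*-algebra $f\alg f$, and the equivalence follows immediately.

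For part (b), my plan is to apply Zorn's lemma to the collection of orthogonal families of nonzero central projections of $\alg$ whose members $z_i$ all satisfy that $z_i e$ is finite. Let $\{z_i\}$ be a maximal such family (possibly empty) and set $z = \sum z_i \in \Proj(Z(\alg))$. To verify that $ze$ is finite, I would assume $ze \sim f \leq ze$; multiplying by the central projection $z_j$ preserves equivalence, yielding $z_j e \sim z_j f \leq z_j e$, and finiteness of $z_j e$ then forces $z_j f = z_j e$. Summing over $j$ and using $f \leq z$ gives $f = \sum z_j f = \sum z_j e = ze$, as required. To verify that $(1-z)e$ is properly infinite, I would first note that $(1-z)e \neq 0$, since $(1-z)e = 0$ would make $e = ze$ finite, contradicting the hypothesis. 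Proper infiniteness then follows from condition~(d) of Lemma~\ref{lem:properlyinfinite}: if some central projection $w$ made $w(1-z)e$ nonzero and finite, then $w(1-z)$ would be a nonzero central projection orthogonal to every $z_i$ (since $(1-z)z_i = 0$), and adjoining it to $\{z_i\}$ would contradict maximality.

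The main technical obstacle is the finiteness step in (b): it relies both on preservation of equivalence under multiplication by central projections (recorded in the preliminaries) and on the distributivity identity $\sum z_j e = ze$ for the central orthogonal family $\{z_j\}$. Both ingredients are standard for AW*-algebras, and once they are in hand the rest of the argument is a routine Zorn extension/contradiction.
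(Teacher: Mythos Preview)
Your proof is correct and follows essentially the same approach as the paper. For~(a) you invoke characterization~(c) of Lemma~\ref{lem:properlyinfinite} via the identity $f\alg f = f(e\alg e)f$, whereas the paper uses characterization~(a) together with Lemma~\ref{lem:corners}(b); for~(b) both arguments run the same Zorn/maximality step, with you supplying a direct finiteness argument for $ze$ (and the explicit check that $(1-z)e\neq 0$) in place of the paper's citation of~\cite[Proposition~15.8]{berberian}.
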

\begin{proof}
  For (a), let $f \in e \alg e$ be a projection that is properly infinite in
  $\alg$. Then $f = a_1+a_2$ and $a_1 \sim_{\alg}  a_2 \sim_{\alg}  f$ for some
  $a_1,a_2 \in \Proj(e \alg e)$ by Lemma~\ref{lem:properlyinfinite}. But
  since $a_i \leq f$, in fact $a_i \in \Proj(e\alg e)$ and $a_1
  \sim_{e\alg e} a_2 \sim_{e\alg e} f$ by Lemma~\ref{lem:corners}. So 
  $f$ is properly infinite in $e\alg e$. The converse is trivial.

  For (b), let $\{z_i\}$ be a maximal orthogonal family of nonzero central
  projections such that $z_ie$ is finite for each $i$. Set $z=\sum z_i$. Then $ze$
  is finite~\cite[Proposition~15.8]{berberian}. Moreover, if $y$ is a central
  projection such that $y(1-z)e$ is finite, then $y(1-z)$ must be zero by
  maximality of $\{z_i\}$. So $(1-z)e$ is properly infinite by
  Lemma~\ref{lem:properlyinfinite}(d).
\end{proof}

\subsection*{Decomposition into types}

Another property that survives passing to corners is the decomposition
into types of an AW*-algebra. Recall that a projection $e$ is abelian
when $e \alg e$ is abelian. An AW*-algebra is of type $\typeone$
if it has an abelian projection with central cover 1; it is of type
$\typetwo$ if it has a finite projection with central cover 1 but no
nonzero abelian projections; and it is of type $\typethree$ if it has
no nonzero finite projections. More specifically, type $\typetwo_1$
means type $\typetwo$ and finite; type $\typetwo_\infty$ means
type $\typetwo$ and properly infinite; type $\typeone_\infty$ means
$\typeone$ and properly infinite. (Notice that the zero algebra is
of all types.)

\begin{lemma}\label{lem:KR6.9.16}
  Let $e$ be a nonzero projection in an AW*-algebra $\alg$.
  \begin{enumerate}[(a)]
  \item $e\alg e$ is finite if $\alg$ is finite;
  \item $e\alg e$ is of type $\typeone$ if $\alg$ is of type $\typeone$;
  \item $e\alg e$ is of type $\typeone_\infty$ if $\alg$ is of type
    $\typeone_\infty$ and $e$ is properly infinite;
  \item $e\alg e$ is of type $\typetwo_1$ if $\alg$ is of type $\typetwo_1$;
  \item $e\alg e$ is of type $\typetwo_\infty$ if $\alg$ is of type
    $\typetwo_\infty$ and $e$ is properly infinite;
  \item $e\alg e$ is of type $\typethree$ if $\alg$ is of $\typethree$.
  \end{enumerate}
\end{lemma}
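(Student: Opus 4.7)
The plan is to handle (a)--(f) by systematic use of three already established tools: Lemma~\ref{lem:corners} (projections of $e\alg e$ are exactly projections of $\alg$ below $e$, and equivalences transfer), Lemma~\ref{lem:centralcoverincorners} ($\cover[e\alg e]{f}=\cover[\alg]{f}e$, and $Z(e\alg e)=eZ(\alg)$), and Lemma~\ref{lem:properlyinfinite} (characterisations of properly infinite projections). A key reformulation I will use repeatedly is that for $q\leq e$, the projection $q$ has central cover equal to the identity $e$ of $e\alg e$ if and only if $\cover[\alg]{q}\geq\cover[\alg]{e}$.

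Parts (a) and (f) fall out immediately: any infinite projection of $e\alg e$ is an infinite projection of $\alg$ by the transfer lemmas, so finiteness and absence of nonzero finite projections (type~$\typethree$) descend to the corner. For (b), I would apply a Zorn's lemma argument (or cite~\cite{berberian}) to the standard structural fact that in a type~$\typeone$ AW*-algebra every nonzero projection admits an abelian subprojection with the same central cover; applied to $e$, this yields an abelian $q\leq e$ with $\cover[\alg]{q}=\cover[\alg]{e}$, and $q$ remains abelian in $e\alg e$ because $q(e\alg e)q=q\alg q$. Part (c) is then immediate by combining (b) with Lemma~\ref{lem:properlyinfinite}(c), which ensures $e\alg e$ is properly infinite as soon as $e$ is.

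For (d), finiteness of $e\alg e$ is part (a), the projection $e$ itself serves as a finite projection of $e\alg e$ with central cover the identity, and absence of abelian projections in $e\alg e$ follows because any abelian $q\leq e$ in $e\alg e$ also has $q\alg q=q(e\alg e)q$ abelian, contradicting the type~$\typetwo_1$ hypothesis on $\alg$.

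The main obstacle is (e): producing a finite projection inside $e$ whose central cover in $\alg$ matches $\cover[\alg]{e}$. Starting from any finite $f\in\alg$ with $\cover[\alg]{f}=1$ (available by the type~$\typetwo_\infty$ hypothesis), I would consider $\cover{e}f$, which is finite with central cover $\cover{e}$, and apply the comparison theorem to $\cover{e}f$ against $e$ inside the central summand $\cover{e}\alg$. The summands on which $e$ is equivalent to or dominated by $\cover{e}f$ must vanish: on any such nonzero central piece $w$, the projection $we$ would be properly infinite (Lemma~\ref{lem:properlyinfinite}) yet equivalent to a subprojection of the finite $w\cover{e}f$, impossible. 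Hence $\cover{e}f\precsim e$, producing a finite $g\leq e$ with $\cover[\alg]{g}=\cover[\alg]{e}$, so $\cover[e\alg e]{g}=e$. Together with the proper infiniteness of $e\alg e$ from Lemma~\ref{lem:properlyinfinite}(c) and the same no-abelian-projection argument as in (d), this completes type~$\typetwo_\infty$.
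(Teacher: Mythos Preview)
Your proposal is correct and follows essentially the same route as the paper. The paper is terser---it cites \cite[Exercise~18.2]{berberian} directly for (b), invokes Lemma~\ref{lem:infiniteincorners} for (f), and for (e) simply asserts $\cover{e}f \prec e$ by appealing to the remark (stated just before Lemma~\ref{lem:properlyinfinite}) that a finite projection is strictly dominated by a properly infinite one with larger central cover---but your spelled-out comparison argument for (e) is exactly what underlies that remark, and the remaining parts match.
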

\begin{proof}
  First, notice that a projection $p$ in $e\alg e$ is abelian in $e\alg e$ if and
  only if $pe\alg ep=p\alg p$ is abelian, if and only if $p$ is abelian in $\alg$.

  For (a), suppose $f \in \Proj(e \alg e)$ is
  finite in $\alg$. That means that $p \leq f \sim_{\alg}  p$
  implies $p=f$ for all $p \in \Proj(\alg)$. As $f \leq e$, Lemma~\ref{lem:corners}(b)
  makes this equivalent to: $p \leq f \sim_{e\alg e} p$ implies $p=f$
  for $p \in \Proj(e\alg e)$. But this means that $f$ is finite in $e\alg e$.  
  Part (b)
  is~\cite[Exercise~18.2]{berberian}. For (c): $e\alg e$
  is of type $\typeone$ by (a), and contains a properly infinite
  projection $e$ by Lemma~\ref{lem:infiniteincorners}. Part (d) follows
  from (a) and the above observation about abelian projections. Part
  (f) follows from Lemma~\ref{lem:infiniteincorners}. 

  Finally, we turn to (e). If $\alg$ is of type $\typetwo_\infty$, it has
  a finite projection $f$ with $\cover{f}=1$, and no nonzero abelian
  projections. So, by the above observation, also $e\alg e$ has no nonzero
  abelian projections. Because $e$ is properly infinite and $f$
    is finite, it follows from the comparison theorem that
  $\cover{e}f \prec e$. Thus $\cover{e}f \sim
  e_0 < e$ for some finite projection $e_0$ with $\cover{e_0}=\cover{e}$. It now
  follows from Lemmas~\ref{lem:corners}(c) and~\ref{lem:centralcoverincorners} 
  that $e_0$ is finite in $e\alg e$
  with $\cover[e\alg e]{e_0}=e$. Finally, $e$ is properly infinite in $e\alg e$ by
  Lemma~\ref{lem:infiniteincorners}(a), making $e\alg e$ of type $\typetwo_\infty$.
\end{proof}

\section{Relative comparison for AW*-algebras of finite type}
\label{sec:comparison:finite}

We begin by quickly disposing of the relative comparison theory
for AW*-algebras of finite type.  This involves relatively straightforward
generalizations of Kadison's results to AW*-algebras; the section is
included in the interest of completeness.
The results of this section and Section~\ref{sec:comparison:infinite} will
show that we can always find projections with various properties, not just
in an AW*-algebra $\alg$, but in any maximal abelian subalgebra $\masa$
of $\alg$. In this context, whenever we mention without specification
concepts such as $\sim$, $\d$, finite, infinite, abelian, or central cover,
we mean the corresponding concepts in $\alg$ (and not in $\masa$).
We start by considering AW*-algebras of type $\typetwo_1$.

\begin{proposition}\label{prop:KR6.9.27}
  Let $n$ be a positive integer, and let $\alg$ be an AW*-algebra of type
  $\typetwo_1$. Let $\masa$ be a maximal abelian subalgebra and $e \in \Proj(\masa )$.
  \begin{enumerate}[(a)]
  \item There is a sequence $e_0,e_1,e_2,\ldots \in \Proj(\masa )$ with
    $e_0=e$, $\cover{e_i}=\cover{e}$, $e_i \leq e_{i-1}$, and $e_i \precsim e_{i-1}-e_i$.
  \item If $f \in \Proj(\alg )$ satisfies $\cover{e}\cover{f} \neq 0$, then there is
    a nonzero $g \in \Proj(\masa )$ with $g \leq e$ and $g \precsim f$.
  \item If $f \in \Proj(\alg )$ satisfies $f \precsim e$, then $f \sim e_1
    \leq e$ for some $e_1 \in \Proj(\masa )$.
  \item $\masa$ contains $n$ orthogonal equivalent projections with sum $1$.
  \end{enumerate}
\end{proposition}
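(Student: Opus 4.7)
The plan is to establish the four parts sequentially, leveraging two structural facts: every corner $e\alg e$ of $\alg$ is again of type $\typetwo_1$ (Lemma~\ref{lem:KR6.9.16}(d)), and $e\masa$ is maximal abelian in $e\alg e$ whenever $e \in \Proj(\masa)$ (Lemma~\ref{lem:corners}(c)). For (a) I build the sequence inductively: given $e_{i-1}$, descend into the corner $e_{i-1}\alg e_{i-1}$, which inherits type $\typetwo_1$ and therefore has no abelian central summand. Lemma~\ref{lem:KR6.9.19} then supplies $e_i \in e_{i-1}\masa$ satisfying $\cover[e_{i-1}\alg e_{i-1}]{e_i} = e_{i-1}$ and $e_i \precsim_{e_{i-1}\alg e_{i-1}} e_{i-1} - e_i$. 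Lemma~\ref{lem:corners}(b) transports the equivalence back to $\alg$, while Lemma~\ref{lem:centralcoverincorners} gives $\cover[\alg]{e_i}\,e_{i-1} = e_{i-1}$, forcing $\cover{e_i} = \cover{e_{i-1}} = \cover{e}$ by induction.

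For (b), set $z_0 := \cover{e}\cover{f}$ and apply the comparison theorem to $z_0 e$ and $z_0 f$, producing orthogonal central projections $x + y + w = z_0$ with $xe \precsim xf$, $ye \sim yf$, and $we \succ wf$. If $x + y \neq 0$ then $g := (x+y)e$ lies in $\masa$ (as $Z(\alg) \subseteq \masa$), is nonzero since $x + y \leq \cover{e}$, and satisfies $g \leq e$ with $g \precsim (x+y)f \leq f$. Otherwise $z_0 e \succ z_0 f$, and I rerun the comparison with the $e_i$ from (a) in place of $e$: either some stage lands in one of the first two cases and yields $g$, or $z_0 f \precsim z_0 e_i$ for every $i$. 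In the latter case, (a) forces $e \succeq 2^i \cdot e_i$ as orthogonal copies, and the pairwise orthogonal $e_{i-1} - e_i \succeq e_i \succeq z_0 e_i \succeq z_0 f$ embed an infinite orthogonal family of copies of the nonzero projection $z_0 f$ into $e$, contradicting $\alg$ being finite.

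For (c), Zorn yields a maximal orthogonal family of pairs $(a_i, b_i) \in \Proj(\masa) \times \Proj(\alg)$ with $a_i \leq e$, $b_i \leq f$, and $a_i \sim b_i$; summation gives $a \in \masa$, $a \sim b$, $a \leq e$, $b \leq f$. I claim $b = f$, so that $e_1 := a$ works. If $b < f$, the key step is to show $\cover{f - b} \leq \cover{e - a}$: otherwise a nonzero central $z \leq \cover{f - b}$ orthogonal to $\cover{e - a}$ makes $z(e - a) = 0$, so $ze = za \sim zb$; combined with $zf \precsim ze$ this forces $zf \precsim zb$, and Schr\"oder--Bernstein together with finiteness of $\alg$ give $zb = zf$, contradicting $z(f - b) \neq 0$. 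Then (b) applies to $e - a \in \masa$ and $f - b$, producing $h \in \masa$ with $h \leq e - a$ and $h \precsim f - b$; pairing $h$ with $h' \sim h$, $h' \leq f - b$, and adjoining to the family contradicts maximality.

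For (d), Zorn yields a maximal orthogonal family of $n$-tuples in $\masa$ of pairwise orthogonal equivalent projections; summation gives orthogonal equivalent $P_1, \ldots, P_n \in \masa$ with residue $Q := 1 - \sum P_i \in \masa$, and I show $Q = 0$. If instead $Q \neq 0$, then $Q\alg Q$ is again of type $\typetwo_1$ with $Q\masa$ maximal abelian, and it suffices to extract $n$ orthogonal equivalent nonzero projections in $Q\masa$ to contradict maximality. Applying (a) in $Q\alg Q$ starting from $Q$ for $k$ steps, where $2^k \geq n$, produces $Q_k \in Q\masa$ with $Q \succeq 2^k \cdot Q_k \succeq n \cdot Q_k$, realised by pairwise orthogonal $q_1, \ldots, q_n \leq Q$ in $\alg$ each $\sim Q_k$. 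Iteratively using (c) inside $Q\alg Q$, one transports these into $Q\masa$: given $r_1, \ldots, r_j \in Q\masa$ pairwise orthogonal with each $r_i \sim Q_k$, cancellation of equivalent projections in the finite algebra $Q\alg Q$ gives $Q - \sum q_i \sim Q - \sum r_i$, hence $q_{j+1} \precsim Q - \sum r_i$, and (c) yields $r_{j+1} \in Q\masa$ with $r_{j+1} \sim Q_k$ and $r_{j+1} \leq Q - \sum r_i$. The resulting $n$-tuple contradicts maximality. I expect (d) to be the main obstacle: unlike (a)--(c), it demands $n$-fold division in $\masa$ for arbitrary $n$ rather than only powers of two, combining (a) to supply enough room, (c) to move individual copies into $\masa$, and cancellation in the ambient finite algebra to track how much room remains at each step.
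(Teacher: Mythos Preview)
Your argument is correct. Parts (a) and (b) follow the paper's proof essentially verbatim: induct by passing to the corner $e_{i-1}\alg e_{i-1}$ and invoke Lemma~\ref{lem:KR6.9.19}, then for~(b) run the comparison theorem against the descending chain $\{e_i\}$ until either a suitable $g$ appears or an infinite orthogonal family of copies of a nonzero projection contradicts finiteness. In~(c) there is a minor difference: the paper dispatches the step ``$f-f_1 \precsim e-e_1$'' in one line via the cancellation results \cite[Proposition~17.5, Exercise~17.3]{berberian}, whereas you instead prove the weaker statement $\cover{f-b} \leq \cover{e-a}$ by hand (using Schr\"oder--Bernstein and finiteness). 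Both are fine and lead to the same application of~(b).

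Part~(d) is where the two approaches genuinely diverge. The paper invokes \cite[Theorem~19.1]{berberian} to obtain $n$ orthogonal equivalent projections $f_1,\ldots,f_n$ in $\alg$ \emph{with sum~$1$}, and then uses~(c) together with cancellation (\cite[Proposition~17.5]{berberian}) to transport them one at a time into $\masa$; since the resulting $e_1+\cdots+e_n \sim 1$ and $\alg$ is finite, the sum equals~$1$. Your route avoids \cite[Theorem~19.1]{berberian} entirely: you use a Zorn argument on $n$-tuples, reduce to finding $n$ nonzero orthogonal equivalent projections in $Q\masa$, manufacture $2^k \geq n$ orthogonal copies of $Q_k$ via repeated halving from~(a), and then move them into $Q\masa$ with the same (c)-plus-cancellation trick. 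The paper's proof is shorter because the structure theorem already hands you projections summing to~$1$; your proof is more self-contained but essentially reproves a fragment of \cite[Theorem~19.1]{berberian} (which is itself proved by halving) and then needs the extra Zorn layer to fill up the residue~$Q$.
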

\begin{proof}
  (a) If $e=0$, choose $e_i=0$ for each $i$. Suppose $e>0$. Then $e\alg e$
  is of type $\typetwo_1$ by Lemma~\ref{lem:KR6.9.16}, and $e\masa e$ is a
  maximal abelian subalgebra. In particular, $e\alg e$ has no abelian
  central summands. Lemma~\ref{lem:KR6.9.19} gives $e_1 \in
  \Proj(e\masa e)$ with $\cover[e\alg e]{e_1}=e$ and $e_1 \precsim e-e_1$. It
  follows from Lemma~\ref{lem:centralcoverincorners}
  that $\cover[\alg ]{e_1}=\cover[\alg ]{e}$. 
  Induction now provides a sequence with the desired properties.

  For (b): replacing $\alg$, $\masa$, $e$ and $f$ by $z\alg$, $z\masa$, $ze$ and
  $zf$ for $z=\cover{e}\cover{f}$, we may assume that $\cover{e}=\cover{f}=1$.
  Now, if $ye_i \not\prec yf$ for each nonzero central projection $y$, then
  $f \precsim e_i$ by the comparison theorem. If
  $f \precsim e_i$ for each $i$, then $e_{i-1}-e_i$ has a
  subprojection equivalent to $f$ for each $i$. In this case $\alg$
  contains an infinite orthogonal family of projections equivalent to
  $f$, which contradicts the assumption that $\alg$ is finite. Thus,
  $ye_i \prec yf$ for some $i$ and some nonzero central $y$. Now
  $ye_i$ will serve as $g$.

  For (c), let $S$ be the set of pairs consisting of orthogonal families
  $\{e_i \in \masa  \mid i \in \alpha \}$ and $\{f_i \in \alg  \mid i \in \alpha \}$ of nonzero
  projections, where $e_i \sim f_i$ for all $i \in \alpha$, and $e_i
  \leq e$, and $f_i \leq f$. We can partially order $S$ by 
  \[
    (\{e_i \mid i \in \alpha\},\{f_i \mid i \in \alpha \}) \leq (\{e'_j
    \mid j \in \beta\},\{f'_j \mid j \in \beta\})
  \]
  when $\{e_i\} \subseteq \{e'_j\}$ and $\{f_i\} \subseteq \{f'_j\}$. 
  Zorn's lemma provides a maximal element
  $(\{e_i\},\{f_i\})$ in $S$. Set $e_1=\sum_i e_i$ and $f_1=\sum_i
  f_i$. Then $e_1 \sim f_1$ by additivity of equivalence. Now $e_1
  \in \masa$, $e_1 \leq e$, and $f_1 \leq f$. Because $\alg$ is finite and $f
  \precsim e$, we have $f-f_1 \precsim e-e_1$~\cite[Proposition~17.5, Exercise~17.3]{berberian}.
  From (b), there is a nonzero $e_0 \in \Proj(\masa )$ with $e_0 
  \leq e-e_1$ and $e_0 \sim f_0 \leq f-f_1$. But then
  $(\{e_0\} \cup \{e_i\},\{f_0\} \cup \{f_i\})$ is an element of $S$ properly larger
  than $(\{e_i\},\{f_i\})$, contradicting maximality. It follows that
  $e_1 \sim f_1 = f$.

  Finally, we turn to (d). By~\cite[Theorem~19.1]{berberian} there are
  $n$ orthogonal equivalent projections $f_1,\ldots,f_n$ in $\alg$ with
  sum $1$ since $\alg$ has type $\typetwo$. Part
  (c) gives $e_1$ in $\Proj(\masa )$ with $e_1 \sim
  f_1$. From~\cite[Proposition~17.5]{berberian}, $1-e_1 \sim 1-f_1$
  ($=f_2+\cdots+f_n$). Again from (c), there is $e_2 \leq 1-e_1$ in
  $\masa$ with $e_2 \sim f_2$. Continuing in this way, we find
  $e_1,\ldots,e_n \in \Proj(\masa )$ with $e_i \sim f_i$ and
  $e_1+\cdots+e_n \sim f_1+\cdots+f_n=1$. Since $\alg$ is finite,
  $e_1+\cdots+e_n=1$. 
\end{proof}

Next, we turn to AW*-algebras of type $\typeone_n$ ($n=1,2,3,\ldots$): finite algebras of
type $\typeone$ that have an orthogonal family $\{e_1,\ldots,e_n\}$ of
equivalent abelian projections that sum to 1. Equivalently, such
algebras are $*$-isomorphic to $\M_n(\alg[C])$ for a commutative
AW*-algebra $\alg[C]$.

\begin{lemma}\label{lem:KR6.9.21}
  Let $\alg$ be an AW*-algebra of type $\typeone$ with no infinite
  central summand. For each positive integer $n$, let $z_n$ be a central
  projection in $\alg$ such that $z_n\alg$ is of type $\typeone_n$.
  Let $\masa$ be a maximal abelian subalgebra of $\alg$.
  \begin{enumerate}[(a)]
  \item Some nonzero subprojection of $z_n$ in $\masa$ is abelian in $\alg$.
  \item $\masa$ contains an abelian projection with central cover $1$.
  \end{enumerate}
\end{lemma}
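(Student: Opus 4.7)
My plan is to prove (a) by strong induction on $n$ applied uniformly to all finite type-$\typeone$ AW*-algebras, and then to derive (b) from (a) by a Zorn-style exhaustion. The base case $n = 1$ is immediate: $z_1\alg$ is abelian, so $z_1 \in Z(\alg) \subseteq \masa$ is itself abelian. For the inductive step $n \geq 2$, the algebra $z_n\alg$ is purely type $\typeone_n$ and hence has no abelian central summand. Lemma~\ref{lem:KR6.9.19} applied to $z_n\alg$ with MASA $z_n\masa$ (Lemma~\ref{lem:corners}(c)) yields $e \in z_n\masa$ with $\cover[\alg]{e} = z_n = \cover[\alg]{z_n - e}$ and $e \precsim z_n - e$. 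The corner $e\alg e$ is a finite type-$\typeone$ AW*-algebra by Lemma~\ref{lem:KR6.9.16}; using $Z(e\alg e) = eZ(\alg)$ (Lemma~\ref{lem:centralcoverincorners}), its type decomposition supplies central projections $y_k = ew_k \in Z(e\alg e)$ for which $y_k(e\alg e)$ is type $\typeone_k$, where the $w_k \in Z(\alg)$ may be taken orthogonal, $\leq z_n$, and summing to $z_n$.

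The crux is the sub-claim that every nonzero $y_k$ satisfies $k < n$. Granting it, the inductive hypothesis applied to $e\alg e$ with MASA $e\masa$ furnishes, for any nonzero $y_k$, a nonzero projection in $y_k(e\masa) \subseteq \masa$ abelian in $e\alg e$ and hence in $\alg$ (since $p\alg p = p(e\alg e)p$ for $p \leq e$). To prove the sub-claim: a type-$\typeone_n$ algebra cannot host more than $n$ orthogonal equivalent abelian projections, so $k \leq n$. If $k = n$, then $y_k\alg y_k$ contains $n$ orthogonal equivalent abelian projections $h_1, \ldots, h_n$ summing to $y_k$, all with the same central cover $c := \cover[\alg]{h_j} = \cover[\alg]{y_k}$. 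In $c\alg$ (type $\typeone_n$ as a central summand of $z_n\alg$), fiberwise rank-counting shows the $h_j$ must sum to $c$, so $y_k = c$. But $y_k \leq e$ then forces $c \leq e$, hence $c(z_n - e) = 0$, forcing $c \leq 1 - \cover[\alg]{z_n - e} = 1 - z_n$; combined with $c \leq z_n$ this gives $c = 0$, contradicting $y_k \neq 0$.

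For (b), apply Zorn's lemma to orthogonal families $\{p_i\} \subseteq \masa$ of abelian projections with pairwise orthogonal central covers. If $\{p_i\}$ is maximal and $p = \sum p_i$, then $p \in \masa$ is abelian in $\alg$ because the orthogonality of central covers kills the off-diagonal blocks, giving $p\alg p = \bigoplus_i p_i\alg p_i$; moreover $\cover{p} = \sum \cover{p_i}$. If $\cover{p} \neq 1$, set $z = 1 - \cover{p}$; then $z\alg$ is a nonzero finite type-$\typeone$ AW*-algebra with MASA $z\masa$ (Lemma~\ref{lem:corners}(c)), and part (a) applied to $z\alg$ for some $n$ with $zz_n \neq 0$ yields a nonzero abelian projection in $z\masa \subseteq \masa$ whose central cover is orthogonal to every $\cover{p_i}$---contradicting maximality. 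Hence $\cover{p} = 1$.

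The main obstacle is the $k = n$ case of the sub-claim in (a): ruling out that the corner $e\alg e$ itself has a $\typeone_n$ component. This is ultimately a fiberwise dimension count disguised as central-cover bookkeeping, with the essential input being $\cover[\alg]{z_n - e} = z_n$, which ensures $e$ fails to equal $z_n$ in every central fiber. Everything else reduces to routine Zorn arguments and the structural lemmas of Section~\ref{sec:preliminaries}.
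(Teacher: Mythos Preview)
Your proof is correct; part~(b) is essentially the paper's argument. Part~(a) takes a genuinely different route. The paper iterates the halving construction of Lemma~\ref{lem:KR6.9.19} up to $n$ times: either an abelian projection appears along the way, or one obtains a chain $z_n \geq e_1 > \cdots > e_n$ with $e_{j+1} \precsim e_j - e_{j+1}$, and then (multiplying by $y = \cover{e_n}$) the $n+1$ orthogonal projections $e_n,\, y(e_{n-1}-e_n),\, \ldots,\, y(z_n-e_1)$ all have central cover $y$ inside the type-$\typeone_n$ algebra $y\alg$, contradicting~\cite[Proposition~18.2(2)]{berberian}. You instead halve once, decompose the corner $e\alg e$ into its $\typeone_k$ pieces, and invoke strong induction once the sub-claim $k < n$ is established; the $k = n$ case is handled by noting that the $n$ abelian $h_j$ are each equivalent (via~\cite[Proposition~18.1]{berberian}) to the $n$ abelian projections summing to $c$, so $\sum h_j \sim c$ and finiteness forces $y_n = \sum h_j = c$, whence $c \leq e$ contradicts $\cover{z_n - e} = z_n$. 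Both arguments ultimately pivot on the same obstruction---too many orthogonal projections with common central cover in type~$\typeone_n$. The paper's iterated-halving version is slightly more self-contained (it does not invoke the type decomposition~\cite[Theorem~18.3]{berberian} of the corner), while your strong-induction version makes the descent in type explicit and is arguably more conceptual. The informal phrase ``fiberwise rank-counting'' should be replaced by the one-line equivalence-plus-finiteness argument just described.
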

\begin{proof}
  Part (a) is proved by induction on $n$. If $n=1$, then $z_1$ is a
  nonzero abelian projection in $Z(\alg ) \subseteq \masa$. If $n>1$, then
  $z_n \alg$ is an AW*-algebra without abelian central summands, and
  $z_n \masa$ is a maximal abelian subalgebra. From
  Lemma~\ref{lem:KR6.9.19}, $z_n \masa$ contains a projection $e_1$ with
  $\cover{e_1}=z_n$ and $e_1 \precsim z_n - e_1$. Now $e_1\alg e_1$ is a type
  $\typeone$ AW*-algebra without infinite central summands
  by Lemma~\ref{lem:KR6.9.16}. Again, either $e_1\masa$ has a nonzero
  abelian projection $f$, in which case $f\alg f=fe_1\alg e_1f$ is abelian and
  $f$ is an abelian projection in $\alg$, or there is a nonzero
  projection $e_2$ in $e_1\masa$ with $e_2 \precsim e_1-e_2$. Continuing
  in this way, we produce either a nonzero abelian projection in $\masa$
  or a set of $n$ nonzero projection $e_1,\ldots,e_n$ in $z_n\alg$ with
  $e_{j+1} \precsim e_j-e_{j+1}$, $e_1 \precsim z_n-e_1$, and $e_{j+1}
  < e_j$. If $y=\cover{e_n}$, then 
  \[
    e_n, y(e_{n-1}-e_n), y(e_{n-2}-e_{n-1}), \ldots, y(e_1-e_2), y(z_n-e_1)
  \]
  are $n+1$ orthogonal projections in $y\alg$ with the same (nonzero)
  central cover, contradicting the fact that $y\alg = yz_n\alg$ is of type
  $\typeone_n$~\cite[Proposition~18.2(2)]{berberian}. Thus the process
  must end with a nonzero abelian subprojection of $z_n$ in $\masa$ before
  we construct $e_n$.

  For (b), let $\{e_i\}$ be a family of nonzero projections in $\masa$
  abelian for $\alg$ and maximal with respect to the property that
  $\{\cover{e_i}\}$ is orthogonal. Set $p=\sum \cover{e_i}$. If $p \neq 1$,
  then $(1-p) \alg$ is an AW*-algebra of type $\typeone$ with no infinite
  central summand. So~\cite[Theorem~18.3]{berberian} implies that there is
  a nonzero central projection $z \leq 1-p$ and a positive integer $n$ such that
  $z(1-p)\alg$ has type~$\typeone_n$.
  From part (a), the maximal abelian subalgebra $z\masa$ of $z \alg$
  contains a nonzero abelian projection $e_0$. But then we may adjoin $e_0$
  to $\{e_i\}$, contradicting maximality. Thus $p=1$. Now $\sum e_i$ is abelian
  for $\alg$~\cite[Proposition~15.8]{berberian}, has central cover $1$,
  and lies in $\masa$.
\end{proof}

\begin{lemma}\label{lem:KR6.9.22}
  Let $e_1$ be an abelian projection with $\cover{e_1}=1$ in an
  AW*-algebra $\alg$ of type $\typeone_n$ for $n$ finite. 
  Then there is a set of $n$ orthogonal equivalent projections with
  sum $1$ in $\alg$ containing $e_1$ (so that each is abelian in $\alg$),
  and $(1-e_1)\alg (1-e_1)$ is of type $\typeone_{n-1}$.
\end{lemma}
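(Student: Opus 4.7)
The plan is to use the homogeneity of type $\typeone_n$ to match $e_1$ with one member of a canonical decomposition of $1$, and then transport the remaining projections through a partial isometry realising the equivalence of complements. By the structure theory of type $\typeone_n$ AW*-algebras~\cite[Section~18]{berberian}, $\alg$ contains an orthogonal family $\{f_1,\ldots,f_n\}$ of mutually equivalent abelian projections summing to $1$, with $\cover{f_i} = 1$. A standard consequence of the homogeneity theory for type $\typeone$ AW*-algebras is that any two abelian projections with the same central cover are equivalent; applied to $e_1$ and $f_1$ this yields $e_1 \sim f_1$.

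Since $\alg$ is finite, \cite[Proposition~17.5]{berberian} lifts this to $1-e_1 \sim 1-f_1 = f_2 + \cdots + f_n$. Fix a partial isometry $w \in \alg$ with $w^*w = 1-e_1$ and $ww^* = 1-f_1$, and set $e_i := w^* f_i w$ for $2 \leq i \leq n$. Using $f_i \leq 1 - f_1 = ww^*$, a direct calculation shows that each $e_i$ is a projection in $(1-e_1)\alg(1-e_1)$, that the family $\{e_2,\ldots,e_n\}$ is pairwise orthogonal with $\sum_{i=2}^n e_i = w^*(1-f_1)w = 1-e_1$, and that $e_i \sim f_i$ via the partial isometry $f_i w$. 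Transitivity of equivalence gives $e_i \sim e_1$, and since equivalence of projections induces a $*$-isomorphism of the corresponding corner algebras, each $e_i$ is abelian in $\alg$; this establishes the first assertion.

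For the remaining claim, $(1-e_1)\alg(1-e_1)$ is an AW*-algebra whose unit $1-e_1$ decomposes as the sum of the $n-1$ orthogonal equivalent abelian projections $e_2,\ldots,e_n$, each with central cover $\cover[(1-e_1)\alg(1-e_1)]{e_i} = \cover[\alg]{e_i}(1-e_1) = 1-e_1$ by Lemma~\ref{lem:centralcoverincorners}; hence it is of type $\typeone_{n-1}$. The only step that is not a routine transport of structure is the equivalence $e_1 \sim f_1$, which relies on locating the precise statement in Berberian that abelian projections with equal central cover are equivalent in homogeneous type $\typeone_n$ AW*-algebras; with that ingredient in hand the rest follows mechanically.
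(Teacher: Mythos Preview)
Your proof is correct and follows essentially the same approach as the paper: both start from a canonical decomposition $1 = f_1 + \cdots + f_n$ into equivalent abelian projections, invoke the fact that abelian projections with the same central cover are equivalent to get $e_1 \sim f_1$ (the paper cites \cite[Proposition~18.2(1)]{berberian}; the general fact that abelian projections with equal central cover are equivalent is \cite[Proposition~18.1]{berberian}), and then use finiteness together with \cite[Proposition~17.5]{berberian} to obtain $1-e_1 \sim 1-f_1$ and transport $f_2,\ldots,f_n$ into $(1-e_1)\alg(1-e_1)$. You spell out the transport via the partial isometry and the verification that $(1-e_1)\alg(1-e_1)$ has type $\typeone_{n-1}$ explicitly, whereas the paper leaves these as implicit consequences, but the argument is the same.
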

\begin{proof}
  Fix orthogonal equivalent abelian projections $f_1, \dots, f_n$ 
  with $\sum f_i = 1$. Then $e_1 \sim f_1$ ($\sim f_i$ for all $i$)
  by~\cite[Proposition~18.2(1)]{berberian}, and $1-e_1 \sim 1-f_1$
  by~\cite[Proposition~17.5]{berberian}.
  So $1-e_1$ is the sum of $n-1$ orthogonal equivalent abelian
  projections that are equivalent to $f_1 \sim e_1$ because the same is
  true for $1-f_1 = f_2 + \cdots + f_n$. The claim now follows.
\end{proof}

\begin{proposition}\label{prop:KR6.9.23}
  Let $\alg$ be an AW*-algebra of type $\typeone_n$ with $n$ finite, and
  let $\masa$ be a maximal abelian subalgebra.
  \begin{enumerate}[(a)]
  \item There is an orthogonal set $\{e_1, \dots, e_n\}$ of equivalent abelian
    projections in $\masa$ with sum $1$ with central cover $1$.
  \item $\masa$ contains $l$ orthogonal projections with sum $1$
    equivalent in $\alg$ if $n=lm$ (with $l$ and $m$ positive integers).
  \end{enumerate}
\end{proposition}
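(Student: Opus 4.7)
The plan is to prove (a) by induction on $n$, exploiting Lemmas~\ref{lem:KR6.9.21}(b) and~\ref{lem:KR6.9.22} in tandem: the first supplies an abelian projection in $\masa$ with central cover~$1$, while the second shows that excising such a projection drops the type from $\typeone_n$ to $\typeone_{n-1}$. The base case $n=1$ is trivial since $\alg$ is abelian, so $\masa=\alg$ and $e_1=1$ works.

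For the inductive step, first apply Lemma~\ref{lem:KR6.9.21}(b) (whose hypotheses hold because $\typeone_n$ for finite $n$ is finite, hence has no infinite central summand) to produce $e_1 \in \Proj(\masa)$ abelian in $\alg$ with $\cover{e_1}=1$. Lemma~\ref{lem:KR6.9.22} then furnishes auxiliary abelian projections $f_2,\ldots,f_n$ such that $\{e_1,f_2,\ldots,f_n\}$ is an orthogonal equivalent family summing to $1$, and certifies that $(1-e_1)\alg(1-e_1)$ has type $\typeone_{n-1}$. By Lemma~\ref{lem:corners}(c), $(1-e_1)\masa$ is a maximal abelian subalgebra of this corner, so the induction hypothesis yields orthogonal mutually equivalent (in the corner) abelian projections $e_2,\ldots,e_n \in (1-e_1)\masa$ summing to $1-e_1$.

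The main obstacle is then to upgrade equivalence inside the corner to equivalence inside $\alg$ and, more importantly, to link each $e_i$ with the original $e_1$. I would handle this by comparing $e_i$ with the auxiliary $f_i$ \emph{inside} $(1-e_1)\alg(1-e_1)$: both are abelian there (abelianness transfers between $\alg$ and a corner, as noted at the start of the proof of Lemma~\ref{lem:KR6.9.16}) and both have central cover $1-e_1$ in that corner, so they are equivalent by Berberian's result that in a type $\typeone_m$ AW*-algebra abelian projections with the same central cover are equivalent. Thus $e_i \sim_\alg f_i \sim_\alg e_1$, and moreover $\cover[\alg]{e_i} = \cover[\alg]{e_1} = 1$ since equivalence preserves central covers.

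For (b), let $\{e_1,\ldots,e_n\}$ be the family produced in (a) and, using $n = lm$, partition it into $l$ consecutive blocks of size $m$. The block sums $g_j = e_{(j-1)m+1} + \cdots + e_{jm}$ lie in $\masa$, are orthogonal, and sum to $1$; additivity of equivalence makes the $g_j$ mutually equivalent in $\alg$, concluding the proof.
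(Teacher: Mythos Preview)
Your proof is correct and follows the same inductive scheme as the paper: peel off an abelian $e_1\in\masa$ with $\cover{e_1}=1$ via Lemma~\ref{lem:KR6.9.21}(b), drop to type $\typeone_{n-1}$ in the corner via Lemma~\ref{lem:KR6.9.22}, and recurse. One simplification the paper makes that you might adopt: the auxiliary projections $f_2,\ldots,f_n$ from Lemma~\ref{lem:KR6.9.22} are not needed to link the $e_i$ to $e_1$, because once the inductive hypothesis gives $\cover[(1-e_1)\alg(1-e_1)]{e_j}=1-e_1$, Lemma~\ref{lem:centralcoverincorners} already yields $\cover[\alg]{e_j}=1$, and then \cite[Proposition~18.1]{berberian} applied directly in $\alg$ makes all the abelian $e_i$ (including $e_1$) equivalent.
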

\begin{proof}
  Part (a) is proven by induction on $n$. If $n=1$, then $\alg$ is
  abelian, $\masa =\alg$, and $1$ is a projection in $\masa$ abelian in $\alg$ with
  $\cover{1}=1$. Moreover, $\masa$ is the centre of $\alg$. Suppose $n>1$ and our
  assertion is established when $\alg$ is of type $\typeone_k$ for
  $k<n$. Then $\alg$ has no infinite central
  summands. Lemma~\ref{lem:KR6.9.21}(b) applies, giving an abelian projection
  $e_1\in \masa$ with $\cover{e_1}=1$. It follows from
  Lemma~\ref{lem:KR6.9.22} that $(1-e_1)\alg (1-e_1)$ is of type
  $\typeone_{n-1}$, and $(1-e_1)\masa$ is a maximal abelian
  subalgebra. By the inductive hypothesis, $1-e_1$ is the sum of $n-1$
  projections $e_2,\ldots,e_n$ in $(1-e_1)\masa$ that are abelian in
  $(1-e_1)\alg (1-e_1)$ (and hence in $\alg$), and has central cover $1-e_1$
  in $(1-e_1)\alg (1-e_1)$. From Lemma~\ref{lem:centralcoverincorners}
  it follows that $1 = \cover{e_j}$ for $j \geq 2$, and since $\cover{e_1} = 1$
  as well we must have $e_i \sim e_j$ for all $j$ by~\cite[Proposition~18.1]{berberian}.

  For (b), set $f_j = \sum_{k=0}^{m-1} e_{j+kl}$ for
  $j=1,\ldots,l$. Then $f_1,\ldots,f_l$ are orthogonal projections in
  $\masa$ with sum $1$ equivalent in $\alg$.
\end{proof}

\section{Dimension theory}\label{sec:dimension}

Let $e$ be a properly infinite projection in an AW*-algebra $\alg$. We are going to define a
cardinal number $\d(e)$, that we think of as the ``dimension'' of
$e$. The goal of this section is to prove that $e \precsim f$ and
$\cover{e}=\cover{f}$ imply $\d(e)\leq \d(f)$. The next section
will prove the converse in a special case of interest.

Let $\Gamma(e)$ denote the set of all orthogonal families $\{e_i\}$ of
projections such that $e = \sum e_i$ and every $e_i \sim
e$. Lemma~\ref{lem:properlyinfinite} guarantees that $\Gamma(e)$ 
contains an infinite set.
If $\Lambda$ is a set of cardinals, we let $\supP \Lambda$ denote the
least cardinal that is strictly greater than every element of
$\Lambda$. Evidently $\supP \Lambda = \sup 
\{\alpha^+ \mid \alpha \in \Lambda\}$, where $\alpha^+$ denotes the successor
of a cardinal $\alpha$. 
 
\begin{definition}\label{def:tau}
  For a properly infinite projection $e$ in an AW*-algebra, define 
  \begin{align*}
    \d(e) &= \supP \{ \card I \mid \{e_i\}_{i \in I} \in \Gamma(e) \}, \\
    \dbar(e) &= \sup \{ \d(ze) \mid 0 < z \leq \cover{e} \mbox{ is central} \}.
  \end{align*}
  By convention, we agree that $\d(0) = \dbar(0) = 0$. (When the
  algebra $\alg$ in which $\d(e)$ and $\dbar(e)$ are computed needs to
  be emphasized, we will write $\d_{\alg}(e)$ and $\dbar_{\alg}(e)$.)
\end{definition}

As a basic example, suppose that $e \in B(H)$ is a projection on a Hilbert space $H$
whose range $e(H)$ is infinite dimensional. Then $e$ is properly infinite and $\d(e)$
is the successor cardinal of the dimension of $e(H)$, that is, $\d(e) = (\dim e(H))^+$.

The definition of $\d(e)$ uses successors because it is not clear
whether the supremum is achieved, \textit{i.e.}~whether there always exists a
family in $\Gamma(e)$ with cardinality $\sup \{ \card {I}
\mid \{e_i\}_{i \in I} \in \Gamma(e)\}$. 
If this supremum is indeed achieved for all properly infinite projections in all
AW*-algebras, then it would be more sensible to set $\d(e)$ equal to the
supremum $\sup\{ \card(I) \mid \{e_i\}_{i \in I} \in \Gamma(e)\}$;
all results about $\d(e)$ proved below would still hold.
The supremum is always achieved when
the cardinal $\sup \{ \card{I} \mid \{e_i\}_{i \in i} \in \Gamma(e)\}$
is not weakly inaccessible, and when $A$ is a von Neumann algebra; see
Appendix~\ref{app:supachieved}. 
We leave the general question open, and move on to basic results about $\d$. 

Notice that if $e$ is a properly infinite projection in an AW*-algebra $\alg$, then $\Gamma(e)$
and $\d(e)$ are the same whether ``computed'' in $\alg$ or $e \alg
e$. Thus $\d$ is invariant under passing to corners.  
Also, if $e \sim f$ in $\alg$ then $\d(e) = \d(f)$. 

\begin{lemma}\label{lem:alphasquared}
  Let $e$ be a projection in an AW*-algebra $\alg$.
  \begin{enumerate}[(a)]
  \item If $e=\sum_{i \in \alpha} e_i$ for an infinite cardinal $\alpha$, with $\{e_i\}$
    all nonzero and pairwise equivalent, then $e=\sum_{i \in \alpha} e'_i$ with $e'_i \sim e$. So $e$ is
    properly infinite with $\alpha<\d(e)$.
  \item If $e$ is properly infinite projection, and $\{e_i \mid i \in
    \alpha\}$ is an orthogonal set of projections with all $e_i \sim e$
    for a cardinal $\alpha<\d(e)$, then $\sum e_i \sim e$.
  \end{enumerate}
\end{lemma}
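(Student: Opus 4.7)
For part (a), the plan is to invoke the cardinal-arithmetic identity $\alpha \cdot \alpha = \alpha$ (valid for infinite $\alpha$ via the axiom of choice) to fix a partition $\alpha = \bigsqcup_{j \in \alpha} S_j$ with $\card S_j = \alpha$ for each $j$, and then define $e'_j = \sum_{i \in S_j} e_i$. Orthogonality of $\{e'_j\}_{j \in \alpha}$ and the identity $\sum_j e'_j = e$ are immediate from the hypothesis. To see that $e'_j \sim e$, I fix any bijection $\sigma \colon S_j \to \alpha$; since all the $e_i$ are pairwise equivalent, the orthogonal pairs $(e_i, e_{\sigma(i)})_{i \in S_j}$ satisfy the hypothesis of additivity of equivalence, yielding
\[
  e'_j \;=\; \sum_{i \in S_j} e_i \;\sim\; \sum_{i \in S_j} e_{\sigma(i)} \;=\; \sum_{k \in \alpha} e_k \;=\; e.
\]
Since $\alpha$ is infinite, the family $\{e'_j\}_{j \in \alpha}$ verifies condition (b) of Lemma~\ref{lem:properlyinfinite}, so $e$ is properly infinite, and since it lies in $\Gamma(e)$ with cardinality $\alpha$, the definition of $\supP$ forces $\d(e) \geq \alpha^+ > \alpha$.

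For part (b), I first unfold the definition: the inequality $\alpha < \d(e) = \supP\{\card I : \{f_j\}_{j \in I} \in \Gamma(e)\}$ is equivalent to $\alpha \leq \card I$ for some $\{f_j\}_{j \in I} \in \Gamma(e)$. Fix such a family $\{f_j\}_{j \in \beta}$ with $\beta \geq \alpha$, choose a subset $J \subseteq \beta$ with $\card J = \alpha$, and set $F = \sum_{j \in J} f_j \leq e$. Then, via any bijection $\tau \colon J \to \alpha$, the pairs $(f_j, e_{\tau(j)})_{j \in J}$ are orthogonal and pairwise equivalent (both sides $\sim e$), so additivity of equivalence gives $F \sim \sum_{j \in J} e_{\tau(j)} = \sum_{i \in \alpha} e_i =: E$, showing $E \precsim e$. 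Conversely, fixing any $i_0 \in \alpha$, we have $e_{i_0} \leq E$ and $e_{i_0} \sim e$, giving $e \precsim E$. Schr\"oder--Bernstein then forces $E \sim e$.

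The only conceptually delicate step is the cardinal-arithmetic reformulation of $\alpha < \supP \Lambda$ as ``$\alpha \leq \lambda$ for some $\lambda \in \Lambda$'' used at the start of (b); this is just the identity $\supP \Lambda = \sup\{\lambda^+ : \lambda \in \Lambda\}$ noted earlier, together with the fact that $\alpha < \mu^+$ iff $\alpha \leq \mu$. Apart from this piece of bookkeeping, both parts are routine given the tools already in place: additivity of equivalence, Schr\"oder--Bernstein, and the characterization of properly infinite projections in Lemma~\ref{lem:properlyinfinite}. I do not anticipate any genuine obstacle.
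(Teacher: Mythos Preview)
Your proof is correct, and part~(a) is essentially identical to the paper's argument: both use $\alpha^2 = \alpha$ to regroup the family into $\alpha$-many blocks each equivalent to $e$. For part~(b) there is a minor difference worth noting. The paper simply asserts the existence of a family $\{f_i \mid i \in \alpha\} \in \Gamma(e)$ of cardinality \emph{exactly} $\alpha$ (implicitly invoking the downward-closedness of $\{\card I : \{e_i\}_{i \in I} \in \Gamma(e)\}$, which is the very next lemma), and then concludes $\sum e_i \sim \sum f_i = e$ in one stroke by additivity of equivalence, with no need for Schr\"oder--Bernstein. Your route---taking only a size-$\alpha$ subset of a possibly larger family, then closing the gap $E \precsim e$ and $e \precsim E$ via Schr\"oder--Bernstein---is a touch longer but entirely self-contained, with no forward reference. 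Both arguments are straightforward; neither offers a real advantage over the other.
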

\begin{proof}
  For (a); since $\alpha$ is infinite, we have $\alpha^2=\alpha$
  (in cardinal arithmetic). So we can reindex $\{e_i \mid i \in
  \alpha\}$ as $\{e_{ij} \mid i,j \in \alpha\}$, and obtain
  $e=\sum_{i,j \in \alpha} e_{ij}$ with all $e_{ij}$ equivalent and
  orthogonal. Set $e'_i = \sum_{j \in \alpha} e_{ij}$. Then each
  $e'_i \sim e$, and $\sum_{i \in \alpha} e'_i =  e$. 
  
  We turn to (b). Because $\alpha < \d(e)$, there exists a set
  $\{f_i \mid i \in \alpha\} \in \Gamma(e)$. 
  Then $f_i \sim e \sim e_i$ for all $i$, so additivity of equivalence gives $\sum e_i \sim\
  \sum f_i = e$.
\end{proof}

\begin{lemma}\label{lem:downwardclosed}
  If $e$ is a properly infinite projection in an AW*-algebra $\alg$, then the set of cardinals
  $\{\card I \mid \{e_i\}_{i \in I} \in \Gamma(e)\}$ is downward-closed.
\end{lemma}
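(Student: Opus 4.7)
The plan is to start with any family $\{e_i\}_{i \in I} \in \Gamma(e)$ of cardinality $\alpha$ and, given a nonzero cardinal $\beta \leq \alpha$, build a family in $\Gamma(e)$ of cardinality $\beta$ by coalescing the $e_i$ into $\beta$ blocks. The only facts we need are Lemma~\ref{lem:alphasquared}(b) and the crucial observation that the use of $\supP$ (rather than $\sup$) in Definition~\ref{def:tau} guarantees $\alpha < \d(e)$ for every $\alpha$ in the set in question.

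Concretely, I would partition the index set $I$ into $\beta$ nonempty subsets $\{I_j\}_{j \in \beta}$. This is possible whenever $0 < \beta \leq |I|$: fix an injection $\beta \hookrightarrow I$, declare its image (minus a chosen base point) to be $\beta - 1$ singleton blocks, and dump the remaining indices into the block containing the base point. Then put $f_j = \sum_{i \in I_j} e_i$, using the completeness of the projection lattice of $\alg$. The $f_j$ are pairwise orthogonal, and
\[
\sum_{j \in \beta} f_j \;=\; \sum_{j \in \beta} \sum_{i \in I_j} e_i \;=\; \sum_{i \in I} e_i \;=\; e.
\]

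It remains to verify that $f_j \sim e$ for each $j$. Each block has cardinality $\gamma_j$ with $1 \leq \gamma_j \leq \alpha$. By definition of $\d(e) = \supP\{\card J \mid \{e_k\}_{k \in J} \in \Gamma(e)\}$, the cardinal $\alpha$ is strictly less than $\d(e)$, so $\gamma_j < \d(e)$ as well. Since $\{e_i\}_{i \in I_j}$ is an orthogonal family of projections each equivalent to $e$, Lemma~\ref{lem:alphasquared}(b) applies and gives $f_j \sim e$ (the case $\gamma_j = 1$ being trivial). Therefore $\{f_j\}_{j \in \beta} \in \Gamma(e)$ witnesses $\beta$ in the set.

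I do not expect any real obstacle: the argument is essentially a set-theoretic regrouping, and the reason it works at all is the careful choice to define $\d(e)$ as a \emph{strict} successor-style bound. The only thing one must be slightly careful about is keeping every block of the partition nonempty, so that each $f_j$ is a genuine sum of at least one projection and Lemma~\ref{lem:alphasquared}(b) delivers $f_j \sim e$.
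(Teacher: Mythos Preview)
Your argument is correct and follows the same regrouping strategy as the paper: partition the index set into fewer blocks and take the sub-sums $f_j$. The one difference lies in how you verify $f_j \sim e$. The paper arranges each block to have the \emph{full} original cardinality (using the cardinal arithmetic $\alpha \cdot \beta = \beta$ for $\alpha \leq \beta$ with $\beta$ infinite), so that plain additivity of equivalence yields $f_j \sim \sum_i e_i = e$ directly, without appealing to Lemma~\ref{lem:alphasquared}(b) or to the $\supP$ convention in Definition~\ref{def:tau}. Your version allows blocks of arbitrary nonempty size and invokes Lemma~\ref{lem:alphasquared}(b) instead; this is equally valid and avoids the cardinal-arithmetic step, at the modest cost of depending on that earlier lemma (whose proof, incidentally, already contains the Schr\"oder--Bernstein-type reasoning that makes your approach work).
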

\begin{proof}
  Suppose that $\{e_i \mid i \in \beta \} \in \Gamma(e)$ for some cardinal $\beta$,
  and consider any cardinal $\alpha \leq \beta$. We will construct a set in
  $\Gamma(e)$ of cardinality $\alpha$. Write $\beta = \bigsqcup_{j \in \alpha}
  \beta_j$ as a disjoint union of $\alpha$-many subsets $\beta_j$ which each
  have cardinality $\beta$ (this is possible because $\alpha \cdot \beta = \beta$
  in cardinal arithmetic).
  For each $j \in \alpha$, let 
  $
  f_j = \sum_{i \in \beta_j} e_i
  $.
  By additivity of equivalence, $f_j \sim \sum_{i \in \beta} e_i = e$. Thus
  $\{f_j \mid j \in \alpha\} \in \Gamma(e)$.
\end{proof}

\begin{lemma}\label{lem:tau}
  Let $e$ be a projection in an AW*-algebra $\alg$. 
  \begin{enumerate}[(a)]
  \item If $e$ is properly infinite, then $\d(e) \leq \d(ze)$ for
    central projections $0 < z \leq \cover{e}$. 
  \item If $e=\sum e_i$ for an orthogonal set $\{e_i\}$ of properly
    infinite projections, then $e$ is properly infinite and $\d(e) \geq \min\{\d(e_i)\}$.
  \item If $e$ is properly infinite and $\cover{e} = \sum z_i$ for nonzero
    central projections $z_i$, then $\d(e) = \min\{\d(z_i e)\}$.
  \item If $e$ is properly infinite, then $\dbar(ze) \leq
    \dbar(e)$ for any nonzero central projection $z$. 
  \end{enumerate}
\end{lemma}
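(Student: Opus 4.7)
The four claims form an interlocking chain: (a) is a direct pullback, (b) is the main technical step, (c) combines (a) and (b), and (d) reduces to a central-cover identity. For (a), I would take any $\{e_i\}_{i \in I} \in \Gamma(e)$ and multiply through by $z$: centrality gives $ze_i \sim ze$, and $0 < z \leq \cover{e}$ forces $ze \neq 0$ (otherwise $e \leq 1-z$ would push $\cover{e}$ below $1-z$). Each $ze_i$ is therefore nonzero, so $\{ze_i\}_{i \in I} \in \Gamma(ze)$; taking $\supP$ yields $\d(e) \leq \d(ze)$.

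For (b), I would first verify that $e = \sum_i e_i$ is properly infinite via Lemma~\ref{lem:properlyinfinite}(d): given a nonzero central $z$, some $ze_i$ is nonzero and hence infinite (as $e_i$ is properly infinite), so $ze$ contains an infinite subprojection and is itself infinite. For the dimension inequality, fix $\beta < \min_i \d(e_i)$; Lemma~\ref{lem:downwardclosed} lets me pick $\{e_{ij}\}_{j \in \beta} \in \Gamma(e_i)$ for every $i$. Setting $f_j = \sum_i e_{ij}$ produces an orthogonal family with $\sum_j f_j = \sum_i e_i = e$, and additivity of equivalence across $i$ gives $f_j \sim e$. Thus $\{f_j\}_{j \in \beta} \in \Gamma(e)$, so $\d(e) > \beta$ for every $\beta < \min_i \d(e_i)$, giving $\d(e) \geq \min_i \d(e_i)$.

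For (c), part (a) applied to each $z_i \leq \cover{e}$ gives $\d(e) \leq \d(z_i e)$. Conversely, $e = \cover{e} e = \sum_i z_i e$ decomposes $e$ as an orthogonal sum of properly infinite projections (each $z_i e$ is properly infinite by the remark following Lemma~\ref{lem:properlyinfinite}), and (b) supplies the reverse inequality. For (d), I would first establish the identity $\cover{ze} = z\cover{e}$ for central $z$: one direction is immediate, and for the other, any central $w \geq ze$ satisfies $w \vee (1-z) \geq ze + (1-z) \geq e$, hence $w \vee (1-z) \geq \cover{e}$, and multiplying by $z$ gives $w \geq wz \geq z\cover{e}$. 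With this identity every central $0 < w \leq \cover{ze}$ satisfies $w \leq \cover{e}$ and $wz = w$, so $\d(wze) = \d(we)$ is already a term in the supremum defining $\dbar(e)$, whence $\dbar(ze) \leq \dbar(e)$. The main obstacle is the cardinal bookkeeping in (b): one needs families of exactly the cardinality $\beta$ inside each $\Gamma(e_i)$, which is precisely what Lemma~\ref{lem:downwardclosed} supplies.
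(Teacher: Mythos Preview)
Your proof is correct and follows essentially the same route as the paper: the same pullback in (a), the same diagonal regrouping $f_j = \sum_i e_{ij}$ in (b), the same combination of (a) and (b) for (c), and the same containment-of-index-sets argument via $\cover{ze} = z\cover{e}$ for (d). The only cosmetic differences are that you verify proper infiniteness of $e$ in (b) directly via Lemma~\ref{lem:properlyinfinite}(d) (the paper gets it as a byproduct of the $\Gamma(e)$-family it constructs) and that you spell out the identity $\cover{ze} = z\cover{e}$, which the paper simply asserts.
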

\begin{proof}
  Part (a) follows from the observation that if $\{e_i\} \in
  \Gamma(e)$, then $\{ze_i\} \in \Gamma(ze)$ for any nonzero central
  projection $z \leq \cover{e}$.

  For (b), fix an infinite cardinal $\alpha < \min\{\d(e_i)\}$; then for each
  $i$ there exists $\{e_{ij} \mid j \in \alpha\} \in \Gamma(e_i)$.
  For each $j \in \alpha$, define $e_j = \sum_i e_{ij}$. By additivity of equivalence,
  $e_j \sim \sum_i e_i = e$ for all $j$. Because $\sum e_j = e$, we find that $e$ is properly
  infinite with $\{e_j \mid j \in \alpha\} \in \Gamma(e)$ and thus $\alpha < \d(e)$. This
  demonstrates that $\d(e) \geq \min\{\d(e_i)\}$.

  Part (c) follows from (a) and (b).
  Part (d) follows by verifying the equations
  \begin{align*}
    \dbar(e) &= \sup \{ \d(ye) \mid 0 < y \leq \cover{e} \mbox{ is central} \}, \\
    \dbar(ze) &= \sup \{ \d(ye) \mid 0 < y \leq \cover{ze} = z\cover{e} \mbox{ is central} \}, 
  \end{align*}
  and noticing that the latter set over which the $\sup$ is quantified is a
  subset of the former. 
\end{proof}

Theorem~\ref{thm:cilin1} below partly justifies the intuition that $\d(e)$ measures a
``dimension'' of $e$.
If $e \leq f$ are properly infinite projections, then one might expect to have
$\d(e) \leq \d(f)$; this is true under the additional hypothesis that $e$ and $f$ have the
same central cover.
The proof requires transfinite repetition of the following construction.

\begin{lemma}\label{lem:eunderf}
  Let $p \leq q$ be projections in an AW*-algebra $\alg$, and suppose that $p$ is properly infinite.
  There exist projections $p' \in \alg$ and central $z \leq \cover{p}$ satisfying:
    \begin{itemize}
    \item $zp \sim zq$;
    \item $(\cover{p} - z) p \sim p' \leq (\cover{p} - z)(q - p)$ (in particular, $pp' = 0$ and $p' \leq q$);
    \item $\cover{p'} = \cover{p} - z$.
    \end{itemize}
\end{lemma}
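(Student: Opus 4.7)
The plan is to apply the comparison theorem to the pair $(p, q-p)$ and then exploit proper infiniteness of $p$ to absorb the ``$q-p \precsim p$'' region into the ``$p \sim q-p$'' region, leaving only the ``$p \precsim q-p$'' region to supply $p'$.

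First I invoke the comparison theorem to obtain orthogonal central projections $x + y + w = 1$ satisfying
\[
  xp \prec x(q-p), \qquad yp \sim y(q-p), \qquad wp \succ w(q-p).
\]
Because $yp, wp \leq \cover{p}$ and equivalent projections share central covers, both $y(q-p)$ and $w(q-p)$ automatically lie under $\cover{p}$ as well. I then set $z := (y + w)\cover{p}$, so that $\cover{p} - z = x\cover{p}$; this is the central projection claimed by the lemma.

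The key side-observation I will prove is that if $e$ is properly infinite and $f$ is an orthogonal projection with $f \precsim e$, then $e + f \sim e$. One direction of Schroeder--Bernstein is trivial; for $e + f \precsim e$, split $e = e_1 + e_2$ with $e_1 \sim e_2 \sim e$ via Lemma~\ref{lem:properlyinfinite}(a) and pick $f \sim f' \leq e_2$, so additivity gives $e + f \sim e_1 + f' \leq e_1 + e_2 = e$. Applying this observation to the pairs $(yp, y(q-p))$ and $(wp, w(q-p))$ --- both valid because $p$ properly infinite makes each of $yp, wp$ either zero or properly infinite --- and then summing by additivity yields
\[
  zq \;=\; (yp + y(q-p)) + (wp + w(q-p)) \;\sim\; yp + wp \;=\; zp,
\]
so $zp \sim zq$.

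On the complementary piece $\cover{p} - z = x\cover{p}$, the strict comparison $xp \precsim x(q-p)$ furnishes a projection $p' \leq x(q-p)$ with $p' \sim xp = (\cover{p} - z)p$. Since $\cover{p'} = \cover{xp} = x\cover{p} = \cover{p} - z$, the projection $p'$ in fact lies under $x\cover{p}(q-p) = (\cover{p}-z)(q-p)$, and $pp' = 0$ follows from $p' \leq q - p$. I expect the main obstacle to be the ``$e + f \sim e$'' fact: this is the single point where proper infiniteness of $p$ enters essentially, and it is what collapses three comparison-theorem cases into two and allows the entire argument to fit into one central splitting $z,\,\cover{p} - z$.
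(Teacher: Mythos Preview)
Your argument is correct and follows essentially the same route as the paper. The only cosmetic difference is that the paper invokes two-part generalized comparability (a central $z$ with $z(q-p) \precsim zp$ and $(1-z)p \precsim (1-z)(q-p)$, after reducing to $\cover{p}=1$), whereas you use the three-part comparison theorem and then merge the $y$ and $w$ pieces into your $z$; your ``side observation'' $e+f\sim e$ is exactly the mechanism the paper uses (splitting $p=p_1+p_2$) to get $zp\sim zq$.
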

\begin{proof}
  We may pass to the summand $\cover{p} \alg$ and assume that $\cover{p}
  = 1$. By generalized comparability, there exists a central
  projection $z$ such that $z(q-p) \precsim zp$ and $(1-z)p \precsim (1-z)(q-p)$.
  Because $p$ is properly infinite, we may write $p = p_1 + p_2$ for some projections
  $p_1 \sim p_2 \sim p$. Then $z(q-p) \precsim zp \sim zp_1$ and $zp \sim zp_2$. It follows
  that
  \[
  zp \leq zq = z(q-p) + zp \precsim zp_1 + zp_2 = zp,
  \]
  whence $zp \sim zq$. 
  Because $(1-z)p \precsim (1-z)(q-p)$, there exists a projection $p' \leq (1-z)(q-p) \leq q - p$
  such that $(1-z)p \sim p'$. 
  Also, $p' \sim (1-z)p$ means that $\cover{p'} = \cover{(1-z)p} = (1-z) \cover{p}
  = \cover{p} - z$.
\end{proof}

The proof below will regard the cardinal $\d(e)$ as an initial ordinal: the smallest ordinal
in its cardinality class.

\begin{theorem}\label{thm:cilin1}
  Let $e$ and $f$ be properly infinite projections in an AW*-algebra $\alg$. If 
  $\cover{e} = \cover{f}$ and $e \precsim f$, then $\d(e) \leq \d(f)$.
\end{theorem}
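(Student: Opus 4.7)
My plan is to prove, for every infinite cardinal $\alpha<\d(e)$, that $\alpha<\d(f)$; by the $\supP$-definition of $\d(f)$, this yields $\d(e)\leq\d(f)$. First I reduce to the case $e\leq f$ by replacing $e$ with an equivalent subprojection of $f$ (the hypothesis $e\precsim f$ makes this possible, and $\d$ is invariant under equivalence), and then to $\cover{e}=\cover{f}=1$ by passing to the corner $\cover{f}\alg\cover{f}$, using Lemma~\ref{lem:centralcoverincorners} and the invariance of $\d$ under corners noted after Definition~\ref{def:tau}. Fix such an $\alpha$ and, by Lemma~\ref{lem:downwardclosed}, some $\{e_i\}_{i<\alpha}\in\Gamma(e)$; the goal is to construct $\{f_j\}_{j<\alpha}\in\Gamma(f)$.

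The core of the argument is a Zorn's-lemma argument on central projections. Let $\mathcal{W}$ be the set of central projections $w$ such that $wf$ admits an orthogonal decomposition into $\alpha$-many $wf$-equivalent subprojections summing to $wf$. Additivity of equivalence shows $\mathcal{W}$ is closed under orthogonal suprema, so Zorn produces a maximum $w^*\in\mathcal{W}$; it suffices to show $w^*=1$. Suppose for contradiction $w^*<1$ and work in $(1-w^*)\alg$. Another application of Zorn produces a maximal orthogonal family $\{p_\xi\}_{\xi<\gamma}$ of $(1-w^*)e$-equivalent subprojections of $(1-w^*)f$; set $P=\sum_\xi p_\xi$. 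Maximality forces $(1-w^*)e\not\precsim(1-w^*)f-P$ in $(1-w^*)\alg$, so the Comparison Theorem applied to $(1-w^*)e$ and $(1-w^*)f-P$ in $(1-w^*)\alg$ yields a nonzero central projection $z\leq 1-w^*$ with $zf-zP\precsim ze$.

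The pivotal estimate is
\[
zf \;=\; zP + (zf-zP) \;\precsim\; zP + ze \;\precsim\; zP + zP \;\sim\; zP,
\]
where the second $\precsim$ uses $ze\sim zp_\xi\leq zP$ (from $p_\xi\sim(1-w^*)e$ and centrality of $z\leq1-w^*$), and the final equivalence uses proper-infiniteness of $zP$ (so $2\,zP\sim zP$ by Lemma~\ref{lem:alphasquared}(b)); this is legitimate because $\cover{P}=1-w^*\geq z\neq0$, whence $zP$ is a nonzero properly infinite projection. Combined with $zP\leq zf$, this forces $zP\sim zf$. I then split on $|\gamma|$: if $|\gamma|<\d(ze)$ then Lemma~\ref{lem:alphasquared}(b) gives $zP\sim ze$, hence $ze\sim zf$, and transferring $\{ze_i\}_{i<\alpha}\in\Gamma(ze)$ along a partial isometry implementing $ze\sim zf$ produces $\{f^z_i\}_{i<\alpha}\in\Gamma(zf)$. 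Otherwise $|\gamma|\geq\d(ze)\geq\d(e)>\alpha$ by Lemma~\ref{lem:tau}(a), so I partition $\gamma=\bigsqcup_{j<\alpha}I_j$ with $|I_j|=|\gamma|$ for every $j$, define $F_j=\sum_{\xi\in I_j}zp_\xi$, note $F_j\sim zP\sim zf$ by additivity of equivalence via a bijection $I_j\leftrightarrow\gamma$, and transfer along a partial isometry realizing $zP\sim zf$ to obtain $\{f^z_j\}_{j<\alpha}\in\Gamma(zf)$. Either way $z\in\mathcal{W}$ with $z>0$ and $z\leq1-w^*$, contradicting the maximality of $w^*$.

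The main obstacle is the pivotal estimate yielding $zP\sim zf$: the Comparison Theorem alone only gives $zf-zP\precsim ze$, and one must carefully combine this with $ze\precsim zP$ and the proper-infiniteness of $zP$ to push the estimate all the way to $zP\sim zf$. This saturation step is what makes the Zorn argument terminate even on summands where $e$ and $f$ are never equivalent on any central subprojection; without it, the construction would stall precisely in the strict-comparison regime.
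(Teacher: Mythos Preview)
Your argument is correct, and it differs from the paper's in an interesting way. The paper runs a single transfinite recursion: at each ordinal stage it applies Lemma~\ref{lem:eunderf} to a running ``partial sum'' $p=\sum_{\beta<\alpha} y e_\beta$ inside $f$, peeling off a central piece $z_\alpha$ on which $p\sim f$ and producing a new summand $e_\alpha$ on the complement; termination plus Lemma~\ref{lem:tau}(b) then gives $\d(f)\geq\d(e)$ all at once. You instead fix a target cardinal $\alpha<\d(e)$ in advance and work directly toward an element of $\Gamma(f)$ of size $\alpha$, using two Zorn arguments (one on central projections, one on orthogonal copies of $e$ inside $f$) together with generalized comparability and a dichotomy on the size $|\gamma|$ of the maximal family. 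Your route avoids the transfinite bookkeeping and the auxiliary Lemma~\ref{lem:eunderf}; the price is that it must be repeated for each $\alpha$, and the ``pivotal estimate'' $zf\sim zP$ must be justified carefully (your informal chain $zP+(zf-zP)\precsim zP+zP\sim zP$ is best read via the decomposition $zP=a+b$ with $a\sim b\sim zP$ and additivity of $\precsim$ on orthogonal pairs, then Schr\"oder--Bernstein). Two small points worth making explicit in your write-up: first, $\gamma\geq1$ because $(1-w^*)e$ itself sits inside $(1-w^*)f$, and $\cover{P}=1-w^*$ so that $zP$ is indeed nonzero and properly infinite; second, in the case $|\gamma|\geq\d(ze)$ one has $|\gamma|\geq\d(ze)>\aleph_0$, so $|\gamma|$ is infinite and the partition $\gamma=\bigsqcup_{j<\alpha}I_j$ with $|I_j|=|\gamma|$ is available.
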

\begin{proof}
  Passing to the summand $\cover{e} \alg$ and replacing $e$ with an equivalent
  projection $e' \leq f$, we may assume that $\cover{e} = 1 = \cover{f}$ and $e \leq f$.
  We will build projections $z_\alpha$ and $e_\alpha$
  for ordinals $\alpha$ with the following properties: 
  \begin{enumerate}[(a)]
  \item $\{z_\alpha\}$ are central and orthogonal (and possibly zero);
  \item $\cover{e_\alpha} = 1 - \sum_{\beta\leq\alpha} z_\beta$ (so if $e_\alpha = 0$ then $1 = \sum_{\beta \leq \alpha} z_\beta$);
  \item $\{e_\alpha\}$ are orthogonal projections below $f$;
  \item if $z_\alpha > 0$ then $\d(z_\alpha f) \geq \d(e)$;
  \item if $e_\alpha > 0$ then it is properly infinite and $\d(e_\alpha) \geq \d(e)$;
  \end{enumerate}
  such that the process terminates exactly when $\sum z_\alpha =
  1$. Notice that if $\sum e_\alpha=f$, then we must have
  $e_{\alpha+1}=0$, so that the process terminates then by condition~(b).
  
  For $\alpha=0$, set $z_0=0$ and $e_0=e$. Now, suppose that
  $z_\beta$ and $e_\beta$ have already been constructed for
  all $\beta<\alpha$, and that $\sum_{\alpha < \beta} z_\beta < 1$.
  Notice by~(b) that the $\cover{e_\beta}$ form a decreasing chain
  and that 
  \[
    y := \bigwedge_{\beta < \alpha} \cover{e_\beta} = 1 - \sum_{\beta < \alpha} z_\beta.
  \]
  We are assuming that this central projection $y$ is nonzero.
  For each $\beta$, condition~(e) and $0 < y \leq \cover{e_\beta}$
  imply that $ye_\beta$ is properly infinite with $\d(ye_\beta) \geq \d(e_\beta)
  \geq \d(e)$.
  Set
  \[
    p= y \sum_{\beta<\alpha} e_\beta = \sum_{\beta < \alpha} ye_\beta.
  \]
  By Lemma~\ref{lem:tau}(b), $p$ is properly infinite and $\d(p) \geq
  \min\{\d(ye_\beta) \mid \beta<\alpha\} \geq \d(e)$. Furthermore,
  $y \leq \cover{e_\beta}$ for $\beta<\alpha$ by construction,
  so that $\cover{p} = \bigvee \cover{ye_\beta} = y$.
  Applying Lemma~\ref{lem:eunderf} to $p$ and $q=f$ now gives projections $z_\alpha = z$ and
  $e_\alpha=p'$ with the following properties.
  \begin{enumerate}[(a)]
    \item By construction, $z_\alpha$ is central with $z_\alpha \leq \cover{p}
    = 1 - \sum_{\beta < \alpha} z_\beta$.
    Therefore $z_\alpha \perp z_\beta$ for all
    $\beta<\alpha$, and $\{z_\beta \mid \beta \leq \alpha\}$ is orthogonal.
    \item We have $
      \cover{e_\alpha}
      = \cover{p}-z_\alpha 
      = (1-\sum_{\beta<\alpha} z_\beta) - z_\alpha 
      = 1-\sum_{\beta\leq\alpha} z_\beta$.
    \item Directly from Lemma~\ref{lem:eunderf} we have $e_\alpha \leq f - p$.
      So $e_\alpha \leq f$ and $e_\alpha \perp p$, which implies $e_\alpha \perp ye_\beta$
      for all $\beta<\alpha$. Because $\cover{e_\alpha} \leq \cover{p} = y$,
      this means that $e_\alpha \perp e_\beta$ for all $\beta < \alpha$.
      Hence $\{e_\beta \mid \beta \leq \alpha\}$ is orthogonal.
    \item Next, $z_\alpha$ is chosen so that $z_\alpha p \sim z_\alpha f$. Combined with
      $z_\alpha \leq \cover{p}$, we see that if $z_\alpha \neq 0$ then $z_\alpha f \sim
      z_\alpha p$ is properly infinite and $\d(z_\alpha f) = \d(z_\alpha p) \geq \d(p)
    \geq \d(e)$.
    \item Finally, assume $e_\alpha > 0$. The construction of $e_\alpha$ guarantees
        that $e_\alpha \sim \cover{e_\alpha} p$. Since $0 < \cover{e_\alpha} \leq \cover{p}$,
        this means that $e_\alpha$ is properly infinite and we have
        $\d(e_\alpha) = \d(\cover{e_\alpha} p) \geq \d(p) \geq \d(e)$.
  \end{enumerate}
  Transfinite induction now gives us the desired projections
  $\{z_\alpha\}$, $\{e_\alpha\}$.

  If there is a step $\alpha$ in the construction above for which $\sum_{\beta \leq \alpha}
  z_\beta = 1$, then by condition~(d) we have that $f = \sum z_\beta f$ is a sum of properly
  infinite projections with $\d(z_\beta f) \geq \d(e)$ (ignoring those $z_\beta$ which are zero).
  In this case, we conclude from Lemma~\ref{lem:tau}(b) that $\d(f) \geq \d(e)$.
  
  Finally, suppose that $\sum_{\beta \leq \alpha} z_\beta < 1$ at every step $\alpha$.
  Then condition~(b) guarantees that each $e_\alpha$ is nonzero. So the projections
  $\sum_{\beta \leq \alpha} e_\beta$ form a strictly increasing sequence below $f$.
  This chain cannot increase without bound (for instance, it is bounded by $\card(\Proj(fAf))^+$),
  so there exists $\alpha$ such that $\sum_{\beta \leq \alpha} e_\beta = f$. From
  condition~(e) and Lemma~\ref{lem:tau}(b) we once again conclude that $\d(f) \geq \d(e)$.
 \end{proof}

As an easy consequence, we see that $\dbar$ behaves in the same way.

\section{Equidimensional projections}\label{sec:equidimensional}

The hypothesis in Theorem~\ref{thm:cilin1} that the projections $e$ and $f$
satisfy $\cover{e} = \cover{f}$ cannot be removed. For instance, let $H$ and $K$
be infinite dimensional Hilbert spaces with $\dim(H) < \dim(K)$ and consider the
AW*-algebra $A = B(H) \oplus B(K)$.
One can readily compute that $\d((1_H,1_K)) = \dim(H)^+$ and $\d((0,1_K)) =
\dim(K)^+$. Thus $(0,1) < (1,1)$ but $\d((0,1)) > \d((1,1))$.
The issue is that images of the projection $(1,1)$ in the two central
summands $(1,0)A$ and $(0,1)A$ have different dimensions.

It will prove fruitful to focus on so-called equidimensional projections: those
projections for which the above pathology does not occur. We will
show that such projections are equivalent precisely when they have
the same central cover and dimension. Moreover, we will prove that any
properly infinite projection is a sum of equidimensional ones. 

\begin{definition}
  A properly infinite projection $e$ in an AW*-algebra $\alg$ will be called \emph{equidimensional}
  if $\d(ze) = \d(e)$ for every nonzero central projection $z \leq
  \cover{e}$. The AW*-algebra $\alg$ is called \emph{equidimensional} when $1_{\alg} $
  is equidimensional. We say that $e$ is \emph{$\alpha$-equidimensional} for
  a cardinal $\alpha$ if $e$ is equidimensional with $\d(e) = \alpha$. 
  By convention, we will also agree that $0 \in A$ is a 0-equidimensional projection.
\end{definition}


It is straightforward to see that a properly infinite projection $e$
in an AW*-algebra $\alg$ is equidimensional if and only if there exists an infinite cardinal
$\alpha$ such that, for every central projection $z \in \alg$, either $ze =0$
or $\d(ze) = \alpha$.

\begin{lemma}\label{lem:equidimincorners}
  Let $e$ be a properly infinite projection in an AW*-algebra $\alg$.
  \begin{enumerate}[(a)]
  \item $e\alg e$ is equidimensional if and only if $e$ is equidimensional.
  \item If $e$ is equidimensional and $e \sim f$, then $f$ is
    equidimensional. 
  \end{enumerate}
  Hence $e\alg e$ is equidimensional when $\alg$ is equidimensional and $e \sim 1$.
\end{lemma}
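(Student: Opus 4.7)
The plan for (a) is to unwind the definition of equidimensionality of $e\alg e$ into a statement about $\alg$, using invariance properties already established. Lemma~\ref{lem:infiniteincorners}(a) ensures that $e$ is properly infinite in $\alg$ iff it is properly infinite in $e\alg e$, so the proper-infiniteness prerequisite transfers freely between the two algebras. Next, Lemma~\ref{lem:centralcoverincorners} gives $Z(e\alg e) = eZ(\alg)$ and identifies $\cover[e\alg e]{e} = e$, while the remark following Definition~\ref{def:tau} notes that $\d$ is invariant under passing to corners. Putting these together, $e$ is equidimensional in $e\alg e$ iff $\d_\alg(z'e) = \d_\alg(e)$ for every central $z' \in \alg$ with $z'e \neq 0$. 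Setting $z := z'\cover[\alg]{e}$ gives $ze = z'e$ and $z \neq 0 \iff z'e \neq 0$, so this condition is equivalent to $\d(ze) = \d(e)$ for all nonzero central $z \leq \cover{e}$, which is precisely equidimensionality of $e$ in $\alg$.

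For (b), the plan is to invoke three previously-noted invariance properties of equivalence: multiplication by a central projection preserves equivalence (Section~\ref{sec:preliminaries}); equivalent projections have the same value of $\d$ (the remark after Definition~\ref{def:tau}); and proper infiniteness is preserved by equivalence (the remark after Lemma~\ref{lem:properlyinfinite}). In addition, equivalent projections have equal central covers, which follows from the central-multiplication property: if $q$ is central and $e \leq q$, then $(1-q)e = 0 \sim (1-q)f$ forces $(1-q)f = 0$, hence $f \leq q$. Given $e$ equidimensional and $e \sim f$, we then have $f$ properly infinite with $\cover{f} = \cover{e}$, and for any nonzero central $z \leq \cover{f}$ the relation $ze \sim zf$ yields $\d(zf) = \d(ze) = \d(e) = \d(f)$, so $f$ is equidimensional.

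The concluding statement follows by combining (a) and~(b): $1$ equidimensional in $\alg$ together with $1 \sim e$ yields $e$ equidimensional by~(b), whereupon~(a) upgrades this to equidimensionality of $e\alg e$. No substantive obstacle arises here, since the argument reduces to bookkeeping about invariance of $\cover{\cdot}$, $\d$, and proper infiniteness; the only mild subtlety is matching the two different parameterizations of ``nonzero central projection below the central cover of $e$'' on the two sides of~(a).
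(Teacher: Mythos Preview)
Your proof is correct and follows essentially the same approach as the paper's: both use Lemma~\ref{lem:centralcoverincorners} to identify central projections of $e\alg e$ with those of the form $z'e$ for central $z' \in \alg$, together with the corner-invariance of $\d$, and for~(b) both use that $ze \sim zf$ implies $\d(ze) = \d(zf)$. Your version simply spells out more of the bookkeeping (proper-infiniteness transfer, the equivalence $z'e \neq 0 \iff z'\cover{e} \neq 0$, and $\cover{e} = \cover{f}$) that the paper leaves implicit.
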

\begin{proof}
  For (a), let $e$ be equidimensional and let $z$ central in $e\alg e$. Then
  $z = he$ for some central projection $h$ of $\alg$  by
  Lemma~\ref{lem:centralcoverincorners}. 
  Since $e$ is equidimensional in $\alg$, 
  \[
    \d_{e\alg e}(z) = \d_{e \alg e} (he) = \d_{\alg} (he) = \d_{\alg} (e) = \d_{e\alg e}(e).
  \]
  Conversely, assume that $e\alg e$ is equidimensional, and let
  $h \in \alg$ be a central projection. Then $he$ is central in $e\alg e$, so
  \[
    \d_{\alg} (he) = \d_{e\alg e}(he) = \d_{e\alg e}(e) = \d_{\alg} (e).
  \]

  For (b), notice that whenever $z \in \alg$ is a central projection,
  $ze \sim zf$ and thus $\d(ze) = \d(zf)$. The statement clearly follows.
\end{proof}

The following theorem establishes another desired property of a ``dimension''
measure. If the dimension of a projection $e$ is strictly less than the dimension
of a projection $f$, intuition developed in $B(H)$ might lead one to expect that
$e \prec f$.
The example $A = B(H) \oplus B(K)$ with $\dim(H) < \dim(K)$ infinite again shows
that this cannot hold in full generality: fixing any orthogonal projection of $K$
onto a subspace of dimension $\dim(H)$, we have $(1,p) < (1,1)$ and even
$\cover{(1,p)} = \cover{(1,1)}$, but $\d((1,p)) = \d((1,1))$.
As mentioned above, the key assumption that both $e$ and $f$ be equidimensional
makes the intuitive idea true.
The proof below basically uses the same transfinite construction as the proof of
Theorem~\ref{thm:cilin1}, but with different termination conditions. For the sake
of readability, we write it out in full.
 
\begin{theorem}\label{thm:cilin2}
  Let $e$ and $f$ be properly infinite, equidimensional projections in
  an AW*-algebra.
  If $\cover{e}=\cover{f}$ and $e \prec f$, then $\d(e)<\d(f)$.
\end{theorem}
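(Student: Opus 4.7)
The plan is a proof by contradiction combined with a transfinite construction. Suppose $\d(e) = \d(f) = \alpha$, necessarily an infinite cardinal. After passing to the corner $\cover{e}\alg$, I assume $\cover{e} = \cover{f} = 1$. The comparison theorem gives orthogonal central projections $x + y + z = 1$ with $xe \prec xf$, $ye \sim yf$, and $ze \succ zf$; the hypothesis $e \precsim f$ forces $z = 0$, while $e \not\sim f$ forces $x \neq 0$. I restrict attention to the summand $x\alg$, in which equidimensionality, the value of $\d$, matching central covers, and the strict subordination $e \prec f$ are all preserved, and in which maximality of $y$ now guarantees that no nonzero central projection $w$ satisfies $we \sim wf$. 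Finally, I replace $e$ by an equivalent subprojection of $f$ so that $e \leq f$.

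The next step is to build by transfinite recursion an orthogonal family $\{e_\beta\}_{\beta < \alpha}$ of subprojections of $f$, each equivalent to $e$. Set $e_0 = e$. For $0 < \beta < \alpha$, let $p_\beta = \sum_{\gamma < \beta} e_\gamma$; since $|\beta| < \alpha = \d(e)$ and each $e_\gamma \sim e$, Lemma~\ref{lem:alphasquared}(b) gives $p_\beta \sim e$, and in particular $p_\beta < f$ (otherwise $f \sim p_\beta \sim e$, contradicting $e \prec f$). Applying Lemma~\ref{lem:eunderf} to $p_\beta \leq f$ produces a central $z_\beta \leq \cover{p_\beta} = 1$ and a projection $e_\beta$ with $z_\beta p_\beta \sim z_\beta f$ and $(1-z_\beta) p_\beta \sim e_\beta \leq (1-z_\beta)(f - p_\beta)$. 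The first relation combined with $p_\beta \sim e$ yields $z_\beta e \sim z_\beta f$, which forces $z_\beta = 0$ by the reduction, so $e_\beta \sim p_\beta \sim e$ and $e_\beta \perp e_\gamma$ for all $\gamma < \beta$.

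For the contradiction, consider $p_\alpha = \sum_{\beta < \alpha} e_\beta \leq f$: an orthogonal sum of $\alpha$-many pairwise equivalent nonzero projections. Lemma~\ref{lem:alphasquared}(a) gives $\alpha < \d(p_\alpha)$. On the other hand, each $e_\beta \sim e$ has central cover $1$, so $\cover{p_\alpha} = 1 = \cover{f}$, and Theorem~\ref{thm:cilin1} applied to $p_\alpha \precsim f$ gives $\d(p_\alpha) \leq \d(f) = \alpha$. These bounds contradict each other, so $\d(e) \neq \d(f)$; combined with the inequality $\d(e) \leq \d(f)$ from Theorem~\ref{thm:cilin1}, this yields $\d(e) < \d(f)$.

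The main obstacle I expect is the bookkeeping in the reduction step: one must verify that equidimensionality, the value of $\d$, the matching central covers, and the strict subordination all survive passage to the summand $x\alg$, and that the no-equivalent-part property there follows from maximality of $y$ in the comparison theorem. With these preliminaries in place, the transfinite construction goes smoothly, and the final clash between Lemma~\ref{lem:alphasquared}(a) and Theorem~\ref{thm:cilin1} is immediate.
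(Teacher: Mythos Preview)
Your proof is correct and follows the same blueprint as the paper's: both build, by transfinite recursion over ordinals below $\d(e)$, an orthogonal family of subprojections of $f$ each tied to $e$, via repeated application of Lemma~\ref{lem:eunderf}, using Lemma~\ref{lem:alphasquared}(b) to keep the partial sums equivalent to (a central piece of) $e$, and finishing with Lemma~\ref{lem:alphasquared}(a) together with Theorem~\ref{thm:cilin1}.

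The difference lies in your preliminary reduction. The paper does not pass to the summand $x\alg$; instead it allows the central projections $z_\beta$ produced by Lemma~\ref{lem:eunderf} to be nonzero, tracks the decreasing chain $\cover{e_\beta} = 1 - \sum_{\gamma \leq \beta} z_\gamma$ throughout the recursion, and only at the end uses $e \not\sim f$ to conclude $1 - \sum_\beta z_\beta > 0$, cutting down by this central projection before invoking equidimensionality of $f$. Your upfront appeal to the maximality of $y$ in the comparison theorem (established in its proof in Section~\ref{sec:preliminaries}) forces every $z_\beta = 0$, so each $e_\beta$ is directly equivalent to $e$ rather than to $\cover{e_\beta} e$; this removes a layer of bookkeeping and lets you frame the argument as a clean proof by contradiction. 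The paper's version trades this for not needing the uniqueness statement about $y$, but the two are minor variants of one another.
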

\begin{proof}
  Passing to the summand $\cover{e} \alg$ and replacing $e$ with an equivalent
  projection below $f$, we may assume that $\cover{e} = 1 = \cover{f}$, $e \leq f$,
  and $e \not\sim f$.  
  We will build projections $z_\alpha$ and $e_\alpha$
  for ordinals $\alpha<\d(e)$ (regarding $\d(e)$ as an initial ordinal) with the following
  properties: 
  \begin{enumerate}[(a)]
  \item $\{z_\alpha\}$ are central and orthogonal (and possibly zero);
  \item $\cover{e_\alpha} = 1 - \sum_{\beta\leq\alpha} z_\beta$;
  \item $\{e_\alpha\}$ are orthogonal projections below $f$; 
  \item $e_\alpha \sim \cover{e_\alpha} e$;
  \item $z_\alpha e \sim z_\alpha f$.
  \end{enumerate}
  For $\alpha=0$, set $z_0=0$ and $e_0=e$. Now, suppose that
  $z_\beta$ and $e_\beta$ have already been constructed for
  all $\beta<\alpha$. 
  Notice by (b) that the $\cover{e_\beta}$ form a decreasing sequence
  and that 
  \[
    y := \bigwedge_{\beta < \alpha} \cover{e_\beta} = 1 - \sum_{\beta < \alpha} z_\beta.
  \]
  Condition (e) and $e \not\sim f$ guarantee that this central projection $y$ is nonzero.
  For each $\beta$, condition (d) together with $y \leq \cover{e_\beta}$
  give $ye_\beta \sim ye$; notice $ye \neq 0$ because $\cover{e} = 1$.
  Set
  \[
    p= y \sum_{\beta<\alpha} e_\beta = \sum_{\beta < \alpha} ye_\beta.
  \]
  Because $\card \alpha < \d(e)$ (as $\alpha$ is strictly below the initial
  ordinal $\d(e)$), Lemma~\ref{lem:alphasquared}(b) implies that $p
  \sim ye$. So
  $p$ is properly infinite and $\cover{p} = \cover{ye} = y = 1 - \sum_{\beta < \alpha} z_\beta$.
  Applying Lemma~\ref{lem:eunderf} to $p$ and $q=f$ now gives projections $z_\alpha = z$ and
  $e_\alpha=p'$ with the following properties.
  \begin{itemize}
    \item[(a)--(c)] These follow just as conditions (a)--(c) in the proof of Theorem~\ref{thm:cilin1}.
    \item[(d)] Next, the construction of $e_\alpha$ along with $\cover{e_\alpha} \leq \cover{p} = y$
      and $p \sim ye$ shows that $e_\alpha \sim \cover{e_\alpha} p \sim \cover{e_\alpha} e$.
    \item[(e)] Finally, $z_\alpha$ is chosen so that $z_\alpha p \sim z_\alpha f$. Combined with
    $z_\alpha \leq y = \cover{p}$ and $p \sim ye$, we have $z_\alpha f \sim z_\alpha p \sim z_\alpha e$.
  \end{itemize}
  Transfinite induction now gives us the desired projections
  $z_\alpha,e_\alpha$ for $\alpha<\d(e)$.

  Set $z = \sum_{\alpha<\d(e)} z_\alpha$, so that $zf \sim ze$ by~(e) and
  $1-z = \bigwedge_{\alpha < \d(e)} \cover{e_\alpha}$ by~(b).
  Since $e \not\sim f$, we must have $1-z>0$.
  Also~(d) implies that $(1-z)e_\alpha \sim (1-z)e > 0$ for all $\alpha<\d(e)$.
  Furthermore, as each $\cover{(1-z)e_\alpha} = (1-z) = \cover{(1-z)f}$, we
  have $\cover{\sum (1-z)e_\alpha} = 1-z = \cover{(1-z)f}$. Therefore 
  \begin{align*}
    \d(e)
    & < \d \left( \sum\nolimits_{\alpha<\d(e)} (1-z)e_\alpha \right) 
       \eqcomment{by Lemma~\ref{lem:alphasquared}(a)} \\
    & \leq \d((1-z)f ) 
       \eqcomment{by Theorem~\ref{thm:cilin1}} \\
    & =\d(f),
       \eqcomment{$f$ is equidimensional}
  \end{align*}
  as desired.
\end{proof}

\begin{corollary}\label{cor:cilin}
  Let $e$ and $f$ be properly infinite, equidimensional projections in
  an AW*-algebra.
  Then $e \precsim f$ if and only if $\cover{e} \leq \cover{f}$ and
  $\d(e) \leq \d(f)$. Therefore $e \sim f$ if and only if
  $\cover{e} = \cover{f}$ and $\d(e)=\d(f)$.
\end{corollary}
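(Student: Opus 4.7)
The plan is to deduce the corollary from Theorems~\ref{thm:cilin1} and~\ref{thm:cilin2} together with the comparison theorem, with central-cover bookkeeping being the only real delicacy. The ``therefore'' clause is immediate once the biconditional is proved: if $\cover{e}=\cover{f}$ and $\d(e)=\d(f)$, then the biconditional supplies both $e \precsim f$ and $f \precsim e$, so Schr\"oder--Bernstein yields $e \sim f$; conversely equivalent projections manifestly share their central covers and dimensions.

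For the forward direction of the biconditional, I would assume $e \precsim f$ (discarding the trivial case $e=0$) and pick $e' \leq f$ with $e \sim e'$. Central covers are equivalence invariants, and $e'$ inherits proper infiniteness and equidimensionality from $e$ via Lemma~\ref{lem:equidimincorners}(b), so $\cover{e}=\cover{e'}\leq\cover{f}$ is free. To bound $\d(e)=\d(e')$, I would introduce $g := \cover{e'}f$. Since $0 \neq \cover{e'} \leq \cover{f}$, the projection $g$ is properly infinite (by the remarks following Lemma~\ref{lem:properlyinfinite}) and satisfies $\cover{g}=\cover{e'}$. Applying Theorem~\ref{thm:cilin1} to $e' \leq g$ gives $\d(e') \leq \d(g)$, and equidimensionality of $f$ reduces $\d(g)$ to $\d(f)$, closing this direction.

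The reverse direction is the main technical step. Assuming $\cover{e}\leq\cover{f}$ and $\d(e)\leq\d(f)$, I would invoke the comparison theorem to obtain central orthogonal $x,y,z$ with $x+y+z=1$, $xe \prec xf$, $ye \sim yf$, and $ze \succ zf$. The goal is to force $ze=0$, for then additivity of $\precsim$ assembles $e=xe+ye+ze \precsim xf+yf+zf=f$. Arguing by contradiction, I assume $ze\neq 0$ and set $z' := z\cover{e}$, so $z'\neq 0$ and $z'e=ze$; because $z' \leq \cover{e} \leq \cover{f}$, both $z'e$ and $z'f$ are properly infinite and equidimensional with common central cover $z'$. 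A short check shows $z'e \succ z'f$ strictly: $z'f \leq zf$ gives $z'f \precsim zf \precsim ze = z'e$, and if equivalence held at the ends then $ze \precsim zf$ would combine with $ze \succsim zf$ (via Schr\"oder--Bernstein) to contradict $ze \succ zf$. Now Theorem~\ref{thm:cilin2} delivers $\d(z'e) > \d(z'f)$, and equidimensionality of $e$ and $f$ collapses this to $\d(e) > \d(f)$, contradicting the hypothesis.

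The main obstacle is ensuring that the hypotheses of Theorems~\ref{thm:cilin1} and~\ref{thm:cilin2} (matching central covers of properly infinite projections) are met at the moment of application: cutting by $\cover{e'}$ in the forward direction and by $z\cover{e}$ in the reverse direction is exactly what translates the equidimensional hypothesis into usable statements on a common central support, letting the two previous theorems do the real work.
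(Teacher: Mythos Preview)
Your proof is correct and follows essentially the same route as the paper's: both directions hinge on Theorems~\ref{thm:cilin1} and~\ref{thm:cilin2} together with the comparison theorem, with equidimensionality used to pass between $\d(e)$, $\d(f)$ and the dimensions of their central cut-downs. The only cosmetic difference is that the paper first replaces $f$ by $\cover{e}f$ to reduce to the case $\cover{e}=\cover{f}$ and then argues both directions under that standing assumption, whereas you track the inequality $\cover{e}\leq\cover{f}$ throughout and cut down by $\cover{e'}$ or $z\cover{e}$ only at the moment of invoking the theorems; the two arguments are interchangeable.
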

\begin{proof}
  If $\cover{e} \leq \cover{f}$ then $\cover{e}f$ is equidimensional
  and $\d(\cover{e}f) = \d(f)$. Replacing $f$ by $\cover{e}f$, we may
  assume $\cover{e} = \cover{f}$ and prove that $e \precsim f$ if and only if
  $\d(e) \leq \d(f)$.
  
  One direction is just Theorem~\ref{thm:cilin1}. For the
  other, suppose that $\d(e) \leq \d(f)$.
  The comparison theorem gives us a central projection $z$
  satisfying $ze \precsim zf$, and $(1-z) e \succ (1-z) f$, and
  $1-z \leq \cover{e} = \cover{f}$.  If $z<1$, then $(1-z)e$ and
  $(1-z)f$ are nonzero and properly infinite, so by equidimensionality
  and Theorem~\ref{thm:cilin2} we have $\d(e) = \d((1-z) e) > \d((1-z) f) = \d(f)$,
  which contradicts the assumption $\d(e) \leq \d(f)$. Thus $z = 1$ and $e \precsim f$. 
\end{proof}

In order to make use of Corollary~\ref{cor:cilin} in an arbitrary AW*-algebra, there
must be a rich supply of equidimensional projections. This will be demonstrated in
Theorem~\ref{thm:supofequidims}, after the following preparatory lemma.

\begin{lemma}\label{lem:equidimensionalsexist}
  Let $e$ be a properly infinite projection in an AW*-algebra. Then:
  \begin{enumerate}[(a)]
  \item $e$ is equidimensional if $\dbar(ze)=\dbar(e)$ for all
    central projections $0< z \leq \cover{e}$;
  \item there exists a nonzero central projection $z \leq \cover{e}$ making
    $ze$ equidimensional.
  \end{enumerate}
\end{lemma}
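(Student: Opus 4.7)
The plan is to prove part~(a) directly by a transfinite construction and then deduce part~(b) from the contrapositive of~(a). The key reformulation for (a) is that a properly infinite projection $e$ is equidimensional if and only if $\d(e) = \dbar(e)$; the forward direction is immediate from the definition of $\dbar$ as a supremum over a constant family, and the reverse uses Lemma~\ref{lem:tau}(a) together with the inequality $\d(ze) \leq \dbar(e)$ coming from the definition of $\dbar$. Setting $\beta := \dbar(e)$, (a) reduces to showing $\d(e) = \beta$ under the hypothesis $\dbar(ze) = \beta$ for every nonzero central $z \leq \cover{e}$.

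For (a), I argue by contradiction: assume $\d(e) < \beta$, and build by transfinite recursion on $\alpha$ an orthogonal family $\{w_\alpha\}$ of nonzero central projections below $\cover{e}$ with each $\d(w_\alpha e) > \d(e)$. Given $\{w_\beta\}_{\beta<\alpha}$, set $y_\alpha = \cover{e} - \sum_{\beta<\alpha} w_\beta$; this is well-defined by completeness of the lattice of central projections, handling limits and successors uniformly. If $y_\alpha = 0$ the construction stops; otherwise the hypothesis gives $\dbar(y_\alpha e) = \beta > \d(e)$, and since a supremum strictly exceeding $\d(e)$ must be witnessed by a term, there is a central $0 < w_\alpha \leq y_\alpha$ with $\d(w_\alpha e) > \d(e)$. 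Because the $w_\alpha$ are pairwise orthogonal and nonzero inside the set $\Proj(Z(\alg))$, the recursion cannot continue through all ordinals, and so must stop at some $\gamma$ with $y_\gamma = 0$. Then $\cover{e} = \sum_{\alpha<\gamma} w_\alpha$ is an orthogonal decomposition into nonzero central projections, and Lemma~\ref{lem:tau}(c) gives $\d(e) = \min_\alpha \d(w_\alpha e)$; since the minimum is attained (cardinals are well-ordered) and each term strictly exceeds $\d(e)$, we conclude $\d(e) > \d(e)$, the desired contradiction.

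Part~(b) will follow by iterating the contrapositive of~(a). Suppose no nonzero central $z \leq \cover{e}$ makes $ze$ equidimensional; in particular $e$ itself is not equidimensional, so (a)'s contrapositive, combined with Lemma~\ref{lem:tau}(d) to turn $\neq$ into $<$, produces a nonzero central $z_1 \leq \cover{e}$ with $\dbar(z_1 e) < \dbar(e)$. Since $z_1 e$ is still properly infinite (by the remark after Lemma~\ref{lem:properlyinfinite}) and not equidimensional by hypothesis, the same reasoning gives $z_2 \leq z_1$ with $\dbar(z_2 e) < \dbar(z_1 e)$, and iterating yields an infinite strictly decreasing sequence $\dbar(e) > \dbar(z_1 e) > \dbar(z_2 e) > \cdots$ of cardinals, contradicting well-ordering. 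The main obstacle is the transfinite construction in (a): verifying that the condition $\dbar(y_\alpha e) > \d(e)$ really delivers a suitable $w_\alpha$, and that the recursion must terminate because orthogonal families of nonzero projections all lie in the set $\Proj(Z(\alg))$.
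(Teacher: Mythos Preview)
Your proof is correct and shares the paper's core ideas, though the organization differs in both parts. For (a), the paper also builds an orthogonal family $\{z_i\}$ of nonzero central projections below $\cover{e}$ with $\d(z_ie)>\d(e)$, but via Zorn's lemma rather than transfinite recursion; it then shows the complement $z=\cover{e}-\sum z_i$ is nonzero (otherwise Lemma~\ref{lem:tau}(c) would give exactly your contradiction $\d(e)>\d(e)$), computes $\d(ze)=\d(e)$ from Lemma~\ref{lem:tau}(c) applied to $\{z_i\}\cup\{z\}$, and uses the hypothesis $\dbar(ze)=\dbar(e)>\d(e)$ to extract a further $y\leq z$ contradicting maximality. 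Your recursion instead forces the family to exhaust $\cover{e}$ and reaches the contradiction directly; your preliminary reformulation that $e$ is equidimensional if and only if $\d(e)=\dbar(e)$ is a clean simplification the paper does not make explicit. For (b), the paper is more direct than your descent argument: rather than iterating the contrapositive of (a) to manufacture a strictly decreasing sequence of cardinals, it simply chooses by well-ordering a nonzero central $z\leq\cover{e}$ minimizing $\dbar(ze)$, and observes that Lemma~\ref{lem:tau}(d) together with minimality give $\dbar(ye)=\dbar(ze)$ for every nonzero central $y\leq z$, so (a) applies to $ze$ in one step. Both routes ultimately rest on the same two ingredients, Lemma~\ref{lem:tau} and the well-ordering of cardinals.
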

\begin{proof}
  To prove (a), suppose towards a contradiction that $e$ is not
  equidimensional. Then $\d(e) < \d(z_0 e)$ for some nonzero
  central $z_0 \leq \cover{e}$ by Lemma~\ref{lem:tau}(a). Zorn's lemma
  allows us to extend $\{z_0\}$ to a maximal set $\{z_i\}$ of orthogonal
  nonzero projections such that $\d(e) < \d(z_i e)$. Because
  $\d(e) < \min\{\d(z_i e)\}$, it follows from Lemma~\ref{lem:tau}(c)
  that $z = \cover{e} - \sum z_i$ is nonzero. Applying that same lemma to the
  set of projections $\{z_i\} \cup \{z\}$ with sum $\cover{e}$, we must have
  $\d(e) = \d(ze)$. Using the hypothesis, 
  \[
    \dbar(ze) = \dbar(e) = \dbar(z_0 e).
  \]
  Since $\d(e) < \d(z_0 e) \leq \dbar(z_0 e) = \dbar(ze)$, by definition
  of $\dbar(ze)$ there is a nonzero central projection $y \leq \cover{ze}
  = z$ such that $\d(ye) = \d(yze) > \d(e)$. But this contradicts the
  maximality of $\{z_i\}$. We conclude that $e$ must be equidimensional.

  As for (b): by well-ordering, there is a nonzero central projection $z \leq
  \cover{e}$ minimizing $\dbar(ze)$. Let $y \leq \cover{ze}=z$ be a nonzero
  central projection. Then it follows from Lemma~\ref{lem:tau}(d) that
  $\dbar(ye)\leq\dbar(ze)$. Therefore $\dbar(ye)=\dbar(ze)$
  by minimality of $\dbar(ze)$. Hence $ze$ is equidimensional by (a).
\end{proof}

\begin{theorem}\label{thm:supofequidims}
  Let $e$ be a properly infinite projection in an AW*-algebra $\alg$.
  \begin{enumerate}[(a)]
   \item Each infinite cardinal $\alpha \leq \dbar(e)$ allows a
     largest central projection $z_\alpha \leq \cover{e}$ 
   such that $z_\alpha e$ is $\alpha$-equidimensional. These projections are
   orthogonal for distinct $\alpha$.
   \item Letting $\alpha$ range as above, we have $\cover{e} = \sum
     z_\alpha$. 
   \end{enumerate}
  Thus $e = \sum z_\alpha e$ is a sum of equidimensional projections.
\end{theorem}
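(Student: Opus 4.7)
The plan is to prove~(a) using Zorn's lemma combined with Lemma~\ref{lem:tau}(c), and~(b) by contradiction using Lemma~\ref{lem:equidimensionalsexist}(b).

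For~(a), fix an infinite cardinal $\alpha\leq\dbar(e)$ and apply Zorn's lemma to produce a maximal orthogonal family $\{y_i\}$ of central projections in $\alg$ with $y_i\leq\cover{e}$ and each $y_ie$ being $\alpha$-equidimensional; set $z_\alpha=\sum y_i$. The decisive step is to verify that $z_\alpha e$ is itself $\alpha$-equidimensional (the case $z_\alpha=0$ being vacuous). I first note the elementary identity $\cover{pe}=p$ for central $p\leq\cover{e}$: if $p(1-\cover{pe})$ were nonzero, it would be a central projection both $\leq\cover{e}$ and $\leq 1-\cover{e}$ (since it annihilates $e$), which is absurd. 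Hence $\cover{y_ie}=y_i$ and $\cover{z_\alpha e}=z_\alpha$. Given any nonzero central $w\leq z_\alpha$, decompose $w=\sum wy_i$ orthogonally; Lemma~\ref{lem:tau}(c) gives $\d(we)=\min\{\d(wy_ie):wy_i\neq 0\}$, and each summand equals $\alpha$ because $wy_i$ is a nonzero central projection below $\cover{y_ie}=y_i$ while $y_ie$ is $\alpha$-equidimensional. Thus $z_\alpha e$ is $\alpha$-equidimensional. For maximality, if $w\leq\cover{e}$ is central with $we$ being $\alpha$-equidimensional, then $w'=w(1-z_\alpha)$ is orthogonal to every $y_i$, and when nonzero, $w'e$ inherits $\alpha$-equidimensionality from $we$ (any central $h\leq\cover{w'e}=w'$ is below $\cover{we}=w$, so $\d(he)=\alpha$), contradicting maximality of the family; thus $w\leq z_\alpha$. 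Orthogonality of $z_\alpha$ and $z_\beta$ for $\alpha\neq\beta$ is automatic: a common nonzero central subprojection $y$ would force $\d(ye)$ to equal both $\alpha$ and $\beta$.

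For~(b), set $y=\cover{e}-\sum_\alpha z_\alpha$ and suppose toward a contradiction that $y>0$. Then $ye$ is nonzero and properly infinite, by the remarks following Lemma~\ref{lem:properlyinfinite}, so Lemma~\ref{lem:equidimensionalsexist}(b) provides a nonzero central projection $z\leq\cover{ye}=y$ with $ze$ equidimensional; its dimension $\alpha:=\d(ze)$ is an infinite cardinal with $\alpha\leq\dbar(e)$ by the definition of $\dbar$. Part~(a) then forces $z\leq z_\alpha$, but $z\leq y$ is orthogonal to $z_\alpha$, so $z=0$, a contradiction. Hence $y=0$ and $\cover{e}=\sum z_\alpha$.

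The principal technical hurdle is the verification in~(a) that summing an orthogonal family of $\alpha$-equidimensional restrictions $y_ie$ again yields an $\alpha$-equidimensional projection $z_\alpha e$. Once the central-cover identity $\cover{pe}=p$ is established and Lemma~\ref{lem:tau}(c) is brought to bear, the maximality, disjointness, and contradiction arguments follow by routine bookkeeping.
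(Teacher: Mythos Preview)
Your proof is correct and follows essentially the same approach as the paper: Zorn's lemma plus Lemma~\ref{lem:tau}(c) for part~(a), and a contradiction via Lemma~\ref{lem:equidimensionalsexist}(b) for part~(b). You supply more detail than the paper does---in particular the verification of $\cover{pe}=p$ for central $p\leq\cover{e}$ and the explicit unpacking of how Lemma~\ref{lem:tau}(c) yields $\alpha$-equidimensionality of $z_\alpha e$---but the architecture is identical.
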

\begin{proof}
  Fix $\alpha$ as in part~(a). 
  Zorn's lemma produces a maximal orthogonal family $\{z_i\}$ of
  nonzero central projections $z_i \leq \cover{e}$ where each $z_i e$ 
  is $\alpha$-equidimensional. Set $z_\alpha = \sum z_i$. It is
  straightforward to verify that
  $z_\alpha e = \sum z_i e$ is $\alpha$-equidimensional using Lemma~\ref{lem:tau}(c).
  Furthermore, if $z \leq \cover{e}$ is central and $ze$ is
  $\alpha$-equidimensional, then the projection $z(\cover{e}-z_\alpha)$
  is central and orthogonal to all $\{z_i\}$. If it is nonzero then
  $z(\cover{e} - z_\alpha)e$ is $\alpha$-equidimensional. Maximality of
  $\{z_i\}$ thus requires $z(\cover{e} - z_\alpha) = 0$, or
  $z \leq z_\alpha$. 
  For cardinals $\alpha$ and $\beta$, if $z_\alpha z_\beta$ is nonzero
  then $\alpha = \d(z_\alpha z_\beta e) = \beta$. Thus $\alpha \neq \beta$
  implies $z_\alpha z_\beta = 0$. 

  For (b), assume for contradiction that $y = \cover{e} - \sum z_\alpha > 0$.
  Then $0 < ye \leq e$ with $\cover{ye} = y$. Lemma~\ref{lem:equidimensionalsexist}(b)
  provides a nonzero projection $z \leq y$ such that $ze$ is equidimensional, say
  with $\d(ze) = \beta$. But then $z \leq z_\beta \leq \sum z_\alpha$, contradicting
  that $0 < z \leq y = \cover{e} - \sum z_\alpha$. So we must have
  $\cover{e} = \sum z_\alpha$.
\end{proof}

We conclude this section by bringing our treatment more in line
with the notions of dimension in the
literature~\cite{feldman:dimension,tomiyama:dimension,goodearlwehrung}. 
Such notions are traditionally defined in terms of $\Spec(Z(\alg))$, the Gelfand
spectrum of the centre of an AW*-algebra $\alg$, rather than using central projections.
We write $\varphi$ for the canonical *-isomorphism from $Z(\alg)$
to the algebra of continuous complex-valued functions on
$\Spec(Z(A))$. Given a properly infinite projection $e \in \alg$, we
define a function $\D[e]$ from $\Spec(Z(\alg))$ to the cardinals as
follows. Let $\{z_\alpha\}$ be the family provided by the previous
theorem, with the addition of $z_0 = 1 - \cover{e}$;
then $\supp(\varphi(z_\alpha))$ are disjoint
clopens that cover $\Spec(Z(A))$ since $1 = \sum z_\alpha$.
Therefore the function from $\bigsqcup_\alpha \supp(\varphi(z_\alpha))$
to the cardinals, mapping $\supp(\varphi(z_\alpha))$ to $\alpha$,
is continuous when we put the order topology on the
cardinals~\cite[Section~X.9]{birkhoff:lattices}. Because $\sum
z_\alpha = \cover{e}$, this function is defined on a dense subset, and
hence extends to a continuous function $\D[e]$ from all of
$\Spec(Z(\alg))$ to the cardinals~\cite[Lemma~5]{tomiyama:dimension}.
It follows from the previous theorem that
\[
  \D[e](t) = \d(ze) \qquad \text{ for } t \in \supp(\varphi(z))
\]
if $ze$ is equidimensional. Write $\D[e] \leq \D[f]$ to mean that
$\D[e](t) \leq \D[f](t)$ for all $t$.
We show that properly infinite projections $e,f \in A$ are comparable
if and only if the functions $\D[e]$ and $\D[f]$ are.
Using the methods developed above, we reduce the problem to
a test of the dimension of equidimensional summands.

\begin{proposition}\label{cor:tauze}
   For properly infinite projections $e$ and $f$ in an AW*-algebra $\alg$, $e \precsim f$ if and only if $\D[e] \leq \D[f]$. 
\end{proposition}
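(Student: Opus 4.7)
My approach would be to use the decomposition of properly infinite projections into equidimensional summands from Theorem~\ref{thm:supofequidims} and reduce the statement to the cardinal-valued comparisons already established. Let $\{z^e_\alpha\}$ and $\{z^f_\beta\}$ be the central families indexing the equidimensional summands of $e$ and $f$, augmented with $z^e_0 = 1 - \cover{e}$ and $z^f_0 = 1 - \cover{f}$ so that each sums to $1$. Then $\D[e]$ takes the constant value $\alpha$ on the clopen $\supp(\varphi(z^e_\alpha))$, and similarly for $\D[f]$, so the pointwise comparison can be organized along the common refinement given by the central projections $w = z^e_\alpha z^f_\beta$.

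For the forward direction, assume $e \precsim f$, so in particular $\cover{e} \leq \cover{f}$, and fix such a $w \neq 0$. If $\alpha = 0$ there is nothing to check; otherwise $we$ is $\alpha$-equidimensional and properly infinite, and $we \precsim wf$ forces $wf \neq 0$, so $\beta \geq \aleph_0$ and $wf$ is $\beta$-equidimensional. Setting $c = \cover{we} = w\cover{e} \leq w\cover{f}$, restriction by $c$ gives $we \precsim cf$; I would then verify that $cf$ is still $\beta$-equidimensional (as a central restriction of $wf$ by a nonzero projection below $\cover{wf}$) with $\cover{cf} = c\cover{f} = c = \cover{we}$, so Theorem~\ref{thm:cilin1} delivers $\alpha = \d(we) \leq \d(cf) = \beta$. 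This establishes $\D[e] \leq \D[f]$ on the dense union $\bigsqcup \supp(\varphi(z^e_\alpha z^f_\beta))$, and continuity of both dimension functions extends the inequality to all of $\Spec(Z(\alg))$.

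For the converse, assume $\D[e] \leq \D[f]$. The comparison theorem produces orthogonal central projections $x + y + z = 1$ with $xe \prec xf$, $ye \sim yf$, and $ze \succsim zf$; by additivity of equivalence, it suffices to show $ze \precsim zf$, which combined with $ze \succsim zf$ and Schr{\"o}der--Bernstein gives $ze \sim zf$. Applying the (just proved) forward direction to $zf \precsim ze$ yields $\D[zf] \leq \D[ze]$, which on $\supp(\varphi(z))$ reads $\D[f] \leq \D[e]$; combined with the hypothesis this forces $\D[e] = \D[f]$ on $\supp(\varphi(z))$. Refining the partition, every nonzero block $w = z z^e_\alpha z^f_\beta$ must satisfy $\alpha = \beta$ and $w\cover{e} = w\cover{f}$, for otherwise the two functions would disagree somewhere in $\supp(\varphi(w))$. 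Corollary~\ref{cor:cilin} then gives $we \sim wf$ on each block, and summing by additivity yields $ze \sim zf$, as required; combining with $xe \precsim xf$ and $ye \sim yf$ gives $e \precsim f$.

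I expect the main subtlety to be the central-cover bookkeeping: Theorem~\ref{thm:cilin1} applies only when the two projections share a central cover, so in the forward direction one must descend to $c = \cover{we}$ without disturbing the $\beta$-equidimensional class of the target, and in the converse direction one must extract the equality $w\cover{e} = w\cover{f}$ cleanly from the pointwise agreement $\D[e] = \D[f]$ on $\supp(\varphi(z))$.
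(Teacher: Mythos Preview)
Your proposal is correct and follows essentially the same strategy as the paper: decompose $1$ into central projections along which $e$ and $f$ are equidimensional, reduce the pointwise inequality $\D[e]\le\D[f]$ to a cardinal comparison on each block, and invoke the equidimensional comparison results (Theorem~\ref{thm:cilin1}/Corollary~\ref{cor:cilin}) together with density and continuity. The only organisational difference is that the paper handles both implications at once by proving the block-wise equivalence $x_\alpha y_\beta e \precsim x_\alpha y_\beta f \Leftrightarrow \alpha\le\beta$ directly from Corollary~\ref{cor:cilin}, whereas your converse takes a detour through the comparison theorem and the already-proved forward direction before arriving at the very same block-wise application of Corollary~\ref{cor:cilin}; this detour is harmless but redundant, and your central-cover bookkeeping (the descent to $c=\cover{we}$) is in fact automatic here since $w\le\cover{e}\cap\cover{f}$ whenever $\alpha,\beta>0$.
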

\begin{proof}
  First, we claim that $\D[e] \leq \D[f]$ if and only if this holds on
  a dense subset. An inequality $\alpha \leq \beta$ of cardinals
  holds precisely when the equality $\beta = \max\{\alpha,\beta\} =
  \alpha \beta$ holds. Recall that a net $\{\beta_i\}$ converges to
  $\beta$ in the order topology when there are a net $\{\alpha_i\}$
  increasing to $\beta$ and a net $\{\gamma_i\}$ decreasing to
  $\beta$ such that $\alpha_i \leq \beta_i \leq \gamma_i$. It clearly makes
  cardinal multiplication continuous, and the claim follows. 

  Let $1 = \sum x_\alpha = \sum y_\beta$ be central decompositions as
  in the above discussion, so that
  $x_\alpha e$ is $\alpha$-equidimensional and $y_\beta f$ is $\beta$-equidimensional.
  Since $1 = \sum x_\alpha y_\beta$ as well, $e \precsim f$ if and only if $x_\alpha y_\beta e
  \precsim x_\alpha y_\beta f$ for all $\alpha$ and $\beta$. Furthermore, the subsets
  $K_\alpha = \supp(\varphi(x_\alpha))$ and $L_\beta = \supp(\varphi(y_\beta))$ are
  (cl)open in $X = \Spec(Z(A))$, and $\bigcup K_\alpha$, $\bigcup L_\beta$ are open and
  dense in $X$, so $\bigcup (K_\alpha \cap L_\beta) = (\bigcup K_\alpha) \cap (\bigcup L_\beta)$
  is again dense in $X$. 

  Thus it suffices to show that $x_\alpha y_\beta e \precsim x_\alpha y_\beta f$ if and only if
  $\D[e](t) \leq \D[f](t)$ for all $t \in K_\alpha \cap L_\beta$. 
  Notice that $K_\alpha \cap L_\beta = \supp(\varphi(x_\alpha y_\beta))$. We may restrict
  to the case where $x_\alpha y_\beta > 0$, whence $K_\alpha \cap L_\beta \neq \emptyset$.
  In this case, $x_\alpha e$ is $\alpha$-equidimensional and $y_\beta f$ is $\beta$-equidimensional.
  Furthermore, if $t \in K_\alpha \cap L_\beta$, then $\D[e](t) = \d(x_\alpha y_\beta e) = \alpha$
  and $\D[f](t) = \d(x_\alpha y_\beta f) = \beta$. 
  So by Corollary~\ref{cor:cilin}, $x_\alpha y_\beta e \precsim x_\alpha y_\beta f$ if and only if
  $\alpha \leq \beta$, if and only if $\D[e](t) \leq \D[f](t)$ for all $t \in K_\alpha \cap L_\beta$.
\end{proof}

Using the known dimension theory of finite
projections in AW*-algebras~\cite[Chapter~6]{berberian} and
Lemma~\ref{lem:infiniteincorners}(b), the definition of $\D[e]$ can be extended
to arbitrary projections $e$, still satisfying the property
of the previous corollary, as in~\cite{tomiyama:dimension,goodearlwehrung}.

\section{Relative comparison for AW*-algebras of infinite type}
\label{sec:comparison:infinite}

Using the results about equidimensional projections, this section
carries out the relative comparison theory for a maximal abelian subalgebra
$\masa$ of a properly infinite AW*-algebra $\alg$, as Section~\ref{sec:comparison:finite}
did for finite algebras. Once again, whenever we mention without specification
concepts such as $\sim$, $\d$, finite, infinite, abelian, equidimensional, or
central cover, we mean the corresponding concepts in $\alg$ (and not in $\masa$). 
The results below are inspired by Kadison's~\cite{kadison:diagonalization},
but are suitably adapted for algebras that need not be countably decomposable.
Throughout this section, we will freely and repeatedly apply Lemmas~\ref{lem:corners}(c)
and~\ref{lem:infiniteincorners}.

We start by considering types $\typetwo_\infty$ and $\typethree$.
The key application of the dimension theory developed in the previous two
sections occurs in the proof of the following.

\begin{proposition}\label{prop:infinitemasa}
   Let $\alg$ be an AW*-algebra with a maximal abelian subalgebra $\masa$ all of
   whose nonzero projections are properly infinite.
   \begin{enumerate}[(a)]
   \item If $\alg$ is nonzero, there are projections $0 < e \leq p$ in $\masa$ satisfying $e \sim p \sim p-e$. 
   \item There is a projection $e$ in $\masa$ with $e \sim 1 \sim 1-e$.
   \end{enumerate}
\end{proposition}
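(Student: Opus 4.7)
The strategy will be to establish~(a) first via Lemma~\ref{lem:KR6.9.19}, the comparison theorem, and the dimension theory of Sections~\ref{sec:dimension} and~\ref{sec:equidimensional}, and then deduce~(b) from~(a) via Zorn's lemma. A key auxiliary tool, used throughout, is the \emph{doubling property}: orthogonal equivalent properly infinite projections $f, g$ satisfy $f + g \sim f$. This follows from Lemma~\ref{lem:properlyinfinite}(b), which provides countable orthogonal decompositions of $f$ and of $g$ into copies of themselves, together with additivity of equivalence.

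For part~(a), first note that $\alg$ has no abelian central summand: every nonzero central projection lies in $\masa$ (as $Z(\alg) \subseteq \masa$ by maximality), so by hypothesis it is properly infinite, hence not abelian. By Theorem~\ref{thm:supofequidims}, write $1 = \sum z_\alpha$ as a sum of nonzero central $\alpha$-equidimensional projections; it suffices to produce the required pair inside any one $z_\alpha\alg$, so one may pass to this corner and assume $\alg$ is $\alpha$-equidimensional. Lemma~\ref{lem:KR6.9.19} then yields $e \in \masa$ with $\cover{e} = 1 = \cover{1-e}$ and $e \precsim 1-e$. Applying the comparison theorem to $e$ and $1-e$ produces orthogonal central $x + y + z = 1$ with $xe \prec x(1-e)$, $ye \sim y(1-e)$, and $ze \succ z(1-e)$; the relation $e \precsim 1-e$ together with Schr{\"o}der--Bernstein forces $z = 0$.

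The crux is to ensure $y > 0$. The plan here is to apply Theorem~\ref{thm:supofequidims} separately to $e$ and to $1-e$, obtaining central decompositions $1 = \sum_\beta z_\beta = \sum_\gamma z'_\gamma$ where $z_\beta e$ is $\beta$-equidimensional and $z'_\gamma(1-e)$ is $\gamma$-equidimensional. Whenever $z_\beta z'_\beta \neq 0$ for some $\beta$, the projections $z_\beta z'_\beta e$ and $z_\beta z'_\beta(1-e)$ are both $\beta$-equidimensional with common central cover $z_\beta z'_\beta$, so Corollary~\ref{cor:cilin} gives $z_\beta z'_\beta e \sim z_\beta z'_\beta(1-e)$, and this piece contributes to $y$. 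Absent such a matching $\beta$, one iterates the construction in the corner $(1-e)\alg(1-e)$, whose hypothesis is inherited; the resulting equidimensional dimensions at each level form a strictly decreasing sequence of cardinals, which by well-ordering must terminate, and the accumulated orthogonal families of equivalent projections in $\masa$ can be combined via Lemma~\ref{lem:alphasquared} to yield nonzero orthogonal $f, g \in \masa$ with $f \sim g$ in $\alg$. Setting $p := f + g \in \masa$, the doubling property then gives $f \sim p \sim p - f$, completing~(a). This step is where the main difficulty lies.

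Part~(b) will follow by applying Zorn's lemma to the collection of mutually orthogonal families $\{(E_i, F_i)\}_i$ of pairs in $\masa$ with $E_i \sim_\alg F_i$. A maximal element yields $E = \sum E_i$ and $F = \sum F_i$ in $\masa$ with $E \perp F$ and $E \sim F$ (by additivity of equivalence). If $E + F < 1$, the residue $r := 1 - E - F \in \masa$ is nonzero and properly infinite, and $r\alg r$ with $r\masa$ inherits the hypothesis; applying~(a) to it produces an orthogonal equivalent pair extending the family, contradicting maximality. Hence $E + F = 1$, and the doubling property promotes $E \sim F$ to $E \sim 1 \sim F$, so $e := E$ witnesses~(b). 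The principal obstacle is still the $y > 0$ step of~(a), which is precisely where the dimension-theoretic machinery of Sections~\ref{sec:dimension}--\ref{sec:equidimensional} is essential.
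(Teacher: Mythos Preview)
Your plan for (b) is essentially the paper's: Zorn's lemma plus part~(a) in the residual corner, followed by the doubling observation (which is Lemma~\ref{lem:KR6.9.4}(b)). That part is fine.

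The gap is in (a), specifically the termination of your iteration. After passing to an $\alpha$-equidimensional summand and producing $e$ via Lemma~\ref{lem:KR6.9.19}, note that $e \precsim 1-e$ and $1-e$ properly infinite force $1-e \sim (1-e) \vee e = 1$ (Lemma~\ref{lem:KR6.9.4}(b)), so $1-e$ is again $\alpha$-equidimensional. Hence your equidimensional decomposition of $1-e$ is trivial: $z'_\gamma = 0$ unless $\gamma = \alpha$. The ``no matching $\beta$'' case therefore means precisely that every equidimensional piece of $e$ has dimension strictly below $\alpha$. But then iterating in $(1-e)\alg(1-e)$ just reproduces an $\alpha$-equidimensional algebra, and there is no reason the new $e'$ obtained from Lemma~\ref{lem:KR6.9.19} should have smaller equidimensional pieces than $e$ did. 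No cardinal is forced to decrease, so the well-ordering argument does not go through as stated. (Iterating instead inside a $\beta$-equidimensional piece $z_\beta e$ of $e$ would give a genuinely decreasing sequence $\alpha > \beta > \cdots$ and can be made to work, but this is not what you wrote.)

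The paper avoids this difficulty by a single clean stroke: rather than making the ambient algebra equidimensional, it picks $p \in \Proj(\masa)$ minimizing $\dbar(p)$ over all nonzero projections of $\masa$. Minimality of $\dbar(p)$, together with Lemma~\ref{lem:tau}(d), forces $\dbar(zp) = \dbar(p)$ for all nonzero central $z \leq \cover{p}$, so $p$ is equidimensional by Lemma~\ref{lem:equidimensionalsexist}(a). Applying Lemma~\ref{lem:KR6.9.19} inside $p\alg p$ gives $e \leq p$ in $\masa$ with $\cover{e} = \cover{p} = \cover{p-e}$; since $e$ and $p-e$ are themselves nonzero projections in $\masa$, the same minimality argument shows they are equidimensional with $\d(e) = \d(p) = \d(p-e)$, and Corollary~\ref{cor:cilin} finishes immediately. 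The point is that minimizing $\dbar$ over $\Proj(\masa)$ controls not just $p$ but every subprojection of $p$ lying in $\masa$, which your decomposition of $1$ cannot do.
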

\begin{proof}
  For (a),
  choose a nonzero $p \in \Proj(\masa)$ such that $\dbar_{\alg}(p) \leq \dbar_{\alg}(f)$
  for all $f \in \Proj(\masa)$; this can be done by well-ordering. 
  Because $\d$ is invariant under passing to corners, $\dbar_{p \alg p}(p)$ is
  also minimal, allowing us to drop the subscript.
  It follows from minimality of $\dbar(p)$ and
  Lemma~\ref{lem:tau}(d) that $\dbar(p)=\dbar(zp)$ for all nonzero
  central projections $z \leq \cover{p}$.
  Hence Lemma~\ref{lem:equidimensionalsexist}(a) guarantees that $p$ is equidimensional.
  Next, Lemma~\ref{lem:KR6.9.19} provides a
  projection $e$ in $p\masa$ with $\cover[p\alg p]{e} = p =
  \cover[p\alg p]{p-e}$. 
  In particular, $e$, $p$, and $p - e$ have the same central cover in
  $p \alg p$, and hence  by Lemma~\ref{lem:centralcoverincorners}
  also in $\alg$.
  If $z \leq \cover[p\alg p]{e}$ is a nonzero projection in $Z(p\alg
  p)=pZ(\alg)$, then
  \[
    \dbar(ze) \leq \dbar(e) \leq \dbar(p) \leq \dbar(ze)
  \]
  by, respectively, Lemma~\ref{lem:tau}(d), Theorem~\ref{thm:cilin1},
  and minimality of $\dbar(p)$. The same inequalities with $e$
  replaced by $p - e$ hold, so $\dbar(e) = \dbar(p) = \dbar(p-e)$, and
  $e$ and $p-e$ are equidimensional.
  Thus $\d(e) = \d(p) = \d(p-e)$.
  Now $e$, $p$, and $p - e$ are equivalent by Corollary~\ref{cor:cilin}.
  
  Proceeding to (b),
  Zorn's lemma produces a maximal set $\{p_i\}$ of orthogonal nonzero
  projections in $\masa$ such that there exist projections $\{e_i\} \subseteq \masa$ with
  $e_i \leq p_i$ and $e_i \sim p_i \sim p_i - e_i$ for all
  $i$. Assume, towards a contradiction, that $\sum p_i \neq 1$; then
  $s = 1-\sum p_i \in \masa$ is nonzero. 
  By assumption, $s$ is properly infinite. Projections in $s\masa s$ are
  properly infinite in $s\alg s$ by Lemma~\ref{lem:infiniteincorners}.
  So part~(a) applies to $s\alg s$ and its maximal
  abelian subalgebra $s\masa$, giving nonzero projections $e \leq p$ in
  $s\masa$ with $e \sim p \sim p-e$. Thus we may enlarge $\{p_i\}$ with
  $p$, contradicting maximality.  

  Hence $\sum p_i = 1$. Define $e = \sum e_i$, so that $1-e = \sum (p_i - e_i)$.
  Then $e \in \masa$, and additivity of equivalence provides $e \sim 1-e \sim \sum p_i = 1$
  as desired.
\end{proof}

\begin{lemma}\label{lem:supoffinites}
  If $\masa$  is a maximal abelian subalgebra in an AW*-algebra \alg , and $$e=\bigvee \{ f \in
  \Proj(\masa) \mid f \text{ is finite (in }\alg )\},$$ then projections in
  $(1-e)\masa$ are properly infinite. 
  If $\{f_i\}$ is a maximal orthogonal family of nonzero finite projections in $C$,
  then $e = \sum f_i$.
\end{lemma}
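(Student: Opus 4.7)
My plan is to establish both parts by exploiting two standard observations: because $\masa$ is maximal abelian, the centre $Z(\alg)$ is contained in $\masa$; and any subprojection of a finite projection is finite. Both claims will then fall out by short arguments against the definition of $e$ and the maximality of $\{f_i\}$, respectively.

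For the first claim, I would argue by contradiction. Suppose some nonzero $g \in (1-e)\masa$ fails to be properly infinite. Then by Lemma~\ref{lem:properlyinfinite}(d) there is a central projection $z \in \alg$ with $zg$ nonzero and finite. Since $z \in Z(\alg) \subseteq \masa$, the product $zg$ lies in $\masa$ and is a nonzero finite projection of $\masa$, so the definition of $e$ forces $zg \leq e$. But also $zg \leq g \leq 1 - e$, yielding $zg = 0$, a contradiction.

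For the second claim, set $p = \sum f_i \in \Proj(\masa)$. Since each $f_i$ is a finite projection in $\masa$, it satisfies $f_i \leq e$, so $p \leq e$. For the reverse inequality, take any finite $h \in \Proj(\masa)$. Then $h(1-p)$ is a projection in the commutative algebra $\masa$, dominated by the finite projection $h$ and therefore itself finite, and orthogonal to every $f_i$ (as $f_i \leq p$). Maximality of $\{f_i\}$ then forces $h(1-p) = 0$, i.e.\ $h \leq p$. Taking the supremum over all finite $h \in \Proj(\masa)$ gives $e \leq p$, whence $e = \sum f_i$.

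The only delicate point is in the first claim: we need the central projection witnessing the failure of proper infiniteness to actually lie in $\masa$, which is precisely where the maximality of $\masa$ enters through $Z(\alg) \subseteq \masa$. Everything else is a routine manipulation of the defining supremum and of finiteness of subprojections.
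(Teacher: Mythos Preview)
Your proof is correct and follows essentially the same approach as the paper's: both parts use the key facts that $Z(\alg) \subseteq \masa$ (so $zg \in \masa$ for central $z$) and that subprojections of finite projections are finite, and both establish the second claim by showing every finite $h \in \Proj(\masa)$ satisfies $h(1-\sum f_i) = 0$ via maximality. The only cosmetic difference is that the paper phrases the first part as a direct verification of Lemma~\ref{lem:properlyinfinite}(d) rather than as a proof by contradiction.
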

\begin{proof}
  Let $p \in (1-e)\masa$. Then $p \in \Proj(\masa )$ with $p \perp e$. 
  So for any central projection $z \in \alg$ such that $zp > 0$, also $\masa 
  \ni zp \perp e$, making $zp$ infinite by choice of $e$. Hence $p$ is properly infinite
  by Lemma~\ref{lem:properlyinfinite}.
  
  Let $\{f_i\}$ be a maximal orthogonal family of nonzero projections in $C$ that
  are finite; such a family exists by Zorn's lemma. Clearly $\sum f_i
  \leq e$. If $f \in C$ is any finite projection, then $f(1 - \sum f_i)$ is both
  finite and orthogonal to each $f_i$. By maximality, this product is zero,
  so $f \leq \sum f_i$. By definition of $e$, this means $e \leq \sum f_i$.
\end{proof}

\begin{lemma}\label{lem:KR6.9.4}
  Let $\alg$ be an AW*-algebra and $e, f \in \Proj(\alg)$.
  \begin{enumerate}[(a)]
  \item If $p,q \in \Proj(\alg )$ satisfy $e \precsim p \perp q
    \succsim f$, then $e \vee f \precsim p+q$.
  \item If $e$ is properly infinite and $f \precsim e$, then $e \sim e \vee f$.
  \item If $e$ is properly infinite, $f$ is finite, and $\cover{f} \leq \cover{e}$,
    then $e \vee f \sim e$.
  \end{enumerate}
\end{lemma}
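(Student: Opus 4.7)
The strategy is to handle (a), (b), (c) in sequence, with each part feeding into the next. Part~(a) is a classical Kaplansky-type parallelogram computation. Part~(b) follows by a short bootstrap of (a) using a proper-infiniteness decomposition. Part~(c) then reduces to (b) via the central-cover observation already recorded in Section~\ref{sec:preliminaries}. No new dimension theory is required.

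For (a), I would start from the parallelogram law for AW*-algebras (Berberian), which supplies the equivalence $e \vee f - e \sim f - e \wedge f \leq f$. Since $f \precsim q$, transitivity gives $e \vee f - e \precsim q$. Combined with $e \precsim p$, the orthogonality $e \perp (e \vee f - e)$ matched against $p \perp q$, and additivity of $\precsim$ for orthogonal families (a direct consequence of the paper's additivity of $\sim$), I conclude $e \vee f = e + (e \vee f - e) \precsim p + q$.

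For (b), Lemma~\ref{lem:properlyinfinite}(a) lets me decompose $e = e_1 + e_2$ with $e_1 \sim e_2 \sim e$. Then $e \precsim e_1$ while $f \precsim e \sim e_2$, with $e_1 \perp e_2$. Applying part~(a) with $p = e_1$ and $q = e_2$ yields $e \vee f \precsim e_1 + e_2 = e$. The reverse $e \precsim e \vee f$ is immediate from $e \leq e \vee f$, and Schr\"oder--Bernstein finishes.

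For (c), I reduce to (b) by establishing $f \precsim e$. Since $\cover{f} \leq \cover{e}$, we have $f = \cover{e} f$. The observation recorded just before Lemma~\ref{lem:properlyinfinite} states that for a finite $f$ and an infinite $e$ with $\cover{f} \leq \cover{e}$, one has $zf \prec ze$ for every nonzero central $z \leq \cover{e}$. Taking $z = \cover{e}$ gives $f = \cover{e} f \prec \cover{e} e = e$, so $f \precsim e$, and part~(b) yields $e \vee f \sim e$. The most delicate point in the whole argument is confirming that this observation is strong enough to yield $f \precsim e$ \emph{globally} rather than only on central summands; choosing $z = \cover{e}$ handles it, since any would-be obstruction would have to live below $1 - \cover{e}$, where $f$ itself vanishes.
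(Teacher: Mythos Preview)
Your arguments for (a) and (b) are correct and essentially identical to the paper's; the paper happens to use the symmetric half of the parallelogram identity, $(e\vee f)-f \sim e - e\wedge f$, but the content is the same.

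For (c), your strategy --- reduce to (b) by proving $f \precsim e$ --- is exactly the paper's. But your justification of $f \precsim e$ has a gap. The observation before Lemma~\ref{lem:properlyinfinite} is existential, not universal: for a finite projection and a merely \emph{infinite} one it only produces \emph{some} nonzero central $z$ with the strict inequality, and the universal version you quote (``for every nonzero central $z \leq \cover{e}$'') is false in that generality. (On a direct sum $\alg_1 \oplus \alg_2$ with $\alg_1$ finite, take $e = e_1 \oplus e_2$ with $e_1$ nonzero finite and $e_2$ properly infinite, and $f = e_1 \oplus 0$; then the central $z = 1 \oplus 0$ gives $zf = ze$.) So you cannot simply plug in $z = \cover{e}$. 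What makes the conclusion true here is precisely that $e$ is \emph{properly} infinite, and the paper exploits this directly: assuming $f \not\precsim e$, the comparison theorem yields a nonzero central $z \leq \cover{f} \leq \cover{e}$ with $ze \prec zf$; then $ze$ is infinite by Lemma~\ref{lem:properlyinfinite}, while $ze \precsim zf$ with $zf$ finite forces $ze$ finite --- contradiction. Your closing parenthetical has the right shape, but the reason the obstruction is pushed below $1 - \cover{e}$ is exactly this appeal to proper infiniteness, which your argument as written never invokes.
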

\begin{proof}
  By~\cite[Theorem~13.1]{berberian}, $(e \vee f) - f \sim e - e \wedge f \precsim p$.
  Since $((e \vee f) - f)f = 0$ and $f \precsim q$, we have $e \vee f
  = (e \vee f) - f + f \precsim p+q$, establishing (a).

  We turn to (b). As $e$ is properly infinite, there is a projection $g \in \alg$ with
  $g<e$ and $e \sim g \sim e-g$.
  Then $f \precsim e \sim e-g$. Part (a) implies that $e \vee f \precsim g+e - g = e$.
  Since $e \leq e \vee f$, we have $e \sim e \vee f$ from Schr\"{o}der-Bernstein. 
  
  Toward (c), assume for contradiction that $f \not\precsim e$. 
  The comparison theorem now gives a nonzero
  central projection $z \leq \cover{f}$ with $ze \prec zf$. Because $z \leq \cover{f}
  \leq \cover{e}$, it follows from Lemma~\ref{lem:properlyinfinite} that
  $ze$ is (properly) infinite. This contradicts finiteness of $zf$, so
  $f \precsim e$. From part~(b) we conclude that $e \vee f \sim e$.
\end{proof}

\begin{proposition}\label{prop:infinitenotypeone}
  Let $\alg$ be a properly infinite AW*-algebra without central summands
  of type I, and let $\masa$ be a maximal abelian subalgebra of $A$. Then
  there exists a projection $e \in \masa$ such that $e \sim 1 \sim 1-e$.
\end{proposition}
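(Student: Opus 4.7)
The plan is to split $1 = F + (1-F)$, where $F$ is the supremum of the finite projections of $\masa$, and to build the desired $e$ by halving each summand within $\masa$. Lemma~\ref{lem:supoffinites} expresses $F = \sum_i f_i$ for a maximal orthogonal family of finite projections in $\masa$, and guarantees that every nonzero projection in $(1-F)\masa$ is properly infinite.

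First I would dispose of the infinite part by applying Proposition~\ref{prop:infinitemasa}(b) inside the corner $(1-F)\alg(1-F)$. That corner is itself properly infinite, since its nonzero central projections are of the form $z(1-F)$ for central $z \in \alg$, lie in $(1-F)\masa$, and are therefore properly infinite in $\alg$; moreover $(1-F)\masa$ is a maximal abelian subalgebra of it (Lemma~\ref{lem:corners}(c)) with the same property. This yields $e'' \in (1-F)\masa$ with $e'' \sim (1-F) - e''$ in $\alg$ (via Lemma~\ref{lem:corners}(b)); take $e'' = 0$ when $1-F = 0$. Next I would handle the finite part: each $f_i \alg f_i$ is finite by Lemma~\ref{lem:KR6.9.16}(a), and it contains no abelian projections since $\alg$ does not (being properly infinite and without type~$\typeone$ summand). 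Hence each $f_i \alg f_i$ is of type~$\typetwo_1$. Proposition~\ref{prop:KR6.9.27}(d) with $n = 2$, applied to the maximal abelian subalgebra $f_i \masa$, halves $f_i$ as $f_{i,1} + f_{i,2}$ with $f_{i,1} \sim f_{i,2}$; summing, additivity of equivalence gives $e' := \sum_i f_{i,1} \in \masa$ with $e' \sim F - e'$.

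Setting $e = e' + e''$, both $e$ and $1-e$ lie in $\masa$, they sum to $1$, and $e \sim 1-e$ by additivity. To upgrade this to $e \sim 1$, I would show that $e$ is properly infinite and then apply Lemma~\ref{lem:KR6.9.4}(b) with $f = 1-e$: since $1-e \sim e$ yields $1-e \precsim e$, that lemma gives $e \sim e \vee (1-e) = 1$. Proper infiniteness is verified via Lemma~\ref{lem:properlyinfinite}(d): for a nonzero central projection $z$ with $ze \neq 0$, centrality preserves equivalence, so $ze \sim z(1-e)$ and $ze + z(1-e) = z$; since $z$ is infinite in the properly infinite $\alg$ while the orthogonal sum of two finite projections in an AW*-algebra is finite, $ze$ cannot be finite.

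The main obstacle will be this last step. By construction, $e$ may carry a substantial finite component contributed by $e'$, so its proper infiniteness is not automatic from that of $\alg$; it rests on the standard AW*-algebraic fact that a sum of two orthogonal finite projections is finite. Without this, the construction would only deliver two orthogonal equivalent summands of $1$ in $\masa$ but not the full equivalence $e \sim 1$. The isolation of the finite part via Lemma~\ref{lem:supoffinites} and its halving via Proposition~\ref{prop:KR6.9.27}(d) are what make the final argument go through.
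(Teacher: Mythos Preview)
Your argument is correct and follows essentially the same route as the paper: split $1$ into the supremum $F$ of the finite projections of $\masa$ and its complement via Lemma~\ref{lem:supoffinites}, halve each $f_i$ using Proposition~\ref{prop:KR6.9.27}(d) and halve $1-F$ using Proposition~\ref{prop:infinitemasa}(b), then upgrade $e \sim 1-e$ to $e \sim 1$ by showing $e$ is properly infinite (if $ze$ were finite then $z = ze + z(1-e)$ would be finite) and invoking Lemma~\ref{lem:KR6.9.4}(b). Your extra justifications---that each $f_i\alg f_i$ is of type~$\typetwo_1$ because $\alg$ has no abelian projections, and that the sum of two orthogonal finite projections is finite---fill in steps the paper leaves implicit; the verification that the corner $(1-F)\alg(1-F)$ is properly infinite is not actually needed, since Proposition~\ref{prop:infinitemasa} only requires the hypothesis on the projections of $(1-F)\masa$.
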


\begin{proof}
  First we will produce $e \in C$ such that $e \sim 1-e$. Use Zorn's
  lemma to produce a maximal family $\{f_i\}$ of projections in $\masa$
  that are finite in $\alg$, and set $f=\sum f_i$. By Proposition~\ref{prop:KR6.9.27},
  there exist projections $e_{i1} \sim e_{i2}$ for all $i$ such that $f_i
  = e_{i1} + e_{i2}$. By Lemma~\ref{lem:supoffinites}, the projections
  of $(1-f)C$ are properly infinite. As $(1-f)C$ is a maximal abelian subalgebra
  of $(1-f)A(1-f)$, Proposition~\ref{prop:infinitemasa}
  provides projections $e'_1 \sim e'_2$ such that $(1-f) = e'_1 + e'_2$.
  Thus for $j = 1,2$, the projections $e_j = e'_j + \sum_i e_{ij}$ satisfy
  $e_1 \sim e_2$ and $1 = e_1 + e_2$. So we may take $e = e_1$.
  
  It remains to show that $e_1 \sim 1 \sim e_2$. Let $z$ be any central
  projection of $\alg$ such that $ze_1$ is finite; then $ze_2 \sim ze_1$ is
  finite, so that $z = ze_1 + ze_2$ is finite. But $\alg$ is properly infinite,
  so $z$ must be zero. Thus $e_1$ is properly infinite by
  Lemma~\ref{lem:properlyinfinite}(d). Since $e_2 \sim e_1$, it
  follows from Lemma~\ref{lem:KR6.9.4}(b) that $e_1 \sim e_1 + e_2 = 1$,
  so that $e_2 \sim e_1 \sim 1$ as desired.
\end{proof}

Next, we turn to AW*-algebras of type $\typeone_\infty$. 

\begin{lemma}\label{lem:KR6.9.24}
  Let $\alg$ be a nonzero AW*-algebra of type $\typeone_\infty$, and let
  $\masa$ be a maximal abelian subalgebra in which $1$ is the supremum
  of projections in $\masa$ finite in $\alg$.
  \begin{enumerate}[(a)]
  \item $\masa$ has a projection finite in $\alg$ with central cover $1$.
  \item $\masa$ has a projection abelian in $\alg$ with central cover $1$.
  \item Some nonzero central projection $z \in \alg$ is the sum of infinitely many 
    orthogonal equivalent projections in $\masa$.
  \item There is a projection $e \in \masa$ such that $e \sim 1 \sim 1-e$.
  \end{enumerate}
\end{lemma}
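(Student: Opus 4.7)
For part~(a), I would apply Zorn's lemma to orthogonal families $\{f_i\}$ of nonzero finite projections in $\masa$ with pairwise orthogonal central covers. A maximal such family has $\sum\cover{f_i}=1$: otherwise $z := 1 - \sum\cover{f_i}$ is a nonzero central projection, and $z \in Z(\alg) \subseteq \masa$, so the hypothesis produces a finite $g \in \masa$ with $zg > 0$; then $zg \in \masa$ is finite with central cover $z\cover{g}$ orthogonal to every $\cover{f_i}$, contradicting maximality. The projection $f := \sum f_i$ is then finite by~\cite[Proposition~15.8]{berberian} with $\cover{f}=1$.

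For part~(b), apply~(a) to produce a finite $f \in \masa$ with $\cover{f}=1$. The corner $f\alg f$ is type~$\typeone$ and finite by Lemma~\ref{lem:KR6.9.16}, so it has no infinite central summand; moreover $f\masa$ is maximal abelian in $f\alg f$ by Lemma~\ref{lem:corners}(c). Thus Lemma~\ref{lem:KR6.9.21}(b) yields an abelian $e \in f\masa \subseteq \masa$ with $\cover[f\alg f]{e}=f$, and Lemma~\ref{lem:centralcoverincorners} upgrades this to $\cover[\alg]{e} \cdot f = f$, so $\cover[\alg]{e} \geq \cover{f} = 1$.

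Part~(c) is the main new content. The plan is to iterate~(b) transfinitely. Start with abelian $e_0 \in \masa$ with $\cover{e_0}=1$ from~(b). At ordinal stage $\alpha$, set $s_\alpha := 1 - \sum_{\beta<\alpha} e_\beta$; provided $s_\alpha$ is properly infinite with $\cover{s_\alpha}=1$, the corner $s_\alpha\alg s_\alpha$ is type~$\typeone_\infty$ (Lemma~\ref{lem:KR6.9.16}(c)) and inherits the hypothesis of the lemma (since $s_\alpha f$ is finite in $\alg$ whenever $f \in \masa$ is finite, and these sup to $s_\alpha$), so~(b) applied there yields abelian $e_\alpha \in s_\alpha\masa \subseteq \masa$ with $\cover[\alg]{e_\alpha} = 1$. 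The projections $e_\beta$ are all mutually equivalent in $\alg$ by~\cite[Proposition~18.1]{berberian}. Because the $\{e_\beta\}$ are orthogonal and nonzero, the process must halt at some ordinal $\gamma$ bounded in cardinality by $\card(\Proj(\alg))^+$; halting means $\cover{s_\gamma} < 1$. Setting $z := 1 - \cover{s_\gamma}$ gives a nonzero central projection with $z s_\gamma = 0$, so $z = \sum_{\beta<\gamma} z e_\beta$ realises $z$ as the orthogonal sum in $z\masa$ of the mutually equivalent projections $\{z e_\beta\}$. The index $\gamma$ must be infinite: otherwise $z$ is dominated by a finite sum of abelian projections, hence finite, contradicting proper infiniteness of $\alg$ on the nonzero central projection $z$. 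The main obstacle is the transfinite bookkeeping—specifically, preserving proper infiniteness of $s_\alpha$ at limit ordinals where $\cover{s_\alpha}=1$, which requires showing that a finite $z s_\alpha$ would combine with the $\{z e_\beta\}_{\beta<\alpha}$ to force a contradiction with proper infiniteness, or else permit halting earlier.

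For part~(d), I would glue instances of~(c) by Zorn's lemma. Take a maximal orthogonal family $\{z_\alpha\}$ of nonzero central projections such that each $z_\alpha$ is the sum of an infinite orthogonal family $\{p^\alpha_i\}_{i \in I_\alpha}$ of mutually equivalent projections in $z_\alpha\masa$. If $u := 1 - \sum z_\alpha > 0$, then $u\alg$ is of type~$\typeone_\infty$ (type~$\typeone$ by Lemma~\ref{lem:KR6.9.16}(b) and properly infinite since $\alg$ is) and $u\masa$ inherits the hypothesis, so~(c) applied there would produce a central projection contradicting maximality; hence $\sum z_\alpha = 1$. For each $\alpha$, partition $I_\alpha = J_\alpha \sqcup K_\alpha$ into two pieces of equal infinite cardinality; additivity of equivalence then yields $z_\alpha = r_\alpha + s_\alpha$ with $r_\alpha := \sum_{J_\alpha} p^\alpha_i$ and $s_\alpha := \sum_{K_\alpha} p^\alpha_i$ in $\masa$ satisfying $r_\alpha \sim s_\alpha \sim z_\alpha$. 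Setting $e := \sum_\alpha r_\alpha \in \masa$ gives $1 - e = \sum_\alpha s_\alpha$, and additivity of equivalence once more yields $e \sim \sum z_\alpha = 1 \sim 1-e$.
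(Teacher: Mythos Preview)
Your arguments for~(a), (b), and~(d) are essentially those of the paper.

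For~(c), your transfinite construction has the same core idea as the paper's (build an orthogonal family of abelian projections in $\masa$ with central cover~$1$, then cut by the central projection where the remainder fails to have full central cover), but you have introduced an unnecessary hypothesis that creates the very obstacle you flag. You require $s_\alpha$ to be properly infinite so that Lemma~\ref{lem:KR6.9.16}(c) makes $s_\alpha \alg s_\alpha$ of type~$\typeone_\infty$, believing this is needed to apply~(b) in the corner. It is not: inspect the proof of~(b). It uses only~(a) (which needs just the sup-of-finites hypothesis, no type assumption at all) and Lemma~\ref{lem:KR6.9.21}(b) applied in the \emph{finite} corner $f\alg f$. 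Thus~(b) goes through verbatim in any corner that is merely of type~$\typeone$ and inherits the sup-of-finites hypothesis. With this observation, your recursion may continue whenever $\cover{s_\alpha}=1$ (using Lemma~\ref{lem:KR6.9.16}(b) for type~$\typeone$), so halting genuinely means $\cover{s_\gamma}<1$ and the obstacle evaporates. Your two suggested workarounds do not succeed as stated: if $\alpha$ is infinite and $zs_\alpha$ is finite and nonzero there is no contradiction with proper infiniteness of $\alg$, and absorbing $zs_\alpha$ into $ze_0$ yields a finite projection $ze_0+zs_\alpha$ that need not be equivalent to the abelian $ze_\beta$.

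The paper packages the same idea via Zorn's lemma rather than transfinite recursion: take a maximal orthogonal family $\{e_i\}\subseteq\masa$ of abelian projections with $\cover{e_i}=1$, set $e=\sum e_i$, and apply~(b) in $(1-e)\alg(1-e)$ (asserting only type~$\typeone$) to obtain an abelian projection with central cover $\cover{1-e}$ in $\alg$; maximality then forces $\cover{1-e}<1$, and $z=1-\cover{1-e}$ does the job.
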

\begin{proof}
  For (a), let $\{f_j\}$ be a family of projections in $\masa$ finite in $\alg$
  and maximal with respect to the property that $\{\cover{f_j}\}$ is
  orthogonal. If $z=\sum \cover{f_j}$ and $z<1$, then $1-z$ is a nonzero
  projection in $\masa$. If $1-z$ is orthogonal to all finite projections
  of $\alg$ in $\masa$, the supremum of these finite projections is not $1$,
  contradicting the assumption. Thus there is a projection $f_0 \in \masa$
  finite in $\alg$ with $f_0(1-z)>0$. But then we may enlarge the family
  $\{f_j\}$ with $f_0(1-z)$, contradicting maximality. 
  Then $f=\sum f_j$ is a projection in $\masa$ finite with central
  cover $1$ in $\alg$~\cite[Proposition~15.8]{berberian}. 

  Towards (b), $f\alg f$ is an AW*-algebra of type $\typeone$, and $f\masa$ is a
  maximal abelian subalgebra. From Lemma~\ref{lem:KR6.9.21}(b), $f\masa$
  contains a projection $e_0$ abelian in $f\alg f$ (and hence in $\alg$) with
  $\cover[f\alg f]{e_0}=f$. Since $\cover[\alg]{e_0} \geq \cover[f \alg f]{e_0}
  = f$ 
  and $\cover[\alg ]{f}=1$,
  also $\cover[\alg ]{e_0}=1$.  Thus $e_0 \in \masa$ is a projection abelian
  with central cover $1$ in $\alg$.

  For (c), let $\{e_i\}$ be a maximal orthogonal family of projections
  in $\masa$ that are abelian with central cover $1$ in $\alg$, and set
  $e=\sum e_i$. By~\cite[Proposition~18.1]{berberian}, the $e_i$ are
  pairwise equivalent.
  If $e<1$, then $(1-e)\alg (1-e)$ is an AW*-algebra of type $\typeone$
  in which $(1-e)\masa$ is a maximal abelian subalgebra. Moreover,
  $(1-e)$ is the supremum of projections in $(1-e)\masa$ finite in
  $(1-e)\alg (1-e)$. From part (b), $(1-e)\masa$ contains a projection
  $e_1$ abelian with central cover $1-e$ in $(1-e)\alg (1-e)$. It follows
  that $e_1$ is abelian with central cover $\cover{1-e}$ in $\alg$. If
  $\cover{1-e}=1$, we can adjoin $e_1$ to $\{e_i\}$ contradicting
  maximality. Thus $z=1-\cover{1-e}$ is nonzero. Now $z(1-e)=0$,
  so that $z = ze = \sum ze_i$ and $\{ze_i\}$ is a family of orthogonal
  equivalent projections in $\masa$ with sum $z$. Because $\alg$ is
  properly infinite and the $ze_i$ are abelian, $z$ is infinite and
  $\{ze_i\}$ cannot be a finite set; see~\cite[Theorem~17.3]{berberian}.

  For (d), let $\{z_j \mid j \in \alpha \}$ be a maximal orthogonal
  family of central projections in $\alg$ each with the property of $z$
  from (c). 
  If $0<1-\sum z_j =: z_0$, then $z_0\alg$ is an AW*-algebra of type
  $\typeone_\infty$ and $z_0\masa$ is a maximal abelian subalgebra with
  the property that $z_0$ is the supremum of projections in $z_0\masa$
  finite in $z_0\alg$. Part~(c) provides a nonzero central projection
  $z_1$ in $z_0\alg$ that is the sum of infinitely many orthogonal
  equivalent projections in $z_0\masa$. Adjoining $z_1$ to $\{z_j\}$ produces a family contradicting
  maximality of $\{z_j\}$. Hence $\sum z_j=1$.

  For each $z_j$ fix an orthogonal set $\{e_{ij} \mid i \in \alpha_j\}
  \subseteq \masa$ of equivalent projections that sum to $z_j$ for
  some infinite cardinal $\alpha_j$.
  Partition the infinite set $\{ e_{ij} \mid i \in \alpha_j \}$ into
  two subfamilies of the same cardinality, and let $f_{kj}$ be the sum
  of the $k$th subfamily for $k=1,2$. Then $z_j = f_{1j} + f_{2j}$
  and $f_{1j} \sim f_{2j} \sim z_j$ for all $j$. Set $e = \sum_j
  f_{1j}$ so that $1-e = \sum_j f_{2j}$. Then $e \sim 1-e \sim
  \sum z_j = 1$ as desired.
\end{proof}

\begin{proposition}\label{prop:infinitetypeone}
  Let $\alg$ be an AW*-algebra of type $\typeone_\infty$. For any maximal
  abelian subalgebra $\masa$ of $\alg$, there exists a projection $e \in \masa$ such that
  $e \sim 1 \sim 1 - e$.
\end{proposition}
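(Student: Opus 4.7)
The plan is to mimic the Zorn's lemma strategy of Proposition~\ref{prop:infinitemasa}(b), but with a richer case analysis to accommodate the finite projections of $\masa$ that can now occur. Let $\mathcal{S}$ denote the set of orthogonal families $\{(p_i, e_i)\}$ of pairs of projections in $\masa$ with $e_i \leq p_i$ and $e_i \sim p_i \sim p_i - e_i$, ordered by inclusion. Zorn's lemma supplies a maximal element; I set $p = \sum p_i$, $e = \sum e_i$, and $s = 1 - p \in \Proj(\masa)$.

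The first goal is to show that $s$ must be finite. If $s$ is not finite, I will extend the family and reach a contradiction. Let $f = \bigvee \{g \in \Proj(s\masa) \mid g \text{ finite in } \alg\}$. If $f < s$, Lemma~\ref{lem:supoffinites} applied to the corner $s\alg s$ shows that every nonzero projection of $(s - f)\masa$ is properly infinite, so Proposition~\ref{prop:infinitemasa}(a) produces a new halving pair in $(s-f)\masa$. Otherwise $f = s$, so $s$ is a supremum of finite projections of $s\masa$. Lemma~\ref{lem:infiniteincorners}(b) provides a central projection $t$ with $ts$ finite and $(1-t)s$ properly infinite; since $s$ is not finite, $(1-t)s > 0$. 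The corner $(1-t)s\alg(1-t)s$ is then of type $\typeone_\infty$ by Lemma~\ref{lem:KR6.9.16}(c), and $(1-t)s$ is still a supremum of finite projections of $(1-t)s\masa$ (namely the $(1-t)g$ for $g$ ranging over the finite projections of $s\masa$). Lemma~\ref{lem:KR6.9.24}(d) therefore produces a projection $e_0 \in (1-t)s\masa$ with $e_0 \sim (1-t)s \sim (1-t)s - e_0$, giving a halving pair to adjoin. Either way, maximality is contradicted.

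Hence $s$ is finite. Since $\alg$ is properly infinite, no nonzero central projection of $\alg$ is finite, so the central projection $1 - \cover{p} \leq 1 - p = s$ must vanish, giving $\cover{p} = 1$. Each $p_i$ is properly infinite (it has the proper equivalent subprojection $e_i$), so $p$ and $p - e = \sum(p_i - e_i)$ are properly infinite with central cover $1$ by Lemma~\ref{lem:tau}(b). Additivity of equivalence yields $e \sim p$ and $p - e \sim p$, while Lemma~\ref{lem:KR6.9.4}(c) absorbs the finite tail $s$ into each properly infinite piece: $1 = p + s \sim p$ and $1 - e = (p - e) + s \sim p - e$. Combining, $e \sim p \sim 1$ and $1 - e \sim p - e \sim p \sim 1$, as required.

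The main obstacle is the final sub-case, where the residual projection $s$ is finite: the $s\masa$ corner then admits no halving, so the Zorn extension strategy stalls. The resolution is to exploit the proper infiniteness of $\alg$, which forces $\cover{p} = 1$, and then to invoke Lemma~\ref{lem:KR6.9.4}(c) to absorb the finite $s$ into the properly infinite $p$ and $p - e$, closing the argument without needing to halve $s$ itself.
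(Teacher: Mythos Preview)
Your argument is correct. The key lemmas you invoke (Lemma~\ref{lem:supoffinites}, Proposition~\ref{prop:infinitemasa}(a), Lemma~\ref{lem:infiniteincorners}(b), Lemma~\ref{lem:KR6.9.24}(d), Lemma~\ref{lem:KR6.9.4}(c)) are exactly the ones the paper uses, but the organization differs.

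The paper proceeds by a direct three-piece decomposition: with $g$ the supremum of finite projections in $\masa$ and $z$ central making $zg$ finite and $(1-z)g$ properly infinite, it writes $1 = (1-g) + (1-z)g + zg$, halves the first piece via Proposition~\ref{prop:infinitemasa} and the second via Lemma~\ref{lem:KR6.9.24}(d), and then absorbs the finite remainder $zg$ using Lemma~\ref{lem:KR6.9.4}(c). Your proof instead runs a single Zorn's-lemma maximality argument for halving pairs and pushes the same two halving mechanisms into the contradiction step, so that the residual $s$ is forced to be finite and can be absorbed just as the paper absorbs $zg$. The paper's route is slightly more explicit and avoids the outer layer of Zorn's lemma; your route is more uniform and makes the ``finite residual'' emerge as a consequence of maximality rather than as an explicit summand. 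Both are equally valid, and neither gains real generality over the other.

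One small expository point: when you write ``$p$ and $p-e$ are properly infinite with central cover~$1$ by Lemma~\ref{lem:tau}(b)'', that lemma only gives proper infiniteness; the central cover claim for $p$ was established in the preceding sentence, and for $p-e$ follows from $p-e \sim p$, which you state immediately afterward. Separating these would make the logic cleaner.
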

\begin{proof}
  (We freely use 
  Lemma~\ref{lem:KR6.9.16}  throughout this proof.)
  Let $g \in \masa$ be the supremum of the finite projections in $\masa$. 
  By Lemma~\ref{lem:supoffinites} the nonzero projections in $(1-g) \masa$
  are properly infinite (in $\alg$ and hence) in $(1-g)\alg(1-g)$, so $(1-g)
  = e_1 + e_2$ for orthogonal projections $e_i \in (1-g)\masa$
  with $e_1 \sim e_2 \sim 1 - g$ by Proposition~\ref{prop:infinitemasa}.
  By Lemma~\ref{lem:infiniteincorners} there exists a central projection $z \in \alg$
  such that $zg$ is finite and $(1-z)g$ is properly infinite or zero.
  Then $(1-z)g$ is a supremum of finite projections, so
  Lemma~\ref{lem:KR6.9.24} applied to the maximal abelian subalgebra
  $(1-z)g \masa$ of $(1-z)g \alg (1-z)g$ shows that $(1-z)g = f_1 + f_2$ for
  orthogonal projections $f_i \in (1-z)g \masa$ with $f_1 \sim f_2 \sim (1-z)g$.
  In the sum of orthogonal projections
  \begin{align*}
    1 &= zg + (1-g) + (1-z)g \\
    &= zg + e_1 + f_1 + e_2 + f_2,
  \end{align*}
  set $e = zg + e_1 + f_1 \in \masa$. 
  We will prove below that $zg + (1-g) \sim 1-g$, from which it will
  follow that $(1-g) + (1-z)g \sim z g + (1-g) + (1-z)g = 1$ and thus
  $1- e \sim e \sim 1$.
  
  It remains to show that $zg + (1-g) \sim 1-g$.
  We claim that $z \leq \cover{1-g}$. To see this, note that the central
  projection $y = z (1 - \cover{1-g})$ satisfies $y \leq z$ and $y (1-g)
  = 0$. Hence $y = yg \leq zg$ is finite. Because $\alg$ is
  properly infinite, we conclude $z (1 - \cover{1-g}) = y = 0$,
  or $z \leq \cover{1-g}$. This gives the middle equality in
  \[
    \cover{zg} \leq z = \cover{z(1-g)} \leq \cover{1-g}.
  \]
  If $1-g = 0$ then $zg = 0$ and the claim is verified.
  If $1-g > 0$ then it is properly infinite, and Lemma~\ref{lem:KR6.9.4}(c)
  implies that $zg + (1-g) \sim 1 - g$.
\end{proof}

Finally, we combine the results for types $\typeone_\infty$, $\typetwo_\infty$,
and $\typethree$ to show that a maximal abelian subalgebra of any properly
infinite algebra contains an infinite set of ``diagonal matrix units''.

\begin{lemma}\label{lem:propinftwo}
  If $\masa$ is a maximal abelian subalgebra of a properly infinite
  AW*-algebra $\alg$, then there exists a projection $e \in \masa$
  such that $e \sim 1 \sim 1-e$.
\end{lemma}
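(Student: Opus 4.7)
The plan is to combine the two previous propositions via a central type decomposition. Because $\alg$ is properly infinite, standard AW*-theory writes $1 = z + z'$ as a sum of orthogonal central projections with $z\alg$ of type $\typeone_\infty$ and $z'\alg$ a properly infinite AW*-algebra with no central summand of type I (namely, the aggregate of its type $\typetwo_\infty$ and $\typethree$ parts). By maximality, $\masa$ contains $Z(\alg)$, so $z, z' \in \masa$; Lemma~\ref{lem:corners}(c) then shows that $z\masa$ and $z'\masa$ are maximal abelian subalgebras of $z\alg$ and $z'\alg$, respectively.

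I would next invoke Proposition~\ref{prop:infinitetypeone} on $z\alg$ with subalgebra $z\masa$ to obtain $e_1 \in z\masa$ satisfying $e_1 \sim z \sim z - e_1$ inside $z\alg$, and invoke Proposition~\ref{prop:infinitenotypeone} on $z'\alg$ with subalgebra $z'\masa$ to obtain $e_2 \in z'\masa$ satisfying $e_2 \sim z' \sim z' - e_2$ inside $z'\alg$. Lemma~\ref{lem:corners}(b) then promotes each of these equivalences to an equivalence in the full algebra $\alg$.

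Setting $e = e_1 + e_2 \in \masa$, the orthogonality $e_1 \perp e_2$ is automatic since $e_1 \leq z$ and $e_2 \leq z' = 1-z$. Additivity of equivalence in $\alg$ then gives $e \sim z + z' = 1$ and $1 - e = (z - e_1) + (z' - e_2) \sim z + z' = 1$, which is what we want.

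Since Propositions~\ref{prop:infinitetypeone} and~\ref{prop:infinitenotypeone} do all the substantive work, there is no genuine obstacle here; the argument is essentially bookkeeping. The only points to verify with any care are that the type $\typeone_\infty$ central projection $z$ automatically lies in $\masa$ (from maximality together with $z \in Z(\alg)$) and that $z'\alg$ remains properly infinite with no type I central summand, so that the hypotheses of Proposition~\ref{prop:infinitenotypeone} are satisfied.
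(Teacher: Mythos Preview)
Your proposal is correct and follows essentially the same route as the paper: decompose $1 = z_1 + z_2$ into a type~$\typeone$ central summand and a summand with no type~$\typeone$ part, apply Propositions~\ref{prop:infinitetypeone} and~\ref{prop:infinitenotypeone} to the respective pieces, and add. You are slightly more explicit than the paper in justifying $z, z' \in \masa$ and in invoking Lemma~\ref{lem:corners}(b) to lift equivalences from the corners, but the argument is the same.
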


\begin{proof}
  By~\cite[Theorem~15.3]{berberian}, $1 = z_1 + z_2$ is a sum of
  orthogonal central projections such that $z_1 \alg$ has type~$\typeone$
  and $z_2 \alg$ has no central summands of type~$\typeone$. Each $z_i \masa$
  is a maximal abelian subalgebra of $z_i \alg$. 
  By Propositions~\ref{prop:infinitenotypeone} and~\ref{prop:infinitetypeone}
  there are projections $f_i \in z_i \masa$ such that $f_i \sim z_i \sim z_i - f_i$.
  Then $e = f_1 + f_2 \in \masa$ satisfies $e \sim 1 \sim 1-e$.
\end{proof}

\begin{theorem}\label{thm:infinitymatrixunits}
  Let $\alg$ be a properly infinite AW*-algebra, let $\masa$ be a maximal abelian
  subalgebra of $\alg$, and let $1 \leq n \leq \aleph_0$ be a cardinal.
  Then there is a set $\{e_i\}$ of $n$ orthogonal projections in $\masa$
  such that $1 = \sum e_i$ and every $e_i \sim 1$. 
\end{theorem}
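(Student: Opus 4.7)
The plan is to reduce the statement to iterated applications of Lemma~\ref{lem:propinftwo} in successively smaller corner algebras. The tools I will use repeatedly are: if $f \in \masa$ satisfies $f \sim 1$, then $f\alg f$ is properly infinite (Lemma~\ref{lem:properlyinfinite}(c)), $f\masa$ is a maximal abelian subalgebra of $f\alg f$ (Lemma~\ref{lem:corners}(c)), and equivalences in $f\alg f$ transfer to $\alg$ (Lemma~\ref{lem:corners}(b)).

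For finite $n$, I induct on $n$; the case $n=1$ is trivial ($e_1=1$). For the step with $n \geq 2$, apply Lemma~\ref{lem:propinftwo} to obtain $f \in \masa$ with $f \sim 1 \sim 1-f$. Then $(1-f)\alg(1-f)$ is properly infinite and $(1-f)\masa$ is a maximal abelian subalgebra of it, so the inductive hypothesis yields $n-1$ orthogonal projections in $(1-f)\masa$ summing to $1-f$ and each equivalent to $1-f$ in the corner, hence also in $\alg$, where $1-f \sim 1$. Adjoining $f$ to this list produces the desired $n$ projections.

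For $n=\aleph_0$, I iterate as follows. Set $g_0 = 1$, and at stage $k \geq 1$ apply Lemma~\ref{lem:propinftwo} to the properly infinite corner $g_{k-1}\alg g_{k-1}$ with maximal abelian subalgebra $g_{k-1}\masa$ to split $g_{k-1} = f_k + g_k$ with $f_k \sim g_k \sim g_{k-1}$ in the corner, hence in $\alg$, so that every $f_k \sim 1$. The main obstacle is that the descending residuals $g_k$ need not meet at $0$, so $\sum_k f_k$ may be a proper subprojection of $1$. To handle this, let $h = 1 - \sum_{k=1}^{\infty} f_k \in \masa$ and define $e_1 := f_1 + h$ and $e_i := f_i$ for $i \geq 2$. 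These are orthogonal projections in $\masa$ summing to $1$, and the inclusion $f_1 \leq e_1 \leq 1$ combined with $f_1 \sim 1$ forces $e_1 \sim 1$ by Schr\"oder--Bernstein, while $e_i \sim 1$ for $i \geq 2$ by construction.
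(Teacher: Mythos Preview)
Your proof is correct and takes essentially the same approach as the paper's. The only difference is that the paper first reduces to the case $n = \aleph_0$ via Lemma~\ref{lem:downwardclosed} (grouping an $\aleph_0$-family into fewer sums, which remain in $\masa$), whereas you treat finite $n$ separately by induction; the core $\aleph_0$ argument---iterate Lemma~\ref{lem:propinftwo} in shrinking corners and absorb the residual into the first piece via Schr\"{o}der--Bernstein---is identical.
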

\begin{proof}
  By Lemma~\ref{lem:downwardclosed}, it suffices to consider the case
  $n = \aleph_0$.
  For all positive integers $k$, we inductively decompose $1 = e_1 + \cdots
  + e_k + f_k$ as a sum of orthogonal projections in $\masa$ where all
  $e_i \sim f_k \sim 1$. For $k = 1$ use Lemma~\ref{lem:propinftwo},
  and for the inductive step suppose we have $e_1, \dots, e_k, f_k \in \masa$
  as above. 
  Lemma~\ref{lem:propinftwo} applied to the maximal abelian subalgebra
  $f_k \masa$ of the properly infinite algebra $f_k \alg f_k$ provides
  a projection $f \in f_k \masa$ such that $f \sim f_k \sim f_k - f$.
  Thus $e_{k+1} = f$ and $f_{k+1} = f_k - f$ satisfy $e_{k+1} \sim f_{k+1}
  \sim f_k \sim 1$ and $1 = e_1 + \cdots + e_{k+1} + f_{k+1}$ as desired.
  So $\{e_i\}_{i = 1}^\infty$ is an orthogonal set of $\aleph_0$ projections
  in $\masa$ that are equivalent to~1. In case $e = \sum e_i \in \masa$ is
  not equal to $1$, we may replace $e_1$ with $e'_1 = e_1 + (1-e) \in \masa$;
  since $1 \sim e_1 \leq e'_1 \leq 1$, Schr\"{o}der--Bernstein implies
  that $e'_1 \sim 1$.
\end{proof}

Theorem~\ref{thm:infinitymatrixunits} naturally suggests the question of
how large the cardinality $n$ of a set of diagonal matrix units in
$\masa$ can become. In other words: given a maximal 
abelian subalgebra $\masa$ of a properly infinite AW*-algebra $\alg$,
for what (infinite) cardinals $n$ does there exist an orthogonal set
$\{e_i\} \subseteq \Proj(\masa)$ of cardinality $n$ such that $\sum e_i
= 1$ and each $e_i \sim 1$? 
It would be interesting to know to what extent the answer depends
upon the particular subalgebra $\masa$.

\section{Simultaneous diagonalization}\label{sec:diagonalization}

We are now ready to prove simultaneous diagonalization over arbitrary
AW*-algebras. 
Recall that if $\alg$ is an AW*-algebra, then so is $\M_n(\alg)$~\cite{berberian:matrices}.

\begin{lemma}\label{lem:KR6.9.31}
  Let $\alg$ be an AW*-algebra of type $\typeone_m$ for a positive
  integer $m$. If $1 = e_1 + \dots + e_n$
  for some equivalent projections $e_i \in \alg$, then $m$ is divisible by $n$.
\end{lemma}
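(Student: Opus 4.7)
The strategy is to refine the given decomposition $1 = e_1 + \cdots + e_n$ into a decomposition of $1$ into $nk$ equivalent abelian projections with central cover $1$ (for a suitable integer $k$), and then invoke uniqueness of the integer $m$ in the classification of type $\typeone_m$ algebras to conclude $nk = m$.

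First, I would observe that $\cover{e_i} = \cover{e_1}$ for every $i$ (equivalence preserves central covers), and since $\cover{\sum e_i} = \cover{1} = 1$, each $e_i$ has central cover $1$. Next, I would reduce to the case that $e_1 \alg e_1$ is of a single type $\typeone_k$: this corner is of type $\typeone$ by Lemma~\ref{lem:KR6.9.16}(b) and is finite (since $\alg$ of type $\typeone_m$ with $m$ finite is finite), so it splits into central summands of type $\typeone_k$. Each central projection of $e_1 \alg e_1$ has the form $p e_1$ for some central $p \in \alg$ by Lemma~\ref{lem:centralcoverincorners}, and $p\alg$ is a central summand of $\alg$, still of type $\typeone_m$, with induced decomposition $1_{p\alg} = pe_1 + \cdots + pe_n$ into equivalent projections; it suffices to show $n \mid m$ on each such summand separately.

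Assuming $e_1 \alg e_1$ is of type $\typeone_k$, I fix orthogonal equivalent abelian projections $g_1, \ldots, g_k$ in $e_1 \alg e_1$ summing to $e_1$. Each $g_j$ is abelian in $\alg$ (since $g_j \alg g_j = g_j (e_1 \alg e_1) g_j$) with central cover equal to $\cover{e_1} = 1$. Choosing partial isometries $v_i \in \alg$ with $v_i^* v_i = e_1$ and $v_i v_i^* = e_i$, I set $g_{ij} = v_i g_j v_i^*$. A straightforward check shows that the $nk$ projections $g_{ij}$ are pairwise orthogonal (orthogonality across $i$'s follows from $g_{ij} \le e_i$, and within a fixed $i$ from orthogonality of the $g_j$), all equivalent to each other (via the $v_i$ and because abelian projections in $e_1 \alg e_1$ with central cover $e_1$ are mutually equivalent), and abelian in $\alg$ with central cover $1$; moreover $\sum_{i,j} g_{ij} = \sum_i v_i e_1 v_i^* = \sum_i e_i = 1$.

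This exhibits $nk$ orthogonal equivalent abelian projections with central cover $1$ summing to the identity in the type $\typeone_m$ algebra $\alg$, forcing $nk = m$ by uniqueness of the integer index for type $\typeone_m$ (e.g.\ via Berberian's classification theorem), and therefore $n \mid m$. The main obstacle is largely organizational: the type decomposition of the corner $e_1 \alg e_1$ must be lifted coherently to a central decomposition of $\alg$ so that the integer $k$ can be treated as fixed; once this is in place, the transport of abelian pieces via the $v_i$ and the matching count argument are essentially bookkeeping.
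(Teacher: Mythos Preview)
Your argument is correct but follows a genuinely different route from the paper's. The paper proceeds by induction on $m$: it chooses inside each $e_i$ a single abelian subprojection $g_i \sim f_1$, uses finite cancellation to see that the $e_i - g_i$ remain mutually equivalent, and observes that $h = \sum (e_i - g_i)$ satisfies $h \sim f_1 + \cdots + f_{m-n}$ so that $h\alg h$ is of type $\typeone_{m-n}$; the induction hypothesis then gives $n \mid (m-n)$, hence $n \mid m$. You instead decompose each $e_i$ all at once into $k$ abelian pieces (after passing to a central summand of $\alg$ on which $e_1 \alg e_1$ is homogeneous of type $\typeone_k$), obtain $nk$ orthogonal equivalent abelian projections summing to $1$, and conclude $nk = m$ from the uniqueness of the integer index in the $\typeone_m$ classification. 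Your approach is more direct and conceptual, trading the inductive peeling for two standard structural facts (the $\typeone_k$ decomposition of a finite type~$\typeone$ corner and the uniqueness of the index); the paper's inductive argument avoids the passage to central summands but relies on repeated use of cancellation in finite algebras.
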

\begin{proof}
  We proceed by induction on $m$. The case $m = 1$ is evident.
  Let $f_1, \dots, f_m$ be orthogonal equivalent abelian projections with sum~1.
  Since $\alg$ is finite, so is each $e_i$; notice also that each $\cover{e_i} = 1$.
  So for each $1 \leq i \leq n$ there is a projection $g_i$ with $f_1
  \sim g_i \leq e_i$~\cite[Corollary 18.1]{berberian}.   
  Now the set $\{g_1, \ldots, g_n\}$ of orthogonal equivalent abelian projections
  has cardinality at most $m$~\cite[Proposition~2]{berberian}.
  The projections $e_i - g_i$ remain equivalent~\cite[Exercise~17.3]{berberian},
  and because $\sum g_i \sim (f_{m-n+1} + \cdots +f_m)$, similarly,
  the projection $h = \sum_{i=1}^n (e_i - g_i) = 1 - \sum g_i$ satisfies
  \[
    h \sim 1 - (f_{m-n+1} + \cdots +f_m) = f_1 + \cdots + f_{m-n}.  
  \]
  On the one hand, $h\alg h$ contains the $n$ orthogonal equivalent
  projections $e_i - g_i$. But on the other hand, it has type $\typeone_{m-n}$
  by the equation above. The inductive hypothesis implies that $m-n$ is divisible
  by $n$, whence $m$ is divisible by $n$.
\end{proof}

\begin{proposition}\label{prop:KR6.9.34}
  Let $\alg$ be an arbitrary AW*-algebra, and $n$ a positive integer. If $\masa$
  is a maximal abelian subalgebra of $\M_n(\alg )$, then it contains $n$ orthogonal
  projections with sum $1$ equivalent in $\M_n(\alg )$.
\end{proposition}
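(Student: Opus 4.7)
The plan is to decompose $\M_n(\alg)$ into central summands by type, handle each piece using results already assembled in the paper, and combine them by additivity of equivalence. Explicitly, use the central decomposition of the identity $1 = z^{\typetwo_1} + z^{\typeone_\infty} + z^{\typetwo_\infty} + z^{\typethree} + \sum_{m \geq 1} z_m$, where $z_m \M_n(\alg)$ is of type $\typeone_m$ for each positive integer $m$ (so $\sum_m z_m$ is the central projection whose corner is the type-$\typeone$-finite part of $\M_n(\alg)$). All of these central projections belong to $Z(\M_n(\alg)) \subseteq \masa$, and Lemma~\ref{lem:corners}(c) guarantees that for each such $z$ the corner $z \masa$ is a maximal abelian subalgebra of $z \M_n(\alg)$.

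The $\typetwo_1$ summand is handled directly by Proposition~\ref{prop:KR6.9.27}(d). Each properly infinite summand, namely those of type $\typeone_\infty$, $\typetwo_\infty$, and $\typethree$, is handled by Theorem~\ref{thm:infinitymatrixunits} applied with the finite cardinal $n$, producing $n$ orthogonal projections in the relevant corner MASA that sum to the central projection and are pairwise equivalent in the corner (hence in $\M_n(\alg)$).

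The main obstacle is the finite type $\typeone$ part, where applying Proposition~\ref{prop:KR6.9.23}(b) inside $z_m \M_n(\alg)$ requires the divisibility $n \mid m$ whenever $z_m \neq 0$. To establish this, consider the diagonal matrix units $f_1, \ldots, f_n \in \M_n(\alg)$: they form an orthogonal family of pairwise equivalent projections (via the standard permutation partial isometries) with $\sum_i f_i = 1$. Multiplying by the central projection $z_m$ yields $z_m = \sum_i z_m f_i$, again an orthogonal family of pairwise equivalent projections, and if one of them vanished then equivalence would force all to vanish and hence $z_m = 0$. So for $z_m \neq 0$ these are $n$ nonzero orthogonal equivalent projections summing to the identity of the type-$\typeone_m$ algebra $z_m \M_n(\alg)$, and Lemma~\ref{lem:KR6.9.31} gives $n \mid m$. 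Writing $m = n\ell$, Proposition~\ref{prop:KR6.9.23}(b) then supplies $n$ orthogonal equivalent projections in $z_m \masa$ summing to $z_m$.

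Having done this for every central summand, one obtains projections $e_i^{(z)} \in z \masa$ for $1 \leq i \leq n$ that are orthogonal, sum to $z$, and satisfy $e_i^{(z)} \sim e_j^{(z)}$ in $\M_n(\alg)$. Setting $e_i = \sum_z e_i^{(z)} \in \masa$ produces $n$ orthogonal projections in $\masa$ with $\sum_i e_i = 1$; additivity of equivalence, applied to the orthogonal families indexed by the central summands, then yields $e_i \sim e_j$ in $\M_n(\alg)$, completing the proof.
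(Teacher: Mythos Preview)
Your proof is correct and follows essentially the same route as the paper's: the same type decomposition, the same use of Lemma~\ref{lem:KR6.9.31} (applied via the diagonal matrix units of $\M_n(\alg)$) to force the divisibility $n \mid m$ on the finite type~$\typeone$ summands, and the same appeals to Proposition~\ref{prop:KR6.9.23}(b), Proposition~\ref{prop:KR6.9.27}(d), and Theorem~\ref{thm:infinitymatrixunits}. The only cosmetic difference is that you subdivide the properly infinite part into types $\typeone_\infty$, $\typetwo_\infty$, $\typethree$ separately, whereas the paper treats it as a single summand $z_\infty$; this is harmless since Theorem~\ref{thm:infinitymatrixunits} applies uniformly to any properly infinite AW*-algebra.
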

\begin{proof}
  From~\cite[Theorem~15.3]{berberian}
  and~\cite[Theorem~18.3]{berberian}, there are central projections
  $z_1,z_2,\ldots,z_{\text{c}}, z_\infty$ with sum $1$ such that:
  $z_m\M_n(\alg )$ is of type $\typeone_m$ for every $m \in \{1,2,\ldots\}$;
  $z_{\text{c}} \M_n(\alg)$ is of type~$\typetwo_1$; and $z_\infty \M_n(\alg)$
  is properly infinite.
  By Lemma~\ref{lem:KR6.9.31}, $z_m=0$ when $m$ is finite and not
  divisible by $n$. So $z_m > 0$ with $m$ finite implies that $m=kn$
  for some positive integer $k$, and $z_m\masa$ contains $n$ equivalent
  projections $e_{1m},\ldots,e_{nm}$ with sum $z_m$ by
  Proposition~\ref{prop:KR6.9.23}(c).
  Now, $z_{\text{c}} \masa$ contains $n$ equivalent projections
  $e_{1c},\ldots,e_{nc}$ with sum $z_{\text{c}}$ from
  Proposition~\ref{prop:KR6.9.27}, and
  $z_\infty\masa$ contains $n$ equivalent projections $e_{1\infty},\ldots,
  e_{n\infty}$ with sum $z_\infty$ from Theorem~\ref{thm:infinitymatrixunits}.
  Set $e_j=e_c + e_{j \infty} + \sum_{m=1}^\infty e_{jm}$ for
  each $j \in \{1,\ldots,n\}$, where $e_{jm}$ is defined to be $0$ if $m < \infty$
  does not divide $n$. Then $\{e_1\ldots,e_n\}$ is a set of $n$ equivalent
  projections in $\masa$ with sum $1$.
\end{proof}

\begin{lemma}\label{lem:KR6.9.14}
  Let $\alg$ be an AW*-algebra and $\{e_1,\ldots,e_n\}$,
  $\{f_1,\ldots,f_n\}$ be two finite sets of projections in $\alg$,
  both summing to 1, with $e_1\sim\cdots\sim e_n$, $f_1\sim\cdots\sim
  f_n$. Then $e_j \sim f_j$ for each $j=1,\ldots,n$. 
\end{lemma}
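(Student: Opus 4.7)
The statement reduces, by transitivity of $\sim$ together with the hypotheses $e_j \sim e_1$ and $f_j \sim f_1$, to the single equivalence $e_1 \sim f_1$. Split $\alg$ into a finite and a properly infinite central summand $1 = c + c'$ via the standard type decomposition~\cite[\S15]{berberian}. Because equivalence is additive over orthogonal families, it suffices to prove $ce_1 \sim cf_1$ and $c'e_1 \sim c'f_1$ separately.

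For the properly infinite summand, I would first check that $c'e_1$ is itself properly infinite: if some central $z \leq c'$ made $ze_1$ finite, then every $ze_i \sim ze_1$ would be finite, and hence $z = \sum_i ze_i$ would be a finite sum of orthogonal finite projections, hence finite, contradicting that $c'\alg$ is properly infinite. With $c'e_1$ properly infinite in hand, an $n$-fold application of Lemma~\ref{lem:KR6.9.4}(b) absorbs the remaining summands: inductively $c'e_1 + \cdots + c'e_k \sim c'e_1$ remains properly infinite, and $c'e_{k+1} \sim c'e_1 \precsim c'e_1 + \cdots + c'e_k$, so the lemma gives $c'e_1 + \cdots + c'e_{k+1} \sim c'e_1 + \cdots + c'e_k$. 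After $n-1$ steps this yields $c' = \sum_i c'e_i \sim c'e_1$; symmetrically $c' \sim c'f_1$, and transitivity closes this case.

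For the finite summand $c\alg$ I would apply the comparison theorem to $ce_1$ and $cf_1$ to produce orthogonal central projections $x + y + z = c$ with $xe_1 \prec xf_1$, $ye_1 \sim yf_1$, and $ze_1 \succ zf_1$; by symmetry between $e$ and $f$ it is enough to rule out $x \neq 0$. Assume $x > 0$. Pick $g_1 \leq xf_1$ with $xe_1 \sim g_1$, choose partial isometries $w_i$ implementing $xf_1 \sim xf_i$, and set $g_i = w_i g_1 w_i^* \leq xf_i$; a short check using $g_1 \leq w_i^* w_i = xf_1$ shows $g_i$ is a projection with $g_i \sim g_1 \sim xe_i$. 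Orthogonality of the $xf_i$ makes the $g_i$ orthogonal, so additivity of $\sim$ yields $\sum_i g_i \sim \sum_i xe_i = x$, while $\sum_i g_i \leq \sum_i xf_i = x$. Finiteness of $x$ (as a projection in $c\alg$) forces $\sum_i g_i = x$, which combined with $g_i \leq xf_i$ forces $g_i = xf_i$ for every $i$; in particular $xe_1 \sim g_1 = xf_1$, contradicting $xe_1 \prec xf_1$. Hence $x = 0$, and symmetrically $z = 0$, giving $ce_1 \sim cf_1$. The main obstacle is precisely this finite-summand step: one must coherently transport a single subprojection of $xf_1$ to each $xf_i$ via the partial isometries before finiteness can be exploited to squeeze equality; the properly infinite case, by contrast, collapses cleanly via the absorption Lemma~\ref{lem:KR6.9.4}(b).
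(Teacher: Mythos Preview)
Your proof is correct and follows essentially the same route as the paper's: split into a finite and a properly infinite central part, use the absorption Lemma~\ref{lem:KR6.9.4}(b) on the properly infinite summand to obtain $c'e_1 \sim c' \sim c'f_1$, and on the finite summand use comparison plus finiteness to rule out strict subequivalence. The only cosmetic differences are that the paper obtains the central split from Lemma~\ref{lem:infiniteincorners}(b) applied to $e_1$ rather than from the global type decomposition, and that it produces the $g_j$ directly from $y(1-z)e_j \prec y(1-z)f_j$ for each $j$ rather than by transporting a single $g_1$ via partial isometries.
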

\begin{proof}
  Lemma~\ref{lem:infiniteincorners}(b) gives a central projection $z \in \alg$ such
  that $ze_1$ is properly infinite or $z=0$, and $(1-z)e_1$ is
  finite. Then $(1-z)e_2\ldots,(1-z)e_n$ are also
  finite~\cite[Proposition~15.3]{berberian}. By~\cite[Theorem~17.3]{berberian},
  $\sum_{j=1}^n (1-z)e_j = 1-z$ is finite. Hence
  $(1-z)f_1,\ldots,(1-z)f_n$ are finite. If $(1-z)e_1$ is not
  equivalent to $(1-z)f_1$, there is a central projection $y$ in $\alg$
  such that either $y(1-z)e_1 \prec y(1-z)f_1$ or $y(1-z)f_1 \prec
  y(1-z)e_1$. In the former case,
  \[
    y(1-z)e_j \sim g_j < y(1-z)f_j, \qquad \text{ for }j=1,\ldots,n.
  \]
  Hence $y(1-z) \sim \sum g_j < y(1-z)$, contradicting the finiteness
  of $y(1-z)$. Thus $(1-z)e_1 \sim (1-z)f_1$.

  Suppose $z>0$. Then $ze_1$ is properly infinite and by Lemma~\ref{lem:KR6.9.4}(b),
  \[
    ze_1 \sim ze_1 + ze_2 \sim \cdots \sim \sum_{j=1}^n ze_j=z
  \]
  since $ze_1 \sim \cdots \sim ze_n$. Now $zf_1$ is properly infinite,
  for if $z_0\leq p$ is a nonzero central projection with $z_0zf_1$ is
  finite, then $\sum_{j=1}^n z_0f_j$ is finite. But $z_0ze_1$ and,
  hence, $z_0$ are infinite since $ze_1$ is properly infinite,
  contradicting finiteness of $z_0$. Since $zf_1$ is properly
  infinite, as before, $zf_1 \sim z$. Thus $ze_1 \sim zf_1$. It
  follows that $e_1 \sim f_1$.
\end{proof}

The previous lemma is known to hold for Rickart
C*-algebras by a less elementary proof~\cite[Theorem~2.7]{ara:leftright} (see also~\cite[Proposition~13.2]{berberian}). We arrive at our main theorem.

\begin{theorem}\label{thm:KR6.9.35}
  Let $\alg$ be an AW*-algebra, and let $\alg[X] \subseteq \M_n(\alg)$
  be a commuting set of normal elements. There is a unitary $u \in \M_n(\alg )$
  such that $uau^{-1}$ is diagonal for each $a \in \alg[X]$.
\end{theorem}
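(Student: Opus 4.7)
The plan is to follow Kadison's strategy: extend $\alg[X]$ to a maximal abelian subalgebra $\masa$ of $\M_n(\alg)$, extract a diagonal system of matrix units from $\masa$, and use it to construct a unitary that conjugates $\masa$ into the algebra of diagonal matrices.

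First, since $\alg[X]$ is a commuting set of normal elements, the $*$-subalgebra it generates is abelian, so Zorn's lemma applied to abelian $*$-subalgebras of $\M_n(\alg)$ containing $\alg[X]$ produces a maximal such subalgebra $\masa \supseteq \alg[X]$. Proposition~\ref{prop:KR6.9.34} yields $n$ orthogonal equivalent projections $e_1,\dots,e_n \in \masa$ with $\sum e_j = 1$. Writing $E_{ij} \in \M_n(\alg)$ for the standard matrix units, the diagonal projections $E_{11},\dots,E_{nn}$ form another family of $n$ orthogonal equivalent projections (via the off-diagonal $E_{ij}$) summing to $1$; Lemma~\ref{lem:KR6.9.14} therefore supplies equivalences $e_j \sim E_{jj}$, realized by partial isometries $u_j$ with $u_j u_j^* = E_{jj}$ and $u_j^* u_j = e_j$. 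Set $u = \sum_{j=1}^n u_j$. The orthogonality of $\{e_j\}$ and $\{E_{jj}\}$ kills all cross terms $u_i^* u_j$ and $u_i u_j^*$ with $i \ne j$, yielding $u^* u = \sum e_j = 1$ and $uu^* = \sum E_{jj} = 1$, so $u$ is unitary; moreover $u e_j u^* = u_j u_j^* = E_{jj}$ for each $j$.

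It remains to show that $u\masa u^*$ consists of diagonal matrices. That algebra is abelian and contains every $E_{jj}$, so any $a \in u\masa u^*$ commutes with each $E_{jj}$; writing $a = (a_{ij})$, the equations $E_{ii}a = a E_{ii}$ force $a_{ij} = 0$ whenever $i \ne j$, so $a$ is diagonal. Applying this to $uxu^{-1} = uxu^* \in u\masa u^*$ for each $x \in \alg[X] \subseteq \masa$ finishes the proof. The only substantial obstacle — producing $n$ orthogonal equivalent projections summing to $1$ inside an arbitrary maximal abelian subalgebra of $\M_n(\alg)$ — has been absorbed into Proposition~\ref{prop:KR6.9.34}, which in turn rested on the dimension theory and relative comparison machinery developed in the previous sections; once that input is in hand, the remaining steps are routine bookkeeping with matrix units.
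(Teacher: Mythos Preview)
Your proof is correct and follows essentially the same route as the paper's: extend $\alg[X]$ to a maximal abelian subalgebra $\masa$, invoke Proposition~\ref{prop:KR6.9.34} to get $n$ orthogonal equivalent projections in $\masa$ summing to $1$, match them to the diagonal matrix units via Lemma~\ref{lem:KR6.9.14}, and sum the resulting partial isometries into a unitary. Your write-up is slightly more explicit (justifying that the $*$-algebra generated by $\alg[X]$ is abelian, verifying the cross terms vanish so that $u$ is unitary, and spelling out the matrix computation for diagonality), but there is no substantive difference in strategy.
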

\begin{proof}
  Let $\masa$ be a maximal abelian subalgebra of $\M_n(\alg )$
  containing $\alg[X]$. By Proposition~\ref{prop:KR6.9.34}, $\masa$ contains $n$
  orthogonal equivalent projections $f_1,\ldots,f_n$ with sum $1$. Let
  $e_{jk}$ be the element of $\M_n(\alg )$ with $1$ at the $(j,k)$-entry
  and $0$ elsewhere. Then $\{e_{11},\ldots,e_{nn}\}$ is an orthogonal
  family of equivalent projections in $\M_n(\alg )$ with sum $1$. By
  Lemma~\ref{lem:KR6.9.14}, $e_{jj} \sim f_j$ for $j=1,\ldots,n$.

  Say $v_j^* v_j = f_j$ and $v_j v_j^* = e_{jj}$ with $v_j \in
  \M_n(\alg )$. Then $u=\sum_{j=1}^n v_j$ is a unitary element of
  $\M_n(\alg )$ and $uf_ju^{-1} = e_{jj}$. Since $f_j$ commutes with every
  element in $\masa \supseteq \alg[X]$, we see that $e_{jj}$ commutes with
  $uau^{-1}$ for all $a \in \alg[X]$. Thus $uau^{-1}$ is diagonal for all $a
  \in \alg[X]$.
\end{proof}

It seems natural to ask whether the converse of Theorem~\ref{thm:KR6.9.35} holds, in the
following sense. To use a term defined in Section~\ref{sec:introduction}, the above theorem
says that an AW*-algebra is simultaneously $n$-diagonalizable for all positive integers $n$.
Conversely, if a C*-algebra $\alg$ is simultaneously $n$-diagonalizable for all $n$, does it
follow that $\alg$ is an AW*-algebra? The question is especially tantalizing because Grove
and Pedersen have answered this question affirmatively in the case where $\alg$ is
commutative, even under the weaker assumption that $\alg$ is simultaneously
$n$-diagonalizable for one fixed $n \geq 2$ (see Theorem 2.1 and the ``Notes added
in proof'' of~\cite{grovepedersen:diagonalizing}).
But it seems a subtle problem to decide the issue in a fully noncommutative setting.

\section{Passing to matrix rings is functorial}\label{sec:functor}

Denote by $\cat{AWstar}$ the category of AW*-algebras and
$*$-homomorphisms between them that preserve suprema of arbitrary
sets of projections. As a consequence of our main result,
Theorem~\ref{thm:KR6.9.35}, we can now show that passing to matrix
rings is an endofunctor on this category.

\begin{lemma}\label{lem:morphisms}
  A $*$-homomorphism $f \colon \alg \to \alg[B]$ between AW*-algebras preserves
  suprema of arbitrary families of projections if and only if it
  preserves suprema of orthogonal families of projections.
\end{lemma}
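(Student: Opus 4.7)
The forward implication is immediate, since orthogonal families are particular arbitrary families. For the converse, suppose $f$ preserves suprema of orthogonal families; given an arbitrary family $\{e_i\}_{i \in I}$ of projections in $\alg$, the goal is to show $f(\bigvee e_i) = \bigvee f(e_i)$.

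The plan is to realise the left-hand supremum as an orthogonal sum. I would well-order $I$ as an ordinal $\lambda$, define the running joins $q_\alpha := \bigvee_{\beta < \alpha} e_\beta$ for $\alpha \leq \lambda$, and pass to the increments $p_\alpha := q_{\alpha+1} - q_\alpha$. These are mutually orthogonal projections in $\alg$ with $\sum_{\alpha < \lambda} p_\alpha = q_\lambda = \bigvee e_i$, so the hypothesis immediately gives $f(\bigvee e_i) = \sum f(p_\alpha)$. I would then prove by transfinite induction on $\alpha \leq \lambda$ that $f(q_\alpha) = \bigvee_{\beta < \alpha} f(e_\beta)$; taking $\alpha = \lambda$ delivers the lemma. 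Limit ordinals are handled by applying the hypothesis to $\{p_\beta\}_{\beta < \alpha}$ (whose orthogonal sum is $q_\alpha$), together with the induction hypothesis, yielding $f(q_\alpha) = \sum_{\beta < \alpha} f(p_\beta) = \bigvee_{\gamma < \alpha} f(q_\gamma) = \bigvee_{\beta < \alpha} f(e_\beta)$.

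The successor step, $f(q_\gamma \vee e_\gamma) = f(q_\gamma) \vee f(e_\gamma)$, is exactly preservation of a binary join, and this is the main obstacle. To establish it, I would invoke Kaplansky's equivalence: for any two projections $e, g$ in $\alg$, there is a partial isometry $v \in \alg$ with $v^*v = e \vee g - e$ and $vv^* = g - e \wedge g$. Applying the $*$-homomorphism $f$ produces an equivalence $f(e \vee g) - f(e) \sim f(g) - f(e \wedge g)$ in $\alg[B]$, while the analogous Kaplansky partial isometry in $\alg[B]$ yields $f(e) \vee f(g) - f(e) \sim f(g) - f(e) \wedge f(g)$. Since $f(e \wedge g) \leq f(e) \wedge f(g)$ and both target projections lie below $1 - f(e)$, a comparison-theorem argument combined with the orthogonal decomposition $e \vee g = e + (e \vee g - e)$ would force the equality $f(e \vee g) = f(e) \vee f(g)$, feeding back into the transfinite induction and completing the proof.
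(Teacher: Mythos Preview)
Your transfinite-induction scaffold is sound, but the successor step---showing that $f$ preserves \emph{binary} joins---has a genuine gap. The parallelogram law only yields \emph{equivalences}: from the partial isometry in $\alg$ you get $f(e\vee g)-f(e)\sim f(g)-f(e\wedge g)$ in $\alg[B]$, and from the one in $\alg[B]$ you get $f(e)\vee f(g)-f(e)\sim f(g)-f(e)\wedge f(g)$. Equivalence of projections is far too coarse to force the equality $f(e\vee g)=f(e)\vee f(g)$. In a type~$\typethree$ factor, for instance, any two nonzero projections are equivalent, so the comparison theorem gives no traction; the defect projection $s=f(e\vee g)-f(e)\vee f(g)$ could in principle be nonzero while remaining compatible with every equivalence you have written down. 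The phrase ``a comparison-theorem argument \ldots\ would force the equality'' is exactly where the argument breaks, and I do not see a repair along these lines.

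The paper's proof takes a different route. It uses the fact that the kernel of a $*$-homomorphism between AW*-algebras is necessarily a direct summand $z\alg$ for some central projection $z$ (Berberian, Exercise~23.8), which reduces the problem to the case where $f$ is injective; the injective case is then Berberian, Exercise~4.27. If you wish to salvage the inductive strategy, you still need an independent proof that binary joins are preserved, and that appears to require either this kernel-splitting reduction to injectivity or a direct argument about range projections (e.g.\ showing $f(RP(a))=RP(f(a))$ for $a=e+g$) that goes well beyond the parallelogram law.
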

\begin{proof}
  One direction is trivial. For the other, suppose that $f$ preserves
  suprema of orthogonal sets of projections, and let $\{p_i\}$ be an arbitrary
  family of projections in $\alg$. Then $\ker(f)=z\alg$ for
  some central projection $z\in \alg$~\cite[Exercise~23.8]{berberian}.  
  Hence $\alg  = z\alg  \oplus (1-z)\alg$. Since $p_i=zp_i+(1-z)p_i$
  for each $i$, this yields $\bigvee p_i = \bigvee zp_i + \bigvee (1-z)p_i$,
  and so $f(\bigvee p_i) = f(\bigvee zp_i) + f(\bigvee (1-z)p_i)
  = f(\bigvee (1-z)p_i)$.
  Therefore we may pass to $(1-z)A$ and assume that $\ker(f)=\{0\}$.
  The proof in the case where $f$ is injective is~\cite[Exercise~4.27]{berberian}. 
\end{proof}

\begin{theorem}\label{thm:functor}
  There is a functor $\cat{AWstar} \to \cat{AWstar}$ extending the assignment
  $\alg \mapsto \M_n(\alg)$ on objects.
\end{theorem}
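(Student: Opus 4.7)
The strategy is to define $\M_n$ on morphisms by entrywise application: given $f \colon \alg \to \alg[B]$ in $\cat{AWstar}$, let $\M_n(f)$ be the induced entrywise $*$-homomorphism $\M_n(\alg) \to \M_n(\alg[B])$. Identity and composition laws are inherited from the known functor $\M_n \colon \cat{Cstar} \to \cat{Cstar}$, and the action on objects already sends $\cat{AWstar}$ to itself by Berberian's result. The only remaining point is that $\M_n(f)$ preserves arbitrary suprema of projections, and by Lemma~\ref{lem:morphisms} it suffices to treat orthogonal families.

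Let $\{p_i\} \subseteq \M_n(\alg)$ be an orthogonal set of projections; these commute pairwise and are normal, so Theorem~\ref{thm:KR6.9.35} supplies a unitary $u \in \M_n(\alg)$ such that each $q_i := u p_i u^*$ is diagonal, say $q_i = \mathrm{diag}(a_{i,1}, \ldots, a_{i,n})$ with $a_{i,k} \in \Proj(\alg)$. Orthogonality transfers entrywise, so for each $k$ the family $\{a_{i,k}\}_i$ is orthogonal in $\alg$. Because suprema of orthogonal projections inside a corner $e_{kk}\M_n(\alg) e_{kk} \cong \alg$ coincide with their suprema in $\M_n(\alg)$, the supremum of the $q_i$ is computed coordinatewise:
\[
   \bigvee_i q_i \;=\; \mathrm{diag}\bigl(\textstyle\sum_i a_{i,1}, \ldots, \sum_i a_{i,n}\bigr).
\]
Applying $\M_n(f)$ entrywise and using that $f$ preserves suprema of orthogonal families in $\alg$ gives
\[
   \M_n(f)\bigl(\textstyle\bigvee_i q_i\bigr) \;=\; \mathrm{diag}\bigl(\textstyle\sum_i f(a_{i,1}), \ldots, \sum_i f(a_{i,n})\bigr) \;=\; \bigvee_i \M_n(f)(q_i).
\]
Since conjugation by the unitary $\M_n(f)(u) \in \M_n(\alg[B])$ is an order automorphism of the projection lattice and therefore preserves all suprema, pulling back via $p_i = u^* q_i u$ and $\bigvee_i p_i = u^* \bigl(\bigvee_i q_i\bigr) u$ yields the required identity $\M_n(f)\bigl(\bigvee_i p_i\bigr) = \bigvee_i \M_n(f)(p_i)$.

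The real content is concentrated in the appeal to simultaneous diagonalization; once that is available, the computation is reduced to the hypothesis on $f$ by the standard facts that corner inclusions and unitary conjugations behave well with respect to suprema of projections.
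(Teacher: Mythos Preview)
Your proposal is correct and follows essentially the same route as the paper: reduce to orthogonal families via Lemma~\ref{lem:morphisms}, simultaneously diagonalize using Theorem~\ref{thm:KR6.9.35}, compute the supremum entrywise on the diagonal, and transport back by unitary conjugation. You spell out a bit more detail (the corner argument for why diagonal suprema are entrywise, and the explicit use of $\M_n(f)(u)$ as a unitary), but the argument is the same in substance.
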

\begin{proof}
  It is well known that if $\alg$ is an AW*-algebra, then $\M_n(\alg)$
  is, too~\cite{berberian:matrices}, and that if $f \colon \alg \to
  \alg[B]$ is a $*$-homomorphism, then $\M_n(f) \colon \M_n(\alg) \to
  \M_n(\alg[B])$ is, too. The point is to show that $\M_n(f)$ preserves 
  suprema of projections. By Lemma~\ref{lem:morphisms} it suffices to
  show that $\M_n(f)$ preserves suprema of orthogonal families of
  projections. Let $\{p_i\}$ be an orthogonal family of projections in
  $\M_n(\alg )$. Then $\{p_i\}$ is an abelian self-adjoint subset of
  $\M_n(\alg )$. Theorem~\ref{thm:KR6.9.35} provides a unitary $u \in
  \M_n(\alg )$ making each $up_iu^{-1}$ diagonal. Now
  \begin{align*}
           \M_n(f)\left(\sum p_i\right) 
    & = \M_n(f)(u)^{-1} \cdot \M_n(f)\left(\sum up_iu^{-1}\right) \cdot \M_n(f)(u)  \\
    & = \M_n(f)(u)^{-1} \cdot \sum \M_n(f)(up_iu^{-1}) \cdot \M_n(f)(u) \\
    & = \sum \M_n(f)(p_i).
  \end{align*}
  The first and last equalities hold because $\M_n(f)$ is a
  $*$-homomorphism, and the middle equality holds because $upu^{-1}$ is
  diagonal, $f$ preserves suprema of projections, and the supremum of a set of
  diagonal projections is computed entrywise.
\end{proof}

\begin{remark}\label{rem:baer}
  The previous theorem holds unabated if we replace $*$-homomorphisms
  by $*$-ring homomorphisms. In fact, due to the algebraic nature of our
  methods, the results in Sections~\ref{sec:preliminaries}--\ref{sec:comparison:infinite}
  seem to hold for Baer $*$-rings with generalized comparability (GC) that satisfy
  the parallellogram law (P), in Berberian's terms~\cite{berberian}.
  For the results of Section~\ref{sec:diagonalization} and the proof of 
  Theorem~\ref{thm:functor} to carry through, one must further restrict to such
  $*$-rings $A$ for which $\M_n(A)$ is again such a $*$-ring (for instance, properly
  infinite Baer $*$-rings $A$, where $\M_n(A) \cong A$ for all $n$). 
  Lemma~\ref{lem:morphisms} additionally requires the ``weak existence of projections''
  (WEP) axiom of~\cite{berberian}.
\end{remark}

\appendix
\section{Achieving the dimension}
\label{app:supachieved}

This appendix discusses two special cases in which the supremum in the
definition of the dimension of properly infinite projections in an
AW*-algebra $\alg$, Definition~\ref{def:tau}, is achieved with certainty. To
be precise, fix a properly infinite projection $e \in A$, and define
\begin{align*}
  \Delta(e) 
  & = \{ \card{I} \mid \{e_i\}_{i \in I} \in \Gamma(e) \} 
  = \{ \text{cardinals } \beta \mid \beta < \d(e) \}, \\
  \delta(e) 
  & = \sup \Delta(e).
\end{align*}
The question is whether $\delta(e) \in \Delta(e)$, or equivalently, whether
$\d(e) = \alpha^+$ for some $\alpha \in \Delta(e)$.
The first special case in which we have a positive answer concerns properties of the cardinal
$\delta(e)$ itself.
Recall that a cardinal is \emph{weakly inaccessible} if it is an uncountable regular limit cardinal.

\begin{proposition}
 If the cardinal $\delta(e)$ is not weakly inaccessible, then $\delta(e) \in \Delta(e)$.
\end{proposition}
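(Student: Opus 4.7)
The plan is to prove the contrapositive: assume $\delta(e) \notin \Delta(e)$, and deduce that $\delta(e)$ is weakly inaccessible---that is, an uncountable regular limit cardinal.

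The first step is to read off basic structural properties of $\delta(e)$ from the hypothesis. By Lemma~\ref{lem:downwardclosed}, $\Delta(e)$ is downward closed, so $\delta(e) \notin \Delta(e)$ forces $\Delta(e) = \{\beta : \beta < \delta(e)\}$. Then $\delta(e)$ must be a limit cardinal, for if $\delta(e) = \alpha^+$ we would have $\sup \Delta(e) = \alpha$, contradicting $\sup \Delta(e) = \delta(e) = \alpha^+$. Uncountability comes for free: since $e$ is properly infinite, Lemma~\ref{lem:properlyinfinite}(b) yields an infinite family in $\Gamma(e)$, which places $\aleph_0$ in $\Delta(e)$ by downward closure, so $\delta(e) > \aleph_0$.

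The heart of the argument is proving regularity of $\delta(e)$ by contradiction. Suppose $\kappa := \mathrm{cf}(\delta(e)) < \delta(e)$, and fix cardinals $\alpha_i < \delta(e)$, $i \in \kappa$, cofinal in $\delta(e)$. Since $\kappa \in \Delta(e)$, there is a family $\{f_i\}_{i \in \kappa} \in \Gamma(e)$. Each $f_i \sim e$ is properly infinite with the same $\Delta$-set (invariance of $\d$ under equivalence was noted right after Definition~\ref{def:tau}), so $\alpha_i \in \Delta(f_i)$ produces $\{f_{ij}\}_{j \in \alpha_i} \in \Gamma(f_i)$ with each $f_{ij} \sim f_i \sim e$. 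Combining, $\{f_{ij} : i \in \kappa,\ j \in \alpha_i\}$ is an orthogonal family summing to $\sum_i f_i = e$, with each entry equivalent to $e$, so it lies in $\Gamma(e)$. Its cardinality is $\sum_{i \in \kappa} \alpha_i = \delta(e)$, using $\sup_i \alpha_i = \delta(e)$ and $\kappa \cdot \delta(e) = \delta(e)$ (both $\delta(e)$ is infinite and $\kappa < \delta(e)$). Hence $\delta(e) \in \Delta(e)$, the desired contradiction.

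I expect the only real care to lie in the regularity step: organising the doubly-indexed family so that it really sits in $\Gamma(e)$, and checking the cardinal arithmetic. Everything else is a short reading of definitions together with downward closure of $\Delta(e)$.
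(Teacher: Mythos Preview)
Your proof is correct and follows essentially the same route as the paper: the key construction---decomposing $e$ into $\kappa$ pieces via $\kappa \in \Delta(e)$, then each piece into $\alpha_i$ pieces, to obtain a family in $\Gamma(e)$ of cardinality $\sum_{i \in \kappa} \alpha_i = \delta(e)$---is exactly what the paper does. The only difference is organisational: you argue by contrapositive (checking uncountable, limit, regular separately), whereas the paper proceeds directly by cases on $\delta(e)$ (countable, successor, singular limit); the mathematical content is identical.
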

\begin{proof}
 Notice from Lemma~\ref{lem:alphasquared} that $\aleph_0 \in \Delta(e)$ necessarily.
 If $\delta(e)$ is either $\aleph_0$ or a successor cardinal, then from the
 definition $\delta(e) = \sup \Delta(e)$ it is clear that $\delta(e) \in \Delta(e)$.
 So we may assume that $\delta(e)$ is an uncountable limit cardinal that is
 not regular: it is strictly larger than the least cardinality of a cofinal set of
 cardinals below it.
 Let $\{\alpha_i \mid i \in \beta\}$ be a cofinal set of cardinals below
 $\delta(e)$, where $\beta<\delta$. Because $\beta<\delta(e)$, we can
 write $e=\sum_{i \in \beta} e_i$ with $e_i \sim e$. Next, we can
 also write $e_i =  \sum_{j \in \alpha_i} e_{ij}$ for each $i \in
 \beta$ with $e_{ij} \sim e_i \sim e$. Then $\{e_{ij}\} \in
 \Gamma(e)$ has cardinality $\sup \{ \alpha_i \mid i \in \beta\}$, which
 equals $\delta(e)$ by cofinality. 
\end{proof}

The answer is also positive when $\alg$ is a von Neumann algebra.
Recall that a projection is countably decomposable when any orthogonal
family of nonzero subprojections is countable;
$\alg$ is countably decomposable when $1_{\alg}$ is.

\begin{lemma}\label{lem:sumofcountables}
  Every projection $p$ in a von Neumann algebra $\alg$ can be written as
  $p=\sum p_i$ for some orthogonal family $\{p_i\}$ of countably
  decomposable projections. Hence every central projection $z$ can be
  written as $z=\sum z_i$ for an orthogonal family $\{z_i\}$ of
  central projections making each $z_i Z(\alg)$ countably decomposable.
\end{lemma}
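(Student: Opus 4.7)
The plan is to first reduce the statement to a single existence fact, then use the spatial structure of a von Neumann algebra to verify that fact. The existence fact I will prove is: \emph{every nonzero projection $p$ in a von Neumann algebra $\alg$ dominates a nonzero countably decomposable projection of $\alg$}. Granting this, the first conclusion of the lemma follows by a routine Zorn's lemma argument: among orthogonal families of nonzero countably decomposable subprojections of $p$, pick a maximal one $\{p_i\}$. Setting $q = p - \sum p_i$, if $q > 0$ then the existence fact applied to $q$ produces a nonzero countably decomposable projection below $q$ that could be added to $\{p_i\}$, contradicting maximality. Hence $p = \sum p_i$.

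For the existence fact itself, I would realize $\alg$ concretely as a von Neumann subalgebra of $B(H)$. Pick any nonzero vector $\xi \in p(H)$ and let $q$ denote the orthogonal projection of $H$ onto $\overline{\alg' \xi}$. Two verifications are needed: (i) $q$ lies in $\alg$, which follows because $\overline{\alg'\xi}$ is manifestly $\alg'$-invariant, so $q$ commutes with $\alg'$ and thus belongs to $\alg'' = \alg$; and (ii) $q \leq p$, which follows because $p \in \alg$ commutes with every $a' \in \alg'$ and $p\xi = \xi$, so $\alg'\xi \subseteq pH$. Clearly $q \neq 0$ since $\xi \in \overline{\alg'\xi}$. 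By construction $\xi$ is cyclic for $\alg'$ acting on $qH$, so it is separating for the commutant $q\alg q$ of $\alg'|_{qH}$. Consequently the vector state $\omega_\xi(a) = \langle a\xi,\xi\rangle / \|\xi\|^2$ is a faithful normal state on $q\alg q$: faithfulness follows because $\omega_\xi(a^*a) = 0$ forces $a\xi = 0$, whence $a = 0$ on $qH$ by separation.

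From faithfulness and normality of $\omega_\xi$, $q$ is countably decomposable by the standard argument: given any orthogonal family $\{r_i\}_{i \in I}$ of nonzero subprojections of $q$, the sets $I_n = \{i : \omega_\xi(r_i) > 1/n\}$ are finite (since $\sum_{i \in I_n} \omega_\xi(r_i) \leq \omega_\xi(q) \leq 1$), so $I = \bigcup_n I_n$ is countable. This completes the existence fact, and hence the first assertion.

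The second assertion follows from the first applied to the von Neumann algebra $Z(\alg)$: since $Z(\alg) = \alg \cap \alg'$ is weakly closed in $B(H)$, it is itself a von Neumann algebra, and the first part of the lemma gives a decomposition $z = \sum z_i$ with each $z_i$ countably decomposable \emph{as a projection in $Z(\alg)$}, equivalently, with $z_i Z(\alg) = z_i Z(\alg) z_i$ countably decomposable. The only minor subtlety I expect is making the verification that $q \in \alg$ airtight (the commutant trick via $\alg'$-invariance of the range); everything else is straightforward bookkeeping around Zorn's lemma and the standard faithful-state argument for countable decomposability.
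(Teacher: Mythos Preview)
Your proof is correct and is essentially the same as the paper's, which simply cites two results from Kadison--Ringrose: every projection in a von Neumann algebra is an orthogonal sum of cyclic projections (your Zorn argument plus the construction of $q = [\overline{\alg'\xi}]$), and every cyclic projection is countably decomposable (your separating-vector/faithful-state argument). You have unpacked those citations into a self-contained argument, but the underlying mechanism---cyclic projections under a concrete representation---is identical.
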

\begin{proof}
  Let $\alg$ act faithfully on a Hilbert space $H$.  Then $p =
  \sum p_i$ for an orthogonal set $\{p_i\}$ of projections 
  cyclic in $\alg$ for the action on $H$~
  \cite[Proposition~5.5.9]{kadisonringrose}.  
  But every cyclic projection in $\alg$ is countably
  decomposable~
  \cite[Proposition~5.5.15]{kadisonringrose}.
\end{proof}

\begin{lemma}\label{lem:cutcoversum}
  Let $\alg$ be a von Neumann algebra.
  \begin{enumerate}[(a)]
  \item If $\{e_i \mid i \in \alpha\} \subseteq \alg$ is an orthogonal set
    of equivalent countably decomposable nonzero projections
    for some infinite cardinal $\alpha$, then the projection $e = \sum_{i \in \alpha} e_i$ 
    is properly infinite and satisfies $\alpha = \delta(e) \in \Delta(e)$. 
  \item If $e \in \alg$ be a nonzero properly infinite projection, then there
    exists a nonzero central projection 
    $z \leq \cover{e}$ and an infinite set of projections $\{e_i \mid i \in \alpha\}$ as in~(a) such
    that $ze = \sum_{i \in \alpha} e_i$. 
  \end{enumerate}
\end{lemma}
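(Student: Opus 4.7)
Since $\{e_i\mid i\in\alpha\}$ is an orthogonal family of pairwise equivalent nonzero projections summing to $e$ with $\alpha$ infinite, Lemma~\ref{lem:alphasquared}(a) immediately yields that $e$ is properly infinite with $\alpha\in\Delta(e)$, so $\delta(e)\geq\alpha$. To finish I would show $\delta(e)\leq\alpha$ by exploiting countable decomposability: because $\alg$ is a von Neumann algebra, the corner $e_i\alg e_i$ carries a faithful normal state $\omega_i$, which extends to a normal positive functional on $\alg$ via $a\mapsto\omega_i(e_iae_i)$. For any $\{f_j\mid j\in\beta\}\in\Gamma(e)$, normality gives $\sum_j\omega_i(e_if_je_i)=\omega_i(e_i)<\infty$, so the set $S_i:=\{j:e_if_j\neq 0\}$ is countable by faithfulness. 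Since each $f_j=\sum_i e_if_j$ is nonzero, it forces $j\in S_i$ for some $i$; summing cardinalities gives $\beta\leq\alpha\cdot\aleph_0=\alpha$.

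\textbf{Part (b), setup.} I would first pass to $\cover{e}\alg$, so $\cover{e}=1$, and then use Lemma~\ref{lem:sumofcountables} to restrict further to a central summand of $\alg$ whose centre is countably decomposable; within this setting $\alg$ is properly infinite, $Z(\alg)$ is countably decomposable, and $\cover{e}=1$. A Zorn argument on orthogonal countably decomposable subprojections of $e$ with pairwise orthogonal central covers produces such a family whose central covers sum to $1$; countable decomposability of $Z(\alg)$ makes the family countable, so its sum is a countably decomposable $p_0\leq e$ with $\cover{p_0}=1$. I then use Lemma~\ref{lem:properlyinfinite}(b) to write $e=\sum_{n<\aleph_0}e_n$ with $e_n\sim e$; inside each corner $e_n\alg e_n$ (whose centre is still countably decomposable by Lemma~\ref{lem:centralcoverincorners}), I repeat the construction to obtain countably decomposable $p_n\leq e_n$ with $\cover{p_n}=1$. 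Then $p:=\sum p_n$ is countably decomposable (by the same counting argument as in part (a), applied stage-by-stage to the $p_n$) with $\cover{p}=1$, and is properly infinite because for every nonzero central $z$, the family $\{zp_n\}$ is an infinite orthogonal family of nonzero subprojections of $zp$, so Lemma~\ref{lem:properlyinfinite}(d) applies.

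\textbf{Part (b), main construction.} I would then transfinitely build an orthogonal family $\{e_i\}$ of subprojections of $e$ with $e_i\sim p$, aiming to maximise $\sum e_i$. At each stage set $f=e-\sum e_i$. If $f=0$, take $z=1$ and stop. If $\cover{f}<1$, take $z=1-\cover{f}$; then $zf=0$ and $ze=\sum ze_i$ with each $ze_i\sim zp$ countably decomposable and mutually equivalent. The main obstacle is the case $\cover{f}=1$: here the comparison theorem provides a central $w$ with $wp\precsim wf$ and $(1-w)f\precsim(1-w)p$. When $w=1$ I adjoin an $\sim p$ subprojection of $f$ to the family. When $w=0$, $f\precsim p$, and Lemma~\ref{lem:KR6.9.4}(b) (applied with the properly infinite $e_{i_0}\sim p$ and the orthogonal $f\precsim e_{i_0}$) gives $e_{i_0}+f\sim e_{i_0}\sim p$, so replacing $e_{i_0}$ by $e_{i_0}+f$ produces a family summing to $e$. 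When $0<w<1$, I pick $f_w\leq wf$ with $f_w\sim wp$, halve $e_{i_0}=e_{i_0}^{(1)}+e_{i_0}^{(2)}$ using proper infiniteness with each half $\sim p$, and observe that $e_{i_0}^{(2)}+f_w\sim e_{i_0}^{(2)}\sim p$ by Lemma~\ref{lem:KR6.9.4}(b) again; then swapping $e_{i_0}$ for the pair $\{e_{i_0}^{(1)},\,e_{i_0}^{(2)}+f_w\}$ yields a new orthogonal $\sim p$ family whose sum has strictly increased by $f_w$. Since the sums lie in the projection lattice below $e$, this transfinite iteration must terminate, placing us in one of the good cases.
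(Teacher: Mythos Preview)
Your argument for~(a) is correct and is essentially the paper's own approach made explicit: the paper invokes the same counting principle (citing Tomiyama and Kadison--Ringrose) to bound $\beta\leq\alpha$.

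For~(b) there are two genuine gaps. First, your justification that $p=\sum_n p_n$ is properly infinite is wrong: an infinite orthogonal family of nonzero subprojections of $zp$ does \emph{not} force $zp$ to be infinite. In a $\sigma$-finite type~$\typetwo_\infty$ factor with $e=1$, your Zorn construction in each corner $e_n\alg e_n$ is free to return \emph{any} nonzero projection $p_n\leq e_n$ (all projections there are countably decomposable with central cover~$1$), so choosing $\tau(p_n)=2^{-n}$ for the semifinite trace $\tau$ gives $\tau(p)<\infty$ and $p$ finite. The easy repair is to transport a single $p_0$ into each $e_n$ along the equivalence $e_n\sim e$, so that all $p_n\sim p_0$ and Lemma~\ref{lem:alphasquared}(a) applies.

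Second, your transfinite construction never specifies the family at a limit ordinal. Because the case $0<w<1$ splits and replaces an existing member rather than merely adjoining a new one, the families at successive stages are not nested, so one cannot take their union at a limit; and there is no evident reason the supremum of the partial sums should again decompose as an orthogonal sum of copies of $p$. A naive Zorn argument on families ordered by inclusion does not rescue this either, since your $0<w<1$ step produces a family with strictly larger sum that is \emph{not} a superset of the old one and hence does not contradict maximality.

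The paper sidesteps both problems by not asking $p$ to be properly infinite. It takes any countably decomposable $p$ with $\cover{p}=\cover{e}$ (via~\cite[5.5.15--16]{kadisonringrose}) and invokes the single comparison fact~\cite[Theorem~6.3.4]{kadisonringrose}: a countably decomposable projection is $\precsim$ every properly infinite projection with the same central cover. One application of Zorn then gives a maximal orthogonal $\{g_i\sim p\}$ below $e$, and the cited fact forces $e-\sum g_i$ either to fail proper infiniteness or to have central cover strictly below $\cover{e}$; each alternative yields the desired $z$ in one step (the first via Lemma~\ref{lem:KR6.9.4}(c)). This is shorter and avoids the comparison-theorem trichotomy that drives your non-monotone modifications.
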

\begin{proof}
  For (a): by Lemma~\ref{lem:alphasquared}, $e$ is
  properly infinite and $\alpha \in \Delta(e)$. On the other
  hand, if $\beta \in \Delta(e)$ 
  there is a family $\{f_j\}_{j \in \beta} \in \Gamma(e)$. Because
  $
  \sum_{i \in \alpha} e_i = e = \sum_{j \in \beta} f_j
  $
  and the $e_i$ are countably decomposable, an easy adaptation of the proofs
  of~\cite[Lemma~1]{tomiyama:dimension}
  and~\cite[Lemma~6.3.9]{kadisonringrose} 
  shows that $\beta \leq \alpha$.
  Because $\beta \in \Delta(e)$ was arbitrary, it follows that $\delta(e) \leq \alpha$.
  But $\alpha \in \Delta(e)$ further implies that $\delta(e) = \alpha \in \Delta(e)$.

  For (b): by Lemma~\ref{lem:sumofcountables}, $\cover{e}$ is a sum of central
  projections $\{z_i\}$ making each $z_iZ(\alg)$ countably
  decomposable. Passing to a direct summand, we may assume that 
  $\cover{e} Z(\alg)$ itself is countably decomposable. Then $\cover{e} =
  \cover{p}$ for some countably decomposable projection $p \in
  \alg$~\cite[Propositions~5.5.16 and~5.5.15]{kadisonringrose}. 

  Write $e = \sum_{j=1}^\infty f_j$ with $f_j \sim e$.  Notice that
  $p$ is countably decomposable, $f_j \sim e$ are properly infinite,
  and $\cover{p} = \cover{e} = \cover{f_j}$. It follows
  from~\cite[Theorem~6.3.4]{kadisonringrose} that $p
  \precsim e \sim f_j$. So there exist orthogonal $g_j \leq f_j \leq
  e$ with $g_j \sim p$ for  each $j$. Extend $\{g_j\}_{j=1}^\infty$
  via Zorn's lemma to a maximal orthogonal set of projections 
  $\{g_i \mid i \in \alpha\}$ such that $p \sim g_i \leq e$ for all $i
  \in \alpha$, where $\alpha$ is an infinite cardinal.
  Assume for contradiction that $e - \sum g_i$ is
  properly infinite with central cover $\cover{e}$. Then $p \precsim e
  - \sum g_i$
  by~\cite[Theorem~6.3.4]{kadisonringrose}.
  This allows us to enlarge the set $\{g_i\}$, contradicting maximality. 
  Therefore the situation reduces the following two cases.

  \emph{Case 1: $e - \sum g_i$ is not properly infinite.} Then there is a
  nonzero central projection $z \leq \cover{e - \sum g_i} \leq
  \cover{e}$ making $z(e - \sum g_i) > 0$ finite. Because $\cover{g_i}
  = \cover{e}$ for each $i$, it follows that $\cover{\sum g_i} =
  \cover{e} \geq \cover{e - \sum g_i}$. Note that $\sum zg_i$ is
  properly infinite by Lemma~\ref{lem:alphasquared}(a). Furthermore,
  $z(e - \sum g_i)$ and $\sum zg_i$ have central cover $z$. It follows from 
  Lemma~\ref{lem:KR6.9.4}(c) that
  \[
    ze = z\left(e - \sum g_i\right) + \sum zg_i \sim \sum zg_i.
  \]
  Thus $ze = \sum e_i$ for equivalent countably decomposable $e_i \sim
  zg_i$. 

  \emph{Case 2: $\cover{e - \sum g_i}$ is strictly below $\cover{e}$.}
  Define $z = \cover{e} -  \cover{e - \sum g_i}$. Then $0 < z
  \leq \cover{e}$, and $z(e - \sum g_i) = 0$. Thus
  $ze = \sum zg_i$, where the $e_i=zg_i$ are pairwise equivalent and
  countably decomposable. 
  %
\end{proof}

\begin{proposition}\label{prop:achievingtauwstar}
  If $e$ is properly infinite projection in a von Neumann algebra $\alg$,
  then $\delta(e) \in \Delta(e)$.
\end{proposition}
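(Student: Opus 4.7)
The plan is to invoke Lemma~\ref{lem:cutcoversum} to decompose $e$ into central summands $z_je$ each of which arises as a sum of countably decomposable equivalent orthogonal projections, and then to read off $\delta(e)$ from the minimum of the cardinalities of those decompositions.

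More precisely, I would apply Zorn's lemma to the family $S$ of orthogonal sets $\{z_j\}$ of nonzero central projections below $\cover{e}$ such that each $z_je$ admits a presentation $z_je = \sum_{i \in \alpha_j} e_{ji}$ with the $e_{ji}$ pairwise orthogonal, pairwise equivalent, countably decomposable, and nonzero, for some infinite cardinal $\alpha_j$; Lemma~\ref{lem:cutcoversum}(b) guarantees that such a single $z$ exists, so $S$ is nonempty, and ordering $S$ by inclusion gives a maximal element $\{z_j\}$. Setting $z = \sum z_j \leq \cover{e}$, if $z < \cover{e}$ then $(\cover{e}-z)e$ is a nonzero properly infinite projection with central cover $\cover{e}-z$, so another application of Lemma~\ref{lem:cutcoversum}(b) to it produces a nonzero central $z' \leq \cover{e}-z$ that could be adjoined to $\{z_j\}$, contradicting maximality. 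Hence $\sum z_j = \cover{e}$ and $e = \sum z_je$.

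By Lemma~\ref{lem:cutcoversum}(a), each $\alpha_j = \delta(z_je) \in \Delta(z_je)$, so $\d(z_je) = \alpha_j^+$. Set $\alpha = \min_j \alpha_j$, which exists since cardinals are well-ordered and is infinite since each $\alpha_j$ is. By Lemma~\ref{lem:tau}(c), $\d(e) = \min_j \d(z_je) = \min_j \alpha_j^+ = \alpha^+$; hence $\delta(e) < \alpha^+$, i.e.\ $\delta(e) \leq \alpha$. To obtain the reverse inequality, I would use that $\alpha \leq \alpha_j \in \Delta(z_je)$ together with Lemma~\ref{lem:downwardclosed} to get, for each $j$, a family $\{g_{ji} \mid i \in \alpha\} \in \Gamma(z_je)$. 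Then $h_i := \sum_j g_{ji}$ satisfies $h_i \sim \sum_j z_je = e$ by additivity of equivalence, the $h_i$ are orthogonal, and $\sum_{i \in \alpha} h_i = \sum_{j} z_je = e$, so $\{h_i\}_{i \in \alpha} \in \Gamma(e)$ witnesses $\alpha \in \Delta(e)$. Combining, $\delta(e) = \alpha \in \Delta(e)$.

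The only mildly delicate point, and the place I expect to have to be careful, is the maximality argument that forces $\sum z_j = \cover{e}$: one must verify that Lemma~\ref{lem:cutcoversum}(b) applied to the residual $(\cover{e}-z)e$ produces a central projection orthogonal to all existing $z_j$'s, which follows from $\cover{(\cover{e}-z)e} = (\cover{e}-z)\cover{e} = \cover{e} - z$. Everything else is a bookkeeping exercise combining the previous lemmas with standard cardinal arithmetic.
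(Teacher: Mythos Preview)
Your proposal is correct and follows essentially the same route as the paper: Zorn's lemma applied to Lemma~\ref{lem:cutcoversum}(b) to exhaust $\cover{e}$ by central projections $z_j$, then Lemma~\ref{lem:cutcoversum}(a) and Lemma~\ref{lem:tau}(c) to read off $\delta(e)$. Your final paragraph constructing $\{h_i\}_{i\in\alpha}\in\Gamma(e)$ by hand is a bit more than necessary---once $\d(e)=\alpha^+$ is established, the identity $\Delta(e)=\{\beta\mid\beta<\d(e)\}$ (stated at the start of the appendix, using Lemma~\ref{lem:downwardclosed}) gives $\alpha\in\Delta(e)$ immediately---but the explicit construction is of course also fine, and is in any case just unpacking the proof of Lemma~\ref{lem:tau}(b).
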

\begin{proof}
  Applying Zorn's lemma to Lemma~\ref{lem:cutcoversum}(b) gives a maximal
  family $\{z_i\}$ of orthogonal nonzero central projections such that
  $z_i \leq \cover{e}$, and $z_ie = \sum_j e_{ij}$ for some
  infinite orthogonal set $\{e_{ij} \mid j \in \alpha_i\}$ of equivalent
  countably decomposable projections. 
  If $\sum z_i < \cover{e}$, then ($\cover{e} - \sum z_i)e$ is properly
  infinite, so the projection given by Lemma~\ref{lem:cutcoversum}(b)
  would violate maximality; therefore $\sum z_i = \cover{e}$.
  Lemma~\ref{lem:cutcoversum}(a) also implies
  that $z_i e$ is properly infinite and $\delta(z_i e) =
  \alpha_i$. Then $\delta(e) = \min\{\alpha_i\} \in \Delta(e)$ by Lemma~\ref{lem:tau}(c).
\end{proof}

\bibliographystyle{amsplain}
\bibliography{diagonal}

\end{document}